\def\disp{\displaystyle}     
\def\Limsup{\mathop{{\rm Lim}\,{\rm sup}}}    \def\tto{\;{\lower 1pt
\hbox{$\rightarrow$}}\kern -10pt \hbox{\raise 2pt \hbox{$\rightarrow$}}\;}  \def\Hat{\widehat}  \def\Bar{\overline} \def\ra{\rangle}
\def\la{\langle} \def\ve{\varepsilon} \def\B{\mathbb{B}} \def\h{\hfill\Box} \def\R{\mathbb{R}} \def\N{\mathbb{N}}  \def\ox{\bar{x}} \def\oy{\bar{y}} \def\ooy{\bar{\y}}
 \def\vt{\vartheta} \def\co{\mbox{\rm co}\,}
\def\gph{\operatorname{gph}} \def\epi{\mbox{\rm epi}\,}   \def\dom{\mbox{\rm
dom}\,}      
 \def\dist{\mbox{\rm dist}\,}    \def\inter{\mbox{\rm int}\,} 
\def\h{\hfill\square} \def\dn{\downarrow}   \def\ph{\varphi} \def\emp{\emptyset} \def\st{\stackrel} \def\oR{\Bar{\R}} \def\N{\mathbb{N}} 
 \def\gg{\gamma}  \def\dd{\delta} \def\al{\alpha}     
\newcounter{count}  
\DeclareMathOperator{\subm}{\partial} \DeclareMathOperator{\subf}{\Hat{\partial}}   
   \newcommand{\Intf}[1]{\mathrm{E}_{#1}} \newcommand{\Intfset}[1]{\mathrm{E}_{#1}}
 \newcommand{\der}[3]{ d{#1}(#2)(#3)}  \DeclareMathOperator*{\esssup}{ess\,sup} 
\DeclareMathOperator{\1}{\mathds{1}} \newcommand{\T}{T} \def\sce{\setcounter{equation}{0}} \newcommand{\Rex}{\overline{\mathbb{R}}} \renewcommand{\theequation}{{\thesection}.\arabic{equation}}
\let\epsilon\varepsilon \DeclareMathAlphabet{\mathpzc}{OT1}{pzc}{m}{it} 
\def\u{\mathpzc{u}} \def\v{\mathpzc{v}} \def\w{\mathpzc{w}} \def\x{\mathpzc{x}} \def\y{\mathpzc{y}} \def\z{\mathpzc{z}}
\def\X{{\R^n} }
\def\Y{\R^{m} }
\def\Z{\R^{q}}
\def\Leb{\textnormal{L} }
\begin{document}
\renewcommand{\theequation}{{\thesection}.\arabic{equation}}

\title{ Generalized Leibniz rules and Lipschitzian stability for expected-integral mappings\thanks{Research of the first author was partially supported by the USA National Science Foundation
under grants DMS-1512846 and DMS-1808978, by the USA Air Force Office of Scientific Research under grant \#15RT04, and by the Australian Research Council under Discovery Project DP-190100555.
Research of the second author was partially supported by ANID grant: Fondecyt Regular 1200283 and Fondecyt Regular 1190110.}} \author{Boris S. Mordukhovich \and \mbox{Pedro P\'erez-Aros}}
\institute{Boris S. Mordukhovich \at Department of Mathematics, Wayne State University, Detroit, Michigan 48202, USA\\ \email{boris@math.wayne.edu}\\ \and Pedro P\'erez-Aros \at Instituto de
Ciencias de la Ingenier\'ia, Universidad de O'Higgins, Rancagua, Chile\\ \email{pedro.perez@uoh.com}}

\date{}

\maketitle

\begin{abstract}   This paper is devoted to the study of the expected-integral multifunctions given in the form \begin{equation*} \Intfset{\Phi}(x):=\int_T\Phi_t(x)d\mu, \end{equation*} where
$\Phi\colon T\times\X\tto\Y$ is a set-valued mapping on a measure space $(T,\mathcal{A},\mu)$. Such multifunctions appear in applications to stochastic programming, which require developing
efficient calculus rules of generalized differentiation. Major calculus rules are developed in this paper for coderivatives of multifunctions $\Intfset{\Phi}$ and second-order subdifferentials of
the corresponding expected-integral functionals with applications to constraint systems arising in stochastic programming. The paper is self-contained with presenting in the preliminaries some
needed results on sequential first-order subdifferential calculus of expected-integral functionals taken from the first paper of this series.\vspace*{-0.05in} \keywords{  \and Stochastic
programming \and Generalized differentiation \and Integral multifunctions \and Leibniz rules  \and Lipschitzian stability}\vspace*{-0.05in}

\subclass{Primary: 49J53, 90C15, 90C34 \and Secondary: 49J52}\vspace*{-0.05in}
\end{abstract}

\section{Introduction}\label{intro}\sce\vspace*{-0.1in}

{\em Stochastic programming} has been highly recognized as an important area of optimization theory with a variety of practical applications; see, e.g., the book \cite{sdr} and the references
therein. Although advanced methods of variational analysis and generalized differentiation have been used in the study and applications of stochastic programming (see, e.g.,
\cite{ah,ap,burke,chp20,dent-rusz,hhp,hr,mp20,mor-sag18,mor-sag19} among other publications), there is no comparison between the number of currently achieved results in this vein for stochastic
problems and the broad implementation of the aforementioned methods in deterministic optimization.

The primal motivation for our study is to narrow the gap between deterministic and stochastic applications of {\em variational analysis} and {\em generalized differentiation} to optimization and related
problems. The underlying feature of stochastic problems is the presence of {\em integration} with respect to probability measures over extended-real-valued variable integrands as well as over
vector-valued and set-valued mappings. Then efficient {\em calculus rules} of their generalized differentiation are in strong demand.

In our preceding paper \cite{mp20a} we established various {\em sequential} versions of the generalized {\em Leibniz rule} (subdifferentiation under the integral sign) in terms of {\em regular}
subgradients of the {\em expected-integral functionals} that are defined by \begin{equation}\label{eif} \Intfset{\varphi}(x,\y):=\int_T\varphi_t\big(x,\y(t)\big)d\mu, \end{equation} where
$x\in\R^n$, $\y\in\Leb^1(T;\R^m)$, $\ph_t(x,y):=\ph(t,x,y)$, and $\ph\colon T\times\R^n\times\R^m\to\Rex:=[-\infty,\infty]$ is an extended-real-valued function on a {\em complete finite measure
space} $(T,\mathcal{A},\mu)$, which is assumed below.

The goal of this paper is to proceed much further and to study {\em expected-integral multifunctions} (set-valued mapping) given in the form \begin{equation}\label{def:set-valued:Exp}
\Intfset{\Phi}(x):=\int_{T}\Phi_t(x)d\mu, \end{equation} where $\Phi\colon T\times\X\tto\Y$ is a set-valued (in particular, single-valued $\Phi\colon T\times\X\to\Y$) mapping defined on a measure
space. The reader may consult the survey paper \cite{hess} for an overview on the integration of random set-valued mappings and set-valued probability theory. We also refer the reader to the
recent publications \cite{burke,dent-rusz} and the bibliographies therein for new developments and applications. Our current work presented in this paper is mostly theoretical, while we'll
discuss the aimed stochastic applications in the concluding section below.

A natural extension of subdifferentials to the case of (single-valued and set-valued) mappings is provided by {\em coderivatives}, and thus we focus here on deriving coderivative versions of
Leibniz's rule for expected-integral multifunctions of type \eqref{def:set-valued:Exp}. To the best of our knowledge, this has never been done in the literature. In contrast to \cite{mp20a}, our
major results are obtained in the {\em exact/pointwise} form, i.e., they are formulated exactly at the points in question via the {\em limiting} constructions. Such results are clearly more
convenient for the theory and applications than sequential/fuzzy ones and provide new rules even for extended-integral functionals \eqref{eif} in comparison with \cite{mp20a}.

An important role in deriving pointwise {\em coderivative Leibniz-type rules} is played by the new {\em integrable quasi-Lipschitz property} for set-valued random {\em normal integrands}
$\Phi_t(x)$ in \eqref{def:set-valued:Exp}. For deterministic multifunctions, the introduced property is equivalent to the (Aubin) {\em Lipschitz-like} property due to the (Mordukhovich) {\em
coderivative criterion} of variational analysis (see \cite{m93,m06,rw}), while these two Lipschitzian properties are essentially different in stochastic frameworks. It happens, in particular, for
spaces with nonatomic measures, where the integrable quasi-Lipschitz property occurs to be equivalent to the integrable counterpart of the (Hausdorff) {\em locally Lipschitzian} property for
random multifunctions. All of this leads us to pleasable conclusions about {\em Lipschitz stability} of random {\em feasible solution mappings} in problems of stochastic programming. They
include, in particular, efficient conditions on {\em quantitative continuity} of parametric sets of feasible solutions to stochastic programs with inequality constraints.

Along with the study of expected-integral multifunctions \eqref{def:set-valued:Exp} defined by arbitrary integrand mappings $\Phi_t(x)$, we consider {\em structural} multifunctions of the type
\begin{equation}\label{composition}
\Phi_t(x)=F\big(t,g_t(x)\big)\;\mbox{ for all }\;x\in U\;\mbox{ and a.e.\ }\;t\in T
\end{equation}
defined as compositions of set-valued mappings $F$ and single-valued mappings $g$ between finite-dimensional spaces. It is shown that such multifunctions \eqref{def:set-valued:Exp} exhibit integrable
Lipschitz stability in the case of convex outer mappings $F$ and smooth inner mappings $g$ in \eqref{composition}. Then we introduce a new class of {\em integrable amenable} set-valued compositions and
use them to establish {\em chain rules} of {\em coderivative integration} for the corresponding expected-integral multifunctions. The obtained results are specified for the case where set-valued
mappings $F$ in \eqref{composition} are given by inequality constraints, which is a typical case for feasible solutions mappings in constrained stochastic programming.

Finally, in this paper we establish, for the first time in the literature, Leibniz-type rules for {\em second-order subdifferentials} of expected-integral functionals given in the form
\begin{equation}\label{eif:expected}
\Intfset{\varphi}(x):=\int_T\varphi_t\big(x\big)d\mu,
\end{equation}
generated by extended-real-valued functions $\ph\colon T\times\X\to\Rex$. The obtained results are specified for the case where $\ph$ in \eqref{eif:expected} is given as a {\em maximum function}, which is
important for applications to problems of stochastic programming.\vspace*{0.03in}

The rest of the paper is organized as follows. Section~\ref{sec2} recalls and discusses the basic constructions of {\em generalized differentiation} in variational analysis that are systematically
employed in the subsequent material. In Section~\ref{sec2a} we briefly review some notions of measurability and integration for multifunctions and then present the {\em sequential Leibniz rules}
for subdifferentiation of expected-integral functionals \eqref{eif} taken from \cite{mp20} and used in what follows.

Section~\ref{sec3} is devoted to the study of {\em Lipschitzian properties} of random multifunctions and presents various characterizations of all the three properties mentioned above with establishing
relationships between them in general measure spaces as well as in spaces with purely atomic and nonatomic measures.

In Section~\ref{sec4} we obtain several results of the new type labeled as {\em coderivative Leibniz rules}, which evaluate both {\em regular} and {\em limiting coderivatives} of
expected-integral multifunctions \eqref{def:set-valued:Exp} via the integration of random integrands $\Phi_t$ therein. The results obtained include {\em sequential} Leibniz rules for regular
coderivatives and {\em pointwise} ones for the limiting coderivative construction. As a consequence of the coderivative Leibniz rules of the latter type and the aforementioned coderivative
characterizations of Lipschitzian properties, we establish efficient conditions for Lipschitz stability of the expected-integral multifunctions in terms of coderivatives of their integrands
$\Phi_t(x)$.

Section~\ref{sec5} addresses the setting where the integrand $\Phi_t(x)$ is given in the {\em composition} form \eqref{composition} defined by integrable amenable mappings. The obtained conditions for
Lipschitz stability and {\em composite Leibniz rules} are specified here for random sets of {\em feasible solutions} in constrained stochastic programming.

Considering in Section~\ref{sec6} expected  functionals \eqref{eif:expected}, we derive sequential and pointwise {\em second-order Leibniz rules} in terms of second-order subdifferentials of two types,
which have been well known in variational analysis while have never been used in the study of expected-integral functionals and applications to stochastic programming. The concluding Section~\ref{sec7}
summarizes the main achievements of this paper and discusses some directions of our future research and applications.\vspace*{0.03in}

In this paper we use the standard notation from variational analysis, generalized differentiation, and stochastic programming; see, e.g., \cite{m06,rw,sdr}. Recall that the extended real line is denoted
by $\Rex:=[-\infty,\infty]$ with the conventions that $(\pm\infty)\cdot 0=0\cdot(\pm\infty)=0$ and $\infty-\infty=-\infty+\infty=\infty$. The symbol $\mathbb{B}_r(x)$ stands for the closed ball centered
at $x$ with radius $r>0$, while the unit closed ball of the space in question is denoted simply by $\mathbb{B}$. Given a nonempty set $\Omega\subset\X$, its {\em indicator function} $\dd_\Omega$ is
defined by $\delta_\Omega(x):=0$ if $x\in\Omega$ and $\delta_\Omega(x):=\infty$ otherwise, while the {\em characteristic function} $\1_\Omega$ is defined by $\1_\Omega(x):=1$ for $x\in\Omega$ and
$\1_\Omega(x):=0$ for $x\notin\Omega$. The symbol $x\st{\Omega}{\to}\ox$ means that $x\to\ox$ with $x\in\Omega$, and $\Omega^c$ denotes the complement of $\Omega$. Finally, $\N:=\{1,2,\ldots\}$ and
$\R_+:=\{\al\in\R\;|\;\al> 0\}$.\vspace*{-0.2in}

\section{Preliminaries from Generalized Differentiation}\label{sec2}\sce\vspace*{-0.1in}

In this section we recall some basic constructions of {\em generalized differentiation} in variational analysis that are broadly used in what follows.

Let $\Phi\colon\X\tto\Y$ be a set-valued mapping with the {\em domain} and {\em graph} given by, respectively,
\begin{equation*}
\dom\Phi:=\big\{x\in\R^n\;\big|\;\Phi(x)\ne\emp\big\}\;\mbox{ and }\;\gph\Phi:=\big\{(x,y)\in\R^n\times\R^m\;\big|\;y\in\Phi(x)\big\}.
\end{equation*}
The {\em Painlev\'e-Kuratowski outer limit} of $F$ as $x\to\ox$ is defined by
\begin{equation}\label{pk}
\Limsup_{x\to\ox}\Phi(x):=\big\{v\in\R^m\big|\;\exists\,\mbox{ seqs. }\;x_k\to\ox,\;v_k\to v\;\mbox{ s.t. }\;v_k\in\Phi(x_k)\big\}.
\end{equation}
The {\em regular/Fr\'echet normal cone} to $\Omega$ at $\ox\in\Omega$ is
\begin{equation}\label{rnc}
\Hat N(\ox;\Omega):=\Big\{x^*\in\R^n\;\Big|\;\limsup_{x\st{\Omega}{\to}\ox}\frac{\la x^*,x-\ox\ra}{\|x-\ox\|}\le 0\Big\}
\end{equation}
with $\Hat N(\ox;\Omega):=\emp$ if $\ox\notin\Omega$. The {\em limiting/Mordukhovich normal cone} to $\Omega$ at $\ox$ is defined via \eqref{pk} by
\begin{equation}\label{lnc}
N(\ox;\Omega):=\Limsup_{x\to\ox}\Hat N(x;\Omega).
\end{equation}
Note that, in contrast to \eqref{rnc}, the limiting normal cone \eqref{lnc} and the associated coderivative and subdifferential constructions (see below) are {\em robust}, meaning that they are
closed-graph multifunctions with respect to perturbations of the initial points.

Based on the normal cones \eqref{rnc} and \eqref{lnc} to the graph of $\Phi\colon\X\tto\Y$ at $(\ox,\oy)\in\gph\Phi$, we define the corresponding {\em regular} and {\em basic/limiting
coderivatives} of $\Phi$ at $(\ox,\oy)$ for all $y^*\in\Y$ by, respectively, \begin{equation}\label{rcod} \Hat D^*\Phi(\ox,\oy)(y^*):=\big\{x^*\in\X\;\big|\;(x^*,-y^*)\in\Hat
N\big((\ox,\oy);\gph\Phi\big)\big\}, \end{equation} \begin{equation}\label{lcod} D^*\Phi(\ox,\oy)(y^*):=\big\{x^*\in\X\;\big|\;(x^*,-y^*)\in N\big((\ox,\oy);\gph\Phi\big)\big\}, \end{equation}
where we omit $\oy$ in the coderivative notation of $\Phi$ if it is a singleton $\{\Phi(\ox)\}$. If $\Phi\colon\X\to\Y$ is ${\cal C}^1$-smooth around $\ox$ (in fact, merely strictly
differentiable at this point), then both coderivatives above reduce to the {\em adjoint} (transpose) Jacobian matrix linearly applied to any $y^*\in\Y$: \begin{equation}\label{cod-smooth} \Hat
D^*\Phi(\ox)(y^*)=D^*\Phi(\ox)(y^*)=\big\{\nabla\Phi(\ox)^*y^*\big\}. \end{equation} In general, both coderivatives \eqref{rcod} and \eqref{lcod} are positively homogeneous multifunctions, where
\eqref{rcod} is always convex-valued, while \eqref{lcod} is not even for very simple convex functions as, e.g., for $\Phi(x):=|x|$ at $\ox=0\in\R$. Nevertheless, the limiting coderivative
\eqref{lcod}, together with the normal cone \eqref{lnc} and the associated first-order and second-order subdifferentials of extended-real-valued functions presented below, enjoy {\em full
pointwise calculus} based on {\em variational} and {\em extremal principles} of variational analysis; see the books \cite{m06,m18,rw} for more details and references. Unfortunately, it is not the
case for the corresponding regular constructions, for which only ``fuzzy" results are available. On the other hand, it is convenient to have the following representation:
\begin{equation}\label{Lim_repr_cod} D^*\Phi(\ox,\oy)(y^\ast)=\Limsup_{(x,y)\st{\gph\Phi}{\to}(\ox,\oy),\,v^*\to y^*}\Hat{D}^*\Phi(x,y)(v^*) \end{equation} of \eqref{lcod} at $(\ox,\oy)$ as the
outer limit of \eqref{rcod} at points nearby.

Next we consider an extended-real-valued function $\ph\colon\R^n\to\oR$ with its {\em domain} and {\em epigraph} that are defined, respectively, by
\begin{equation*}
\dom\ph:=\big\{x\in\R^n\;\big|\;\ph(x)<\infty\big\}\;\mbox{ and }\;\epi\ph:=\big\{(x,\al)\in\R^{n+1}\;\big|\;\al\ge\ph(x)\big\}.
\end{equation*}
The {\em properness} of $\ph$, which we assume from now on, means that $\dom\ph\ne\emp$ and $\ph(x)>-\infty$ for all $x\in\X$. Applying the normal cones \eqref{rnc} and \eqref{lnc} to the epigraph of
$\ph$ at $(\ox,\ph(\ox))$ with $|\ph(\ox)|\ne\infty$ gives us the corresponding (geometric) definitions of the {\em regular} and {\em limiting/basic subdifferentials}
\begin{equation}\label{rsub}
\Hat\partial\ph(\ox):=\big\{x^*\in\R^n\;\big|\;(x^*,-1)\in\Hat N\big((\ox,\ph(\ox));\epi\ph\big)\big\},
\end{equation}
\begin{equation}\label{lsub}
\partial\ph(\ox):=\big\{x^*\in\R^n\;\big|\;(x^*,-1)\in N\big((\ox,\ph(\ox));\epi\ph\big)\big\},
\end{equation}
while the reader is referred to \cite{m06,m18,rw} for equivalent analytic descriptions, various properties, and applications. We say that the function $\ph$ is {\em lower regular} at $\ox$ if $\ph$ is
finite and $\subf\ph(\ox)=\subm\ph(\ox)$.

In the finite-dimensional setting under consideration, the regular subdifferential \eqref{rsub} admits the following useful representation:
\begin{equation}\label{rep_reg_sub}
\Hat\partial\ph(\ox)=\big\{x^\ast\in\X\;\big|\;\langle x^\ast,w\rangle\le\der{\ph}{x}{w}\;\text{ for all }\;w\in\X\big\}
\end{equation}
in terms of the (Dini-Hadamard) {\em subderivative} of $\ph$ at $\ox$ with respect to the direction $w$ defined by
\begin{equation}\label{subderivative}
\der{\ph}{\ox}{w}=\liminf\limits_{t\dn 0,\,u\to w}\frac{\ph(\ox+tu)-\ph(x)}{t}.
\end{equation}

Finally in this section, we recall two notions of {\em second-order subdifferentials} of extended-real-valued functions that are obtained by the scheme of \cite{m92} as coderivatives of subgradient
mappings; see \cite{m06,m18} for more details. Given $\ph\colon\X\to\Rex$ finite at $\ox$ and $\ox^\ast\in\partial\ph(\ox)$, the {\em basic second-order subdifferential} of $\ph$ at $\ox$ relative to
$\ox^\ast$ is defined as the set-valued mapping $\partial^2\ph(\ox,\ox^\ast)\colon\X\tto\X$ with the values
\begin{equation}\label{2nd}
\partial^2\ph(\ox,\ox^\ast)(v^\ast):=\big(D^\ast\partial\ph\big)(\ox,\ox^\ast)(v^\ast)\;\mbox{ whenever }\;v^\ast\in\X.
\end{equation}
The {\em combined second-order subdifferential} of $\ph$ at $\ox$ with respect to $\ox^\ast$ is defined similarly to \eqref{2nd} by replacing the basic coderivative \eqref{lcod} in \eqref{2nd} with the
regular one \eqref{rcod} as
\begin{equation}\label{2nd1}
\breve{\partial}^2\ph(\ox,\ox^\ast)(v^\ast):=\big(\Hat D^\ast\partial\ph\big)(\ox,\ox^\ast)(v^\ast)\;\mbox{ for all }\;v^\ast\in\X.
\end{equation}
The indication of $\ox^*$ is dropped in the notation of \eqref{2nd} and \eqref{2nd1} when $\partial\ph(\ox)=\{\nabla\ph(\ox)\}$. Note that for ${\cal C}^2$-smooth functions $\ph$ we have
\begin{equation*}
\partial^2\ph(\ox)(v^\ast)=\breve{\partial}^2\ph(\ox)(v^\ast)=\big\{\nabla^2\ph(\ox)v^*\big\}\;\mbox{ for any }\;v^*\in\X
\end{equation*}
via the (symmetric) Hessian matrix. Calculus rules for the second-order subdifferentials and their computations for remarkable classes of functions can be found in \cite{m06,m18,mr} and the references
therein.\vspace*{-0.2in}

\section{Subdifferentiation of Expected-Integral Functionals}\label{sec2a}\sce\vspace*{-0.1in}

In this section we review the needed notions of measurability and integration for set-valued mappings and then present some results on sequential Leibniz rules for expected-integral functionals obtained
in \cite{mp20}.

Throughout the paper, $(T,\mathcal{A},\mu)$ is a complete finite measure space as mentioned in Section~\ref{intro}. To avoid confusions, we use the special font (as, e.g., $\v,\w,\x,\y,\z$, etc.) to
denote vector functions defined on $T$. When $p\in[1,\infty]$, the notation $\Leb^p({\T},\R^n)$ stands for the space of all the (equivalence classes by the relation equal almost everywhere) measurable
functions $\x$ such that the scalar function $\|\x(\cdot)\|^p$ is integrable for $p\in[1,\infty)$ and essentially bounded for $p=\infty$. The norm in $\Leb^p(T,\mathbb{R}^n)$ is denoted by $\|\cdot\|_p$,
and the points in $\X$ are identified with constant functions in $\Leb^p(T,\X)$. Thus for $x\in\X$ and $\x\in\Leb^p(T,\X)$ we have the expressions
\begin{align*}
\|x-\x\|_p &:=\left(\int_T\|x-\x(t)\|^p d\mu\right)^{1/p}\;\mbox{ as }\;p\in[1,\infty),\\
\|x-\x\|_\infty&:=\esssup\limits_{t\in T}\|x-\x(t)\|.
\end{align*}

Recall that a set-valued mapping $F\colon T\tto\R^n$ is {\em measurable} if for every open set $U\subset\mathbb{R}^n$ the inverse image $F^{-1}(U):=\{t\in T\;|\;F(t)\cap U\ne\emp\}$ is
measurable, i.e., $F^{-1}(U)\in\mathcal{A}$. The mapping $F$ is said to be {\em graph measurable} if $\gph F\in\mathcal{A}\otimes\mathcal{B}(\mathbb{R}^n)$, where $\mathcal{B}(\mathbb{R}^n)$ is
the Borel $\sigma$-algebra, i.e., the $\sigma$-algebra generated by open subsets of $\mathbb{R}^n$. It is easy to see, due to the completeness of the measure space $(T,\mathcal{A},\mu)$, that a
multifunction $F$ with closed values is measurable if and only if it is graph measurable. The {\em Aumann integral} of $F\colon T\tto\R^n$ over a measurable set $S\in\mathcal{A}$ is defined by
\begin{equation}\label{aum} \int_S F(t)d\mu:=\bigg\{\int_S\x^*(t)d\mu\;\bigg|\;\x^*\in{\Leb}^1(\T,\X)\textnormal{ and }\x^*(t)\in F(t)\text{ a.e.}\bigg\}. \end{equation} The fundamental {\em
Lyapunov convexity theorem} says that if the measure $\mu$ is {\em nonatomic} on $T$ (i.e., there is no $A\in{\cal A}$ such that $\mu(A)>0$ and for any $B\subset A$ with $B\in {\cal A}$ and
$\mu(B)<\mu(A)$ it follows that $\mu(B)=0$), then the integral set in \eqref{aum} is closed and convex in $\R^n$ provided that the multifunction $F$ is measurable and uniformly bounded by a
summable on $T$ function. The reader is referred to the book \cite{du} and its bibliography for the Lyapunov convexity theorem and its infinite-dimensional extensions; see also \cite{mor-sag18}
for the most recent results in this direction.

Together with \eqref{aum}, in this paper we often use the following notion for extended-real-valued functions of two variables. A function $\ph\colon T\times\mathbb{R}^n\to\Rex$ is called a {\em
normal integrand} if the multifunction $t\to\epi\ph_t$ is measurable with closed values. By the completeness of $(T,\mathcal{A},\mu)$, this is equivalent to saying that $\ph$ is

$\mathcal{A}\otimes\mathcal{B}(\mathbb{R}^n)$-measurable, and that for every $t\in T$ the function $\ph_t:=\ph(t,\cdot)$ is lower semicontinuous (l.s.c.); see, e.g., \cite[Corollary~14.34]{rw}.
The normal integrand $\ph$ is {\em proper} if the function $\ph_t$ is proper for each $t\in T$. If furthermore $\ph_t$ is a convex for all $t\in T$ that $\ph$ is called to be a {\em convex normal
integrand}.\vspace*{0.03in}

Motivated by the above definition of normal integrands for extended-real-valued functions, we introduce now its counterpart for set-valued mappings.\vspace*{-0.05in}
\begin{definition}[\bf set-valued normal integrands]\label{norm-integ} We say that a mapping $\Phi\colon T\times\X\tto\Y$ is a {\em set-valued normal integrand} on a measure space $(T,\mathcal{A},\mu)$
if for all $t\in T$ the multifunction $\Phi_t:=\Phi(t,\cdot)$ has closed graph and the graph of $\Phi$ belongs to $\mathcal{A}\otimes\mathcal{B}(\X\times\Y)$. If in addition the set $\gph\Phi_t$ is
convex for a.e.\ $t\in T$, then we say that $\Phi$ is a {\em set-valued convex normal integrand}.
\end{definition}\vspace*{-0.05in}

Based on our previous discussions and the completeness of the measure space $(T,\mathcal{A},\mu)$, we conclude that Definition~\ref{norm-integ} of set-valued normal integrands amounts to saying that
$t\mapsto\gph\Phi_t$ is a measurable multifunction with closed values.\vspace*{0.05in}

Next we present some results on measurable multifunctions and normal integrands that are broadly used in what follows. The first proposition concerns graph measurability of subgradient mappings generated
by extended-real-valued normal integrands as well as coderivatives associated with set-valued normal integrand mappings.\vspace*{-0.05in}
\begin{proposition}[\bf graph measurability of subgradient and coderivative mappings]\label{lemma_measurability_reg_sub} Let $\ph\colon T\times\X\to\Rex$ be a proper normal integrand, and $\Phi\colon
T\times\X\tto\Y$ be a proper set-valued normal integrand. Then the following multifunctions are graph measurable:\vspace*{-0.05in}
\begin{enumerate}[label=\alph*)]
\item[\bf(i)] $t\mapsto\gph\Hat\partial\ph_t=\big\{(x,x^\ast)\in\mathbb{R}^{2n}\big|\;x^\ast\in\Hat\partial\ph_t(x)\big\}$.
\item[\bf(ii)] $t\mapsto\gph\partial\ph_t:=\big\{(x,x^\ast)\in\mathbb{R}^{2n}\;\big|\;x^\ast\in\partial\ph_t(x)\big\}$.
\item[\bf(iii)] $t\mapsto\gph\Hat D^\ast\Phi_t:=\big\{(x,y,x^\ast,y^\ast)\in\mathbb{R}^{2(n+m)}\big|\;x^\ast\in\Hat D^\ast\Phi_t(x,y)(y^\ast)\big\}$.
\item[\bf(iv)] $t\mapsto\gph D^\ast\Phi_t:=\big\{(x,y,x^\ast,y^\ast)\in\mathbb{R}^{2(n+m)}\;\big|\;x^\ast\in D^\ast\Phi_t(x,y)(y^\ast)\big\}$.
\end{enumerate}
\end{proposition}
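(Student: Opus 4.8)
The plan is to reduce all four statements to a single geometric fact about \emph{normal-cone multifunctions} and then read off (i)--(iv) from the definitions \eqref{rsub}, \eqref{lsub}, \eqref{rcod}, \eqref{lcod}. Concretely, I would first prove the claim: \emph{if $t\mapsto C_t\subset\R^d$ is a measurable closed-valued multifunction, then both $t\mapsto\{(z,z^\ast)\mid z^\ast\in\Hat N(z;C_t)\}$ and $t\mapsto\{(z,z^\ast)\mid z^\ast\in N(z;C_t)\}$ are graph measurable.} Granting this, parts (i) and (iii) follow by applying it to the measurable closed-valued multifunctions $t\mapsto\epi\ph_t$ (with $d=n+1$) and $t\mapsto\gph\Phi_t$ (with $d=n+m$), whose measurability is exactly the normal-integrand hypotheses. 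For the subdifferential one extracts $\gph\Hat\partial\ph_t$ as the measurable preimage of the normal-cone graph under $(t,x,x^\ast)\mapsto\big(t,(x,\ph_t(x)),(x^\ast,-1)\big)$, which is measurable because $(t,x)\mapsto\ph_t(x)$ is $\mathcal A\otimes\mathcal B$-measurable; for the coderivative one uses the linear relabeling $(x^\ast,-y^\ast)\leftrightarrow z^\ast$. Parts (ii) and (iv) follow identically from the limiting half of the claim.

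For the regular half I would use a \emph{Castaing representation}: since $C_t$ is measurable and closed-valued on a complete finite measure space, there is a countable family of measurable selections $\{\xi_k\}_{k\in\N}$ with $C_t=\cl\{\xi_k(t)\mid k\in\N\}$ for a.e.\ $t$. The defining limsup in \eqref{rnc} can then be rewritten, for $z\in C_t$, as
\begin{equation*}
z^\ast\in\Hat N(z;C_t)\iff\forall\,\epsilon\in\Q\cap\R_+\ \exists\,\nu\in\N\ \forall\,k\in\N:\ \|\xi_k(t)-z\|<\tfrac1\nu\ \Rightarrow\ \la z^\ast,\xi_k(t)-z\ra\le\epsilon\|\xi_k(t)-z\|.
\end{equation*}
Here the passage from ``all nearby points of $C_t$'' to ``all nearby selections $\xi_k(t)$'' is justified by density of $\{\xi_k(t)\}$ in $C_t$ together with continuity of $z'\mapsto\la z^\ast,z'-z\ra-\epsilon\|z'-z\|$. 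Each event on the right is $\mathcal A\otimes\mathcal B$-measurable, since the maps $\|\xi_k(t)-z\|$ and $\la z^\ast,\xi_k(t)-z\ra$ are Carath\'eodory (measurable in $t$, continuous in $(z,z^\ast)$). Hence the countable Boolean combination $\bigcap_\epsilon\bigcup_\nu\bigcap_k$ is measurable, and intersecting with the measurable set $\{(t,z)\mid z\in C_t\}$ gives graph measurability of the regular normal-cone multifunction.

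For the limiting half I would invoke the \emph{robustness} of the limiting normal cone noted after \eqref{lnc}: since $\Hat N(\cdot;C_t)\subseteq N(\cdot;C_t)$, since $\gph N(\cdot;C_t)$ is closed, and since $N(\cdot;C_t)=\Limsup_{z'\to z}\Hat N(z';C_t)$, one gets the fiberwise identity $\gph N(\cdot;C_t)=\cl\,\gph\Hat N(\cdot;C_t)$. It therefore suffices to show that fiberwise closure preserves graph measurability: if $M\colon T\tto\R^{2d}$ is graph measurable, then $(t,w)\mapsto\dist(w,M(t))$ is Carath\'eodory (measurable in $t$ by the measurable-projection theorem, $1$-Lipschitz in $w$), hence jointly measurable, so $\gph(\cl M)=\{(t,w)\mid\dist(w,M(t))=0\}$ is measurable. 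Applying this to $M(t)=\gph\Hat N(\cdot;C_t)$ from the previous step yields graph measurability of $t\mapsto\gph N(\cdot;C_t)$, completing the claim and hence (ii), (iv).

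I expect the main obstacle to be the regular half of the claim, specifically verifying that the $\epsilon$--$\delta$ condition in \eqref{rnc} is faithfully captured by testing only against the countable dense set of Castaing selections; the remaining reductions (the measurable relabelings extracting the subdifferentials and coderivatives, and the closure step for the limiting constructions) are routine once the measurable-projection and Castaing tools are in place.
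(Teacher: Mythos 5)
Your proposal is correct, but it takes a genuinely different route from the paper's own proof, which is essentially a reduction plus citation: item (i) is quoted from the companion paper \cite[Theorem~3.2]{mp20}, item (ii) is stated to follow from (i) (or alternatively from \cite[Theorems~14.26 and 14.60]{rw}), and items (iii)--(iv) are obtained by applying (i)--(ii) to the normal integrand $(t,x,y)\mapsto\delta_{\gph\Phi_t}(x,y)$, i.e.\ coderivatives are reduced to subdifferentials via the identity $\Hat\partial\delta_\Omega(\cdot)=\Hat N(\cdot;\Omega)$, $\partial\delta_\Omega(\cdot)=N(\cdot;\Omega)$. You run that same identification in the opposite direction---both subdifferentials and coderivatives are reduced to normal cones of the measurable closed-valued maps $t\mapsto\epi\ph_t$ and $t\mapsto\gph\Phi_t$---and then, instead of citing anything, you prove the core normal-cone measurability claim from scratch: for the regular cone, the Castaing representation together with the countable $\forall\epsilon\,\exists\nu\,\forall k$ reformulation of \eqref{rnc} (your density-plus-continuity argument for replacing ``all nearby points of $C_t$'' by ``all nearby selections'' is exactly the right justification, and it handles isolated points correctly since the inequality is vacuous there); for the limiting cone, the fiberwise identity $\gph N(\cdot;C_t)=\cl\,\gph\Hat N(\cdot;C_t)$ (valid because $C_t$ is closed, so both sides are empty off $C_t$) combined with the distance-function/measurable-projection argument showing that fiberwise closure preserves graph measurability. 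Both halves are sound, and you correctly identify where completeness of $(T,\mathcal{A},\mu)$ enters (the projection theorem, and the equivalence of measurability and graph measurability needed to invoke Castaing's theorem); your closure step is precisely the content hidden behind the paper's phrase ``item (ii) follows from (i).'' What each approach buys: the paper's proof is two lines and defers the real work to previously established results, while yours is self-contained and in effect reproves the cited \cite[Theorem~3.2]{mp20} as a byproduct. In a final write-up you should only add the routine bookkeeping you left implicit: restrict to the measurable sets where $\ph_t(x)$ is finite (respectively where $C_t\ne\emptyset$) before applying the relabeling map $(t,x,x^\ast)\mapsto\big(t,(x,\ph_t(x)),(x^\ast,-1)\big)$ and the Castaing representation, noting that joint measurability of $\ph$ makes these sets and this map measurable.
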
\vspace*{-0.05in}
{\bf Proof}. Item (i) is proved in \cite[Theorem~3.2]{mp20}, item (ii) follows from (i), but it can be also derived from \cite[Theorems~14.26 and 14.60]{rw}. Finally, items (iii) and (iv) follow from (i)
and (ii), respectively, by their applications to the normal integrand $(t,x,y)\mapsto\delta_{\gph\Phi_t}(x,y)$. $\h$\vspace*{0.05in}

The following result is classical in the theory of measurable multifunctions. It asserts that a graph measurable multifunction with nonempty, while not necessarily closed, values admits a measurable
(single-valued) selection; see, e.g., the book \cite[Theorem~III.22]{cv} and its references.\vspace*{-0.05in}
\begin{proposition}[\bf measurable selections]\label{Prop_measurableselection} Let $F\colon T\tto\X$ be a graph measurable multifunction with nonempty values on a measurable space $(T,\mathcal{A},\mu)$.
Then there exists a measurable single-valued mapping $\x\colon T\to\X$ such that $\x(t)\in F(t)$ for a.e.\ $t\in T$.
\end{proposition}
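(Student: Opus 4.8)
The plan is to follow the von Neumann--Aumann route to measurable selection, whose engine is the \emph{measurable projection theorem} made available precisely by the completeness of $(T,\mathcal{A},\mu)$. The decisive preliminary fact is that for any $E\in\mathcal{A}\otimes\mathcal{B}(\X)$ the projection $\{t\in T\mid\exists\,x,\ (t,x)\in E\}$ belongs to $\mathcal{A}$: such a projection is an analytic (Suslin) subset of $T$, hence universally measurable, and by completeness of the measure it lies in $\mathcal{A}$ itself (this is the abstract measurable projection theorem of Sainte-Beuve for a complete finite measure space on the first factor and a Polish space on the second). First I would isolate this fact and apply it to the slices $\gph F\cap(T\times C)$, with $C$ ranging over a countable base of closed balls of $\X$, to conclude that each set $\{t\mid F(t)\cap C\ne\emp\}$ is measurable.

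Next I would run a \emph{successive approximation} in the spirit of Kuratowski--Ryll-Nardzewski. Fix a dense sequence $\{z_i\}\subset\X$ and write $B_i^{\,n}:=\B_{2^{-n}}(z_i)$. Since the values of $F$ are nonempty and $\{z_i\}$ is dense, for each $t$ some ball $B_i^{\,1}$ meets $F(t)$; using the measurability of $\{t\mid F(t)\cap B_i^{\,1}\ne\emp\}$ from the first step, I assign to each $t$ the least such index and set $\x_1(t):=z_i$, obtaining a measurable $\x_1$ with $\dist\big(\x_1(t),F(t)\big)<2^{-1}$. Iterating, at stage $n+1$ I refine inside the ball already selected, choosing measurably the least index $j$ with $F(t)\cap B_j^{\,n+1}\ne\emp$ and $\|z_j-\x_n(t)\|$ controlled; the routine bookkeeping yields measurable $\x_{n+1}$ with $\|\x_{n+1}(t)-\x_n(t)\|<2^{-n+1}$ and $\dist\big(\x_{n+1}(t),F(t)\big)<2^{-n-1}$. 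The sequence $(\x_n)$ is then uniformly Cauchy and converges to a measurable limit $\x$ with $\dist\big(\x(t),F(t)\big)=0$, i.e.\ $\x(t)\in\cl F(t)$ for every $t$.

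The hard part is that this limit selects only from the \emph{closure} $\cl F(t)$, whereas the sections $F(t)$ need not be closed, so the scheme above does not by itself land inside $F(t)$. To reach $F(t)$ proper I would invoke the full uniformization form of the von Neumann selection theorem for product-measurable sets with a complete measure on the first factor, which is exactly the content of \cite[Theorem~III.22]{cv}: the graph $\gph F\in\mathcal{A}\otimes\mathcal{B}(\X)$ with nonempty sections admits a uniformizing map measurable with respect to the $\mu$-completion of $\mathcal{A}$, and completeness of $(T,\mathcal{A},\mu)$ identifies this completion with $\mathcal{A}$ itself. The resulting $\x$ is therefore genuinely $\mathcal{A}$-measurable with $\x(t)\in F(t)$ for all $t$, which is in fact stronger than the asserted "a.e." conclusion. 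I would close by noting that when $F$ has closed values the approximation argument of the second paragraph already delivers the selection directly (and, iterated over a countable dense family of target perturbations, the attendant Castaing representation), so the only essential use of the deeper projection and uniformization machinery is to accommodate non-closed sections.
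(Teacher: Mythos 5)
The paper offers no proof of this proposition at all: it is stated as a classical result with a pointer to \cite[Theorem~III.22]{cv}, and your argument, at its decisive step (sections that need not be closed), invokes exactly that theorem, so you are following essentially the same citation-based route as the paper. Your preliminary machinery --- the measurable projection theorem and the Kuratowski--Ryll-Nardzewski successive approximation --- is correct but, as you yourself note, only yields a selection of $\cl F(t)$, so it cannot replace the appeal to \cite[Theorem~III.22]{cv}; keep in mind that as a purportedly self-contained proof this would be circular, since that theorem \emph{is} the proposition being proved.
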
\vspace*{-0.05in}

Finally in this section, we recall the notion of expected-integral functionals introduced in our preceding paper \cite{mp20} and present two sequential Leibniz-type rules in terms of regular subgradients
that were established therein. Let $\varphi\colon T\times\X\times\Y\to\Rex$ be a proper normal integrand such that there exists $\nu\in\Leb^1(T,\mathbb{R}_+)$ ensuring the {\em uniform boundedness from
below} condition
\begin{align}\label{lower_bound_assump}
\varphi_t(v,w)\ge-\nu(t)\;\text{ for all }\;v\in\X\;\;w\in\Y\text{ and all  } t\in T.
\end{align}
As defined in \eqref{eif}, the {\rm expected-integral} functional $\Intf{\varphi}\colon\X\times\Leb^1(\T,\Y)\to\Rex$ generated by a normal integrand $\ph$ is given by the formula
\begin{align*}
\Intf{\varphi}(x,\y):=\int_{\T}\varphi_t(x,\y(t))d\mu.
\end{align*}
It is well known in measure theory (see, e.g., \cite{bog}) that in a finite measure space $(T,\mathcal{A},\mu)$ there exist measurable disjoint sets $T_{pa}$ and $T_{na}$ such that $\mu_{pa}(\cdot):
=\mu(\cdot\cap T_{pa})$ is purely atomic and $\mu_{na}(\cdot):=\mu(\cdot\cap T_{na})$ is nonatomic. Moreover, $T_{pa}$ is a countable union of disjoint atoms.

In the following results we assume that for a given point of interest $\ox$ there exists $\rho>0$ such that
\begin{equation}\label{convex_cond}
\varphi_t(v,\cdot)\;\text{ is convex for all }\;v\in \mathbb{B}_\rho(\ox)\;\text{ and }\;t\in T_{na}.
\end{equation}

Now we are ready to present two {\em sequential subdifferential Leibniz rules} for expected-integral functionals taken, respectively, from Theorem~5.2 and Theorem~5.4 of our preceding paper
\cite{mp20}. Note that, although these results are not written in the expected Leibniz form, they certainly can be treated as {\em approximate} versions. Furthermore, such results will lead us to
the desired generalized Leibniz rules by employing limiting procedures under appropriate qualification conditions. \vspace*{-0.05in} \begin{theorem}[\bf subdifferential Leibniz rule,
I]\label{theoremsubdiferential} Let $\ph$ be a proper normal integrand on $(T,\mathcal{A},\mu)$ satisfying \eqref{lower_bound_assump} and \eqref{convex_cond} around some $\ox\in\X$. Pick any
$p,q\in(1,\infty)$ with $1/p+1/q=1$ and suppose that $(\ox^*,\ooy^\ast)\in\Hat{\partial}\Intf{\varphi}(\ox,\ooy)$ are such that the function \begin{equation*} t\mapsto\inf_{\X
\times\Y}\big\{\varphi_t(\cdot,\cdot)-\la\ooy^\ast(t),\cdot\ra\big\} \end{equation*} is integrable on $T$. Then there exist sequences $\{x_k\}\subset\X$, $\{\x_k\}\subset\Leb^p({T},\X)$,
$\{\x_k^*\}\subset{\Leb}^q({T},\X)$, $\{\y_k\}\subset\Leb^1(T,\Y)$, and $\{\y_k^\ast\}\subset \Leb^\infty(T,\Y)$ satisfying the assertions: \begin{enumerate}[label=\alph*),ref=\alph*)]
\item[\bf(i)] $\big(\x_k^*(t),\y_k^\ast(t)\big)\in\Hat\partial\varphi_t\big(\x_k(t),\y_k(t)\big)$ for a.e.\ and all $k\in\N$. \item[\bf(ii)] $\|\ox-x_k\|\to 0$, $\|\ox-\x_k\|_p\to 0$, and
$\|\ooy-\y_k\|_1\to 0$ as $k\to\infty$. \item[\bf(iii)] $\disp\bigg\|\int_T \x_k^*(t)d\mu-\ox^\ast\bigg\|\to 0$ and $\|\x_k^*\|_q\|\x_k-x_k\|_p\to 0$ as $k\to\infty$. \item[\bf(iv)]
$\disp\int_T\Big|\varphi_t\big(\x_k(t),\y_k(t)\big)-\varphi_t\big(\ox,\ooy(t)\big)\Big|d\mu\to 0$ as $k\to\infty$. \item[\bf(v)] $\|\y_k^\ast-\ooy^\ast\|_\infty\to 0$ as $k\to\infty$.
\end{enumerate} \end{theorem}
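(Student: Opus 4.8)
The plan is to derive this sequential rule from scratch by turning the regular-subgradient hypothesis into a near-minimization problem and then disintegrating the resulting approximate optimality condition into pointwise regular subgradients of the integrands $\varphi_t$. First I would invoke the analytic description \eqref{rep_reg_sub} of $\Hat\partial\Intf{\varphi}(\ox,\ooy)$: the inclusion $(\ox^*,\ooy^*)\in\Hat\partial\Intf{\varphi}(\ox,\ooy)$ means that the tilted functional $(x,\y)\mapsto\Intf{\varphi}(x,\y)-\la\ox^*,x\ra-\int_T\la\ooy^*(t),\y(t)\ra d\mu$ majorizes its value at $(\ox,\ooy)$ up to a term that is $\oo(\|x-\ox\|+\|\y-\ooy\|_1)$. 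The integrability hypothesis on $t\mapsto\inf_{\X\times\Y}\{\varphi_t(\cdot,\cdot)-\la\ooy^*(t),\cdot\ra\}$ guarantees that the $\ooy^*$-tilted integral part is bounded below by a summable function near the point in question, which is exactly what is needed to run a variational principle.

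Second, the asymmetry between conclusions (iii) and (v) dictates how the two variables are handled. The convexity assumption \eqref{convex_cond} holds only in the $\Y$-slot and only on $T_{na}$; it is what permits an exact pointwise characterization of the $\y$-subgradient, and hence the strong conclusion (v), $\|\y_k^*-\ooy^*\|_\infty\to0$. No convexity is available in the $x$-slot, so that direction can be treated only in a fuzzy manner. To capture this, I would decouple the constant $x$ into functions: embed $\Intf{\varphi}(x,\cdot)$ into the decoupled functional $(\x,\y)\mapsto\int_T\varphi_t(\x(t),\y(t))d\mu$ on $\Leb^p(T,\X)\times\Leb^1(T,\Y)$, whose restriction to constant $\x\equiv x$ returns $\Intf{\varphi}(x,\y)$. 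The discrepancy between differentiating with $x$ held constant and with $\x$ ranging over $\Leb^p$ is precisely the source of the \H-type cross-term $\|\x_k^*\|_q\|\x_k-x_k\|_p$ in (iii).

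Third, I would apply Ekeland's variational principle on the complete space $\X\times\Leb^p(T,\X)\times\Leb^1(T,\Y)$ to the tilted decoupled functional, with a perturbation of order $\epsilon_k\dn0$, producing approximate minimizers $(x_k,\x_k,\y_k)$ that satisfy the localization (ii) and at which the perturbed functional carries an arbitrarily small regular subgradient. At such a near-critical point the subgradient of an integral functional disintegrates pointwise: using the graph measurability of $t\mapsto\gph\Hat\partial\varphi_t$ from Proposition~\ref{lemma_measurability_reg_sub}(i) together with the measurable-selection Proposition~\ref{Prop_measurableselection}, I would select measurable fields $t\mapsto(\x_k^*(t),\y_k^*(t))\in\Hat\partial\varphi_t(\x_k(t),\y_k(t))$, which is assertion (i). On $T_{na}$ the convexity \eqref{convex_cond} lets me pin the $\y$-component to $\ooy^*$ up to $\epsilon_k$ uniformly, via the convex subdifferential of integral functionals and the Lyapunov convexity theorem, while on the countably many atoms of $T_{pa}$ integration reduces to evaluation, so the atom-wise first-order conditions again force the $\y$-component near $\ooy^*$; together these give (v).

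Finally, I would verify the integrated conclusions (iii) and (iv). Integrating the selected $\x_k^*$ and comparing with $\ox^*$ through the optimality condition yields $\|\int_T\x_k^*d\mu-\ox^*\|\to0$, while \H's inequality bounds the decoupling error by $\|\x_k^*\|_q\|\x_k-x_k\|_p$, which is driven to $0$ by choosing the $\Leb^p$-localization radius small relative to $\epsilon_k$. Assertion (iv), the $\Leb^1$-convergence of the integrand values, follows from \eqref{lower_bound_assump} and the integrability hypothesis by a Fatou/dominated-convergence argument. The main obstacle I anticipate is precisely this coordination: achieving the strong $\Leb^\infty$-convergence (v) in the convex $\y$-direction while only the weak integrated convergence (iii) is available in the nonconvex $x$-direction, and simultaneously forcing the cross-term $\|\x_k^*\|_q\|\x_k-x_k\|_p$ to vanish, requires a delicate balancing of the Ekeland scale against the localization radius together with a careful gluing of the exact nonatomic estimate on $T_{na}$ with the approximate atomic one on $T_{pa}$.
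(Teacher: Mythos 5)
A preliminary point: this paper does not prove Theorem~\ref{theoremsubdiferential} at all. It is stated in the preliminaries as a result quoted from the companion work (Theorem~5.2 of \cite{mp20}; cf.\ also \cite{mp20a}), where it is obtained from the extremal principles developed there; here it is only restated for later use. So there is no in-paper proof to compare yours against, and your proposal must stand on its own. Its architecture is sensible---you correctly identify that the constant $x$ must be decoupled into $\Leb^p$-functions (which is indeed the source of the cross-term in (iii)), that the integrability of $t\mapsto\inf_{\X\times\Y}\{\varphi_t(\cdot,\cdot)-\la\ooy^\ast(t),\cdot\ra\}$ is what allows a variational principle to run globally, and that $T_{na}$ and $T_{pa}$ require different treatments---but two of its steps have genuine gaps.

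The first gap is circularity at the decisive step. After Ekeland's principle you assert that ``at such a near-critical point the subgradient of an integral functional disintegrates pointwise,'' invoking Propositions~\ref{lemma_measurability_reg_sub} and \ref{Prop_measurableselection}. But converting one regular subgradient of an expected-integral functional into measurable pointwise regular subgradients of $\varphi_t$ that integrate back to approximately $\ox^\ast$ and stay $\Leb^\infty$-close to $\ooy^\ast$ is precisely the statement being proved; measurable selection only delivers \emph{some} selection of a nonempty-valued multifunction and carries none of this quantitative information. The non-circular route disintegrates the near-\emph{minimization} rather than the subgradient: the tilted and penalized functional is itself an expected-integral functional (the penalties $\int_T\|\y(t)-\y_k(t)\|d\mu$ and $\int_T\|\x(t)-\x_k(t)\|^p d\mu$ are integrals of pointwise penalties), so its minimization over the decomposable space $\Leb^p(T,\X)\times\Leb^1(T,\Y)$ interchanges with integration (see, e.g., \cite[Theorem~14.60]{rw}), and only afterwards does one apply finite-dimensional subdifferential calculus, measurably in $t$. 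The second gap is that the function-space calculus you invoke is unavailable where you need it: $\Leb^1(T,\Y)$ is not an Asplund space and its norm is not Fr\'echet smooth, so at an Ekeland point there is neither a smooth nor a fuzzy sum rule that extracts ``an arbitrarily small regular subgradient'' of the original functional in $\X\times\Leb^p(T,\X)\times\Leb^1(T,\Y)$. Finally, for (v), the default Ekeland/Br{\o}ndsted--Rockafellar estimates give square-root-type errors that are small only after integration, not in $\Leb^\infty$; to obtain $\|\y_k^\ast-\ooy^\ast\|_\infty\to 0$ one must run the pointwise principle with asymmetric parameters (dual error kept uniformly below $\delta$, primal error of order $\epsilon_t/\delta$ with $\int_T\epsilon_t\,d\mu\le\epsilon$), and handle the atoms---where \eqref{convex_cond} provides no convexity---by a separate finite-dimensional argument. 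You flag this ``delicate balancing'' as the anticipated obstacle, but it is exactly where \eqref{lower_bound_assump}, \eqref{convex_cond}, and the integrability hypothesis must do real work, and the sketch does not supply it.
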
\vspace*{-0.07in} \begin{theorem}[\bf subdifferential Leibniz rule, II]\label{theorem_main_fuzzy_sub}\hspace{-0.2cm} Let $\ph$ be a proper normal integrand on
$(T,\mathcal{A},\mu)$ satisfying \eqref{lower_bound_assump} and \eqref{convex_cond} at $\ox\in\X$, and let $(\ox^*,\ooy^\ast)\in\Hat{\partial}\Intf{\varphi}(\ox,\ooy)$ be chosen so that the
function \begin{equation*}\label{inf} t\mapsto\inf_{\mathbb{B}_{\hat\rho}(\ox)\times\Y}\big\{\varphi_t(\cdot,\cdot)-\la\ooy^\ast(t),\cdot\ra\big\} \end{equation*} is integrable on $T$ for some
$\hat{\rho}>0$. Then there exist sequences $\{x_k\}\subset\X$, $\{\x_k\}\subset\Leb^\infty({T},\X)$, $\{\x_k^\ast\}\subset{\Leb}^1({T},\X)$, $\{\y_k\}\subset\Leb^1(T,\Y)$, and
$\{\y_k^\ast\}\subset\Leb^\infty(T,\Y)$ such that: \begin{enumerate}[label=\alph*),ref=\alph*)] \item[\bf(i)] $\big(\x_k^*(t),\y_k^\ast(t)\big)\in\Hat\partial\varphi_t\big(\x_k(t),\y_k(t)\big)$
for a.e.\ and all $k\in\N$. \item[\bf(ii)] $\|\ox-x_k\|\to 0$, $\|\ox-\x_k\|_\infty\to 0$, and $\|\ooy-\y_k\|_1\to 0$ as $k\to\infty$. \item[\bf(iii)]
$\disp\bigg\|\int_T\x_k^*(t)d\mu-\ox^\ast\bigg\|\to 0$, and $\disp\int_T\Big\|\x_k^*(t)\Big\|\cdot\Big\|\x_k(t)-x_k\Big\|d\mu\to 0$ as $k\to\infty$. \item[\bf(iv)]
$\disp\int_T\Big|\varphi_t\big(\x_k(t),\y_k(t)\big)-\varphi_t(\ox,\ooy(t)\big)\Big|d\mu\to 0$ as $k\to\infty$. \item[\bf(v)] $\|\y_k^\ast-\ooy^\ast \|_\infty\to 0$ as $k\to\infty$.
\end{enumerate} \end{theorem}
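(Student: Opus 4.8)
The plan is to disintegrate the tilt $(\ox^\ast,\ooy^\ast)$ into pointwise regular subgradients of the integrands $\varphi_t$, the essential device being to lift the constant first variable $\ox$ into function-valued perturbations $\x\in\Leb^\infty(T,\X)$. I would begin from the variational description \eqref{rep_reg_sub}, which recasts $(\ox^\ast,\ooy^\ast)\in\Hat\partial\Intf{\varphi}(\ox,\ooy)$ as the subderivative inequality
\[
\la\ox^\ast,w\ra+\int_T\la\ooy^\ast(t),\z(t)\ra\,d\mu\le d\Intf{\varphi}(\ox,\ooy)(w,\z)\qquad\text{for all}\quad(w,\z)\in\X\times\Leb^1(T,\Y),
\]
exhibiting $(\ox,\ooy)$ as a first-order optimal (tilted) point of $\Intf{\varphi}$. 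The uniform lower bound \eqref{lower_bound_assump} and the localized integrability of $t\mapsto\inf_{\mathbb{B}_{\hat\rho}(\ox)\times\Y}\{\varphi_t(\cdot,\cdot)-\la\ooy^\ast(t),\cdot\ra\}$ guarantee that the corresponding lifted tilted functional on $\Leb^\infty(T,\X)\times\Leb^1(T,\Y)$ is bounded below near the constant pair $(\ox,\ooy)$; the restriction to $\mathbb{B}_{\hat\rho}(\ox)$ is precisely what makes this boundedness available in the $\Leb^\infty$-topology of the first variable, and this is the only place the hypotheses are used in localized rather than global form.

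Second, I would apply Ekeland's variational principle in the complete space $\Leb^\infty(T,\X)\times\Leb^1(T,\Y)$ to produce, for each $k\in\N$, exact minimizers $(\x_k,\y_k)$ of the tilted functional perturbed by a term of size $\ve_k\dn 0$. By construction one has $\|\ox-\x_k\|_\infty\to 0$ and $\|\ooy-\y_k\|_1\to 0$; taking the constant reference points $x_k\in\X$ as the centers of the shrinking $\Leb^\infty$-balls yields $\|\ox-x_k\|\to 0$ together with $\|\x_k-x_k\|_\infty\to 0$, which gives (ii) and feeds the remainder estimate in (iii). Continuity of the value along this construction, secured by \eqref{lower_bound_assump}, delivers the convergence of integrated values in (iv). It is exactly the passage to $\Leb^\infty$-perturbations of the originally constant variable $\ox$ that upgrades the finite-dimensional direction $w$ into the function $\x_k$ of the conclusion.

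Third, at each exact minimizer I would invoke the interchange of minimization and integration for normal integrands (a Rockafellar--Ioffe type result), reducing minimization of the integral functional to pointwise minimization of $\varphi_t(\cdot,\cdot)-\la\ooy^\ast(t),\cdot\ra$ augmented by the Ekeland penalty. Writing the first-order optimality condition for the pointwise problem and applying the measurable selection Proposition~\ref{Prop_measurableselection} to the graph-measurable multifunction $t\mapsto\gph\Hat\partial\varphi_t$ from Proposition~\ref{lemma_measurability_reg_sub}(i), I would extract measurable selections $(\x_k^\ast(t),\y_k^\ast(t))\in\Hat\partial\varphi_t(\x_k(t),\y_k(t))$ for a.e.\ $t\in T$, which is (i). The Ekeland bound on the penalty then forces $\big\|\int_T\x_k^\ast\,d\mu-\ox^\ast\big\|\to 0$, $\|\y_k^\ast-\ooy^\ast\|_\infty\to 0$, and the integral remainder bound $\int_T\|\x_k^\ast(t)\|\cdot\|\x_k(t)-x_k\|\,d\mu\to 0$, i.e.\ (iii) and (v); the $\Leb^1$-integral form of this remainder, in place of the product $\|\x_k^\ast\|_q\|\x_k-x_k\|_p$ of Theorem~\ref{theoremsubdiferential}, is dictated by the $\Leb^\infty/\Leb^1$ duality used here.

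The hard part will be the interchange step together with the nonatomic/atomic splitting $T=T_{na}\cup T_{pa}$. On the purely atomic part the minimization and subdifferentiation reduce to a countable family of finite-dimensional problems and cause no difficulty. On the nonatomic part, however, the naive interchange of the subderivative (a $\liminf$ of difference quotients) with the integral sign runs the wrong way under Fatou's lemma, and this is exactly where the convexity hypothesis \eqref{convex_cond}, i.e.\ convexity of $\varphi_t(v,\cdot)$ in the integrated variable for $t\in T_{na}$, becomes indispensable: for convex integrands the difference quotients are monotone in the step parameter $\tau\dn 0$, so the directional derivative is an honest decreasing limit that may be passed through the integral, while Lyapunov's convexity theorem convexifies the integrated subgradient sets and lets the averaged inequality be realized by genuine pointwise selections without loss. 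Controlling these two effects at once, and matching them to the Ekeland penalty so that the sharp estimates in (iii) emerge, is the technical crux of the argument, which follows the reasoning of \cite{mp20}.
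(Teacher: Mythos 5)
First, a point of order: this paper never proves Theorem~\ref{theorem_main_fuzzy_sub}. It is quoted, together with Theorem~\ref{theoremsubdiferential}, from Theorems~5.2 and~5.4 of the companion work \cite{mp20} (cf.\ also \cite{mp20a}), so there is no in-paper proof to compare yours against; I can only judge your proposal on its own terms. Your toolbox --- a variational principle, interchange of minimization and integration, the measurability facts of Propositions~\ref{lemma_measurability_reg_sub} and~\ref{Prop_measurableselection}, and Lyapunov-type convexification on $T_{na}$ under \eqref{convex_cond} --- is the right family of instruments, but the argument has a genuine gap at the one step that carries the theorem.

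The gap is your second step, the ``lift'' of the constant first variable to $\Leb^\infty(T,\X)$. The hypothesis $(\ox^*,\ooy^\ast)\in\Hat{\partial}\Intf{\varphi}(\ox,\ooy)$ controls variations of $\Intf{\varphi}$ only along pairs whose first component is a \emph{constant} vector of $\X$; it says nothing about $\int_T\varphi_t(\x(t),\y(t))d\mu$ for nonconstant $\x$ near $\ox$. To tilt the lifted functional in the $\x$-variable you must choose a density $\x^*_0\in\Leb^1(T,\X)$ with $\int_T\x^*_0\,d\mu=\ox^*$, and for no a priori choice of $\x^*_0$ is $(\ox,\ooy)$ an asymptotic minimizer of the lifted tilted functional: boundedness below (which is all that \eqref{lower_bound_assump} and the integrability hypothesis give) is not approximate minimality, and Ekeland's principle started at a point that is not an $\ve$-minimizer yields neither localization near $(\ox,\ooy)$ nor vanishing penalties. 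Concretely, take $\ooy^\ast\equiv 0$ and $\varphi_t(x,y):=\max\{\la a(t),x-\ox\ra,-1\}$ with $a\in\Leb^1(T,\X)$ nonconstant and $\int_T a\,d\mu=\ox^\ast$. All hypotheses of the theorem hold and $(\ox^\ast,0)\in\Hat{\partial}\Intf{\varphi}(\ox,\ooy)$, but for any fixed density $\x^*_0\ne a$ and all small $\epsilon>0$ the lifted tilted functional drops below its value at the constant $\ox$ by at least $\frac{\epsilon}{2}\int_T\|a(t)-\x^*_0(t)\|d\mu$ on the $\Leb^\infty$-ball of radius $\epsilon$ (test with $\x(t)=\ox-\epsilon\big(a(t)-\x^*_0(t)\big)/\|a(t)-\x^*_0(t)\|$); the deficit is of \emph{first order} in the radius, which is exactly what prevents the Ekeland penalty constants from vanishing at the scale needed for (iii). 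The only good tilt is $\x^*_0=a$, i.e.\ precisely the selection $\x^*_k$ that the theorem is supposed to produce, so your scheme assumes what it must prove. The same wall blocks the alternative reading of your third step: the Rockafellar--Ioffe interchange rule applies to the decomposable variable $\y\in\Leb^1(T,\Y)$ but fails for the constant variable $x$, and the theorem is in essence a subdifferential substitute for that failure. This is also why the conclusion involves two coupled sequences, constants $x_k$ and functions $\x_k$, tied by the weighted remainder $\int_T\|\x^*_k(t)\|\cdot\|\x_k(t)-x_k\|d\mu\to 0$, rather than a clean statement at a single lifted minimizer: producing that remainder is the real content, and in \cite{mp20} it comes out of the extremal-principle machinery developed there (combined with measurable selections and the Lyapunov-type use of \eqref{convex_cond} on $T_{na}$ that you correctly anticipate), not from a single application of Ekeland's principle in a lifted space.
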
\vspace*{-0.25in}

\section{Lipschitzian Properties of Random Multifunctions}\label{sec3}\sce\vspace*{-0.1in}

This section is devoted to the study of new Lipschitzian properties for random multifunctions defined on finite measure spaces. Our main attention is paid to the three major properties of such
multifunctions that we label as {\em integrably local Lipschitzian}, {\em integrably quasi-Lipschitzian}, and {\em integrably Lipschitz-like} ones. We reveal relationships between these
properties in various classes of measure spaces and compare them with the corresponding properties of deterministic multifunctions. On the one hand, the new Lipschitzian properties are important
to establish {\em stability} of random sets of feasible solutions in stochastic programming, but on the other hand they play a crucial role in deriving {\em calculus rules} to evaluate
coderivatives of expected-integral multifunctions that is given in the subsequent sections.\vspace*{0.03in}

Starting with the local Lipschitzian property of deterministic multifunctions, recall that $\Phi\colon\X\tto\Y$ is (Hausdorff) {\em locally Lipschitzian} around $\ox\in\dom\Phi$ if there are numbers
$\eta>0$ and $\ell\ge 0$ such that
\begin{equation}\label{loc-lip}
\Phi(x)\subset\Phi(x')+\ell\|x-x'\|\mathbb{B}\;\text{ for all }\;x,x'\in\mathbb{B}_\eta(\ox).
\end{equation}

Having \eqref{loc-lip} in mind, we now define its random version for set-valued normal integrands on the general measure spaces under consideration.\vspace*{-0.05in} \begin{definition}[\bf
integrable local Lipschitzian property of random multifunctions]\label{integ-lip} Let $\Phi\colon T\times\X\tto\Y$ be a set-valued normal integrand on a complete finite measure space
$(T,\mathcal{A},\mu)$, and let $\ox\in\dom\Intfset{\Phi}$ be given  for $E_\Phi$ taken from \eqref{def:set-valued:Exp}. We say that $\Phi$ is {\em integrably locally Lipschitzian} around $\ox$ if
there exist $\eta>0$, $\ell\in\Leb^1(T,\R_+)$, and $\Hat T\in\mathcal{A}$ with $\mu(T\backslash\Hat T)=0$ such that \begin{equation}\label{eq_definition_Int_loc}
\Phi_t(x)\subset\Phi_t(x')+\ell(t)\|x-x'\|\mathbb{B}\;\text{ for  all }\;t\in\Hat T\;\text{ and }\;x,x'\in\mathbb{B}_\eta(\ox). \end{equation} \end{definition}

Let us present useful characterizations of integrably locally Lipschitzian multifunctions in terms of distance functions. Recall that the {\em Pompeiu-Hausdorff distance} between sets $\Omega_1,
\Omega_2\subset\X$ is given by \begin{equation}\label{def:PH:Dist} \begin{aligned} {\rm haus}(\Omega_1,\Omega_2)&:=\max\left\{\sup\limits_{x\in\Omega_1}{\rm
dist}(x;\Omega_2),\sup\limits_{x\in\Omega_2}{\rm dist}(x;\Omega_1)\right\}\\ &=\sup\limits_{x\in\X}\big|{\rm dist}(x;\Omega_1)-{\rm dist}(x;\Omega_2)\big|, \end{aligned} \end{equation} where
${\rm dist}(x;\Omega)$ is the standard distance function in $\X$ with the convention that $\dist(x,\emptyset):=\infty$. Here are the equivalent descriptions of the integrable local Lipschitzian
property from \eqref{eq_definition_Int_loc}.\vspace*{-0.05in} \begin{proposition}[\bf distance descriptions of integrable local Lipschitzian multifunctions]\label{prop:dist:car} Let $\Phi\colon
T\times\X\tto\Y$ be a set-valued normal integrand, and let $\ox\in\dom\Intfset{\Phi}$ for $\Intfset{\Phi}$ taken from \eqref{def:set-valued:Exp}. Then the following assertions are equivalent:
\begin{enumerate}[label=\alph*),ref=\alph*)]\vspace*{-0.05in} \item[\bf(i)] There exist $\Hat T\in\mathcal{A}$ with $\mu(T\backslash\Hat T)=0$, $\eta>0$, and $\ell\in\Leb^1(T,\R_+)$ such that for
all $t\in\Hat T$ and all $x\in\mathbb{B}_\eta$ we have the inclusion in \eqref{eq_definition_Int_loc}. \item[\bf(ii)] There exist $\Hat T\in\mathcal{A}$ with $\mu(T\backslash\Hat T)=0$, $\eta>0$,
and $\ell\in\Leb^1(T,\R_+)$ such that \begin{align}\label{Hausd:Car} {\rm haus}\big(\Phi_t(x),\Phi_t(x')\big)\le\ell(t)\|x-x'\|\;\text{ for all }\;x,x'\in\mathbb{B}_\eta(\ox),\text{ }t\in\Hat T.
\end{align} \item[\bf(iii)] In the setting of {\rm(i)} we have that the function $x\mapsto{\rm dist}(y;\Phi_t(x))$ is $\ell(t)$-Lipschitz continuous on $\mathbb{B}_\eta(\ox)$. \item[\bf(iv)] In
the setting {\rm(ii)} we have the estimate \begin{equation}\label{equivdistance} {\rm dist}\big(y;\Phi_t(x)\big)\le\ell(t){\rm dist}\big(x;\Phi_t^{-1}(y)\cap\mathbb{B}_\eta(\ox)\big)\;\text{ for
all }\;x\in\mathbb{B}_\eta(\ox),\quad y\in\Y,\;\text{ and  }\;t\in\Hat T. \end{equation} \end{enumerate} \end{proposition}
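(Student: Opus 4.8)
The plan is to treat the four equivalences \emph{pointwise} in $t$: for each fixed $t\in\Hat T$, all four assertions are classical restatements of the (Hausdorff) local Lipschitzian property of the deterministic multifunction $\Phi_t$ on $\mathbb{B}_\eta(\ox)$, so the whole proposition follows once we check that they transfer with the \emph{same} data $\Hat T$, $\eta$, and $\ell$. Since $\ell\in\Leb^1(T,\R_+)$ is never altered in the arguments below, its integrability is automatically inherited and the measure-theoretic structure plays no role beyond fixing the full-measure set $\Hat T$. I would organize the loop as $(\mathrm{i})\Leftrightarrow(\mathrm{ii})\Leftrightarrow(\mathrm{iii})$ and then $(\mathrm{iii})\Rightarrow(\mathrm{iv})\Rightarrow(\mathrm{i})$, recording throughout that $\Phi_t$ has closed values (a consequence of the closed-graph requirement in Definition~\ref{norm-integ}).

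For $(\mathrm{i})\Leftrightarrow(\mathrm{ii})$ I would use the excess $e(A,B):=\sup_{a\in A}\dist(a;B)$ together with the elementary identity $A\subset B+r\mathbb{B}\iff e(A,B)\le r$, valid because $B=\Phi_t(x')$ is closed. The inclusion in \eqref{eq_definition_Int_loc}, read for the ordered pair $(x,x')$ and simultaneously for $(x',x)$ with the same modulus $\ell(t)\|x-x'\|$, bounds both excesses, hence their maximum, which is exactly $\mathrm{haus}(\Phi_t(x),\Phi_t(x'))$ by the first line of \eqref{def:PH:Dist}; conversely $\mathrm{haus}\ge e$ recovers the single inclusion. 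For $(\mathrm{ii})\Leftrightarrow(\mathrm{iii})$ I would invoke the second equality in \eqref{def:PH:Dist}, namely $\mathrm{haus}(\Phi_t(x),\Phi_t(x'))=\sup_{y\in\Y}|\dist(y;\Phi_t(x))-\dist(y;\Phi_t(x'))|$: the estimate \eqref{Hausd:Car} holding for all $x,x'\in\mathbb{B}_\eta(\ox)$ is then \emph{verbatim} the statement that each function $x\mapsto\dist(y;\Phi_t(x))$ is $\ell(t)$-Lipschitz on $\mathbb{B}_\eta(\ox)$.

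The only genuinely new computation is the passage to the inverse image. For $(\mathrm{iii})\Rightarrow(\mathrm{iv})$, fix $x\in\mathbb{B}_\eta(\ox)$ and $y\in\Y$; for any $x'\in\Phi_t^{-1}(y)\cap\mathbb{B}_\eta(\ox)$ we have $\dist(y;\Phi_t(x'))=0$, so $(\mathrm{iii})$ gives $\dist(y;\Phi_t(x))\le\ell(t)\|x-x'\|$, and taking the infimum over such $x'$ yields \eqref{equivdistance}. For $(\mathrm{iv})\Rightarrow(\mathrm{i})$, fix $x,x'\in\mathbb{B}_\eta(\ox)$ and $y\in\Phi_t(x)$; since then $x\in\Phi_t^{-1}(y)\cap\mathbb{B}_\eta(\ox)$, applying \eqref{equivdistance} at the point $x'$ gives $\dist(y;\Phi_t(x'))\le\ell(t)\dist(x';\Phi_t^{-1}(y)\cap\mathbb{B}_\eta(\ox))\le\ell(t)\|x-x'\|$, and letting $y$ range over $\Phi_t(x)$ reproduces the inclusion \eqref{eq_definition_Int_loc} (again using closedness of $\Phi_t(x')$).

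The main point requiring care is the treatment of \emph{empty values}, where the convention $\dist(\cdot;\emptyset)=\infty$ must be honoured: if $\Phi_t^{-1}(y)\cap\mathbb{B}_\eta(\ox)=\emptyset$ then the right-hand side of \eqref{equivdistance} is $+\infty$ and the inequality is vacuous, whereas in $(\mathrm{iv})\Rightarrow(\mathrm{i})$ that set is nonempty precisely because $x$ serves as a competitor. I expect this bookkeeping, together with the clean identification of the excess-versus-inclusion equivalence and the two forms of $\mathrm{haus}$ in \eqref{def:PH:Dist}, to be the entire content; no approximation or measurable-selection machinery is needed, since each implication is carried out for a fixed $t\in\Hat T$ and preserves the data $(\Hat T,\eta,\ell)$.
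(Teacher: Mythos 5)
Your proof is correct, and it follows the paper's route only partway. The handling of (i)$\Leftrightarrow$(ii)$\Leftrightarrow$(iii) via the two representations of the Pompeiu--Hausdorff distance in \eqref{def:PH:Dist} is essentially identical to the paper's argument (the paper simply asserts these equivalences; you spell out the excess identity $A\subset B+r\mathbb{B}\iff\sup_{a\in A}\dist(a;B)\le r$, which is the right justification). Where you genuinely diverge is assertion (iv): the paper first argues that one may assume $\mathbb{B}_\eta(\ox)\subset\dom\Phi_t$ for all $t\in\Hat T$ (using $\ox\in\dom\Intfset{\Phi}$ and finiteness of the Hausdorff distance under (ii)), and then obtains (ii)$\Leftrightarrow$(iv) by citing \cite[Proposition~3C.1]{dr}, whereas you close the cycle (iii)$\Rightarrow$(iv)$\Rightarrow$(i) with a direct two-line computation --- the triangle-type estimate $\dist(y;\Phi_t(x))\le\ell(t)\|x-x'\|$ for $x'\in\Phi_t^{-1}(y)\cap\mathbb{B}_\eta(\ox)$ in one direction, and taking $x\in\Phi_t^{-1}(y)\cap\mathbb{B}_\eta(\ox)$ as the competitor in the other, together with closedness of the values of $\Phi_t$ (which does follow from the closed-graph requirement in Definition~\ref{norm-integ}). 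Your version buys self-containedness: no appeal to an external implicit-function-theory result, and the empty-value bookkeeping via $\dist(\cdot;\emptyset)=\infty$ replaces the paper's preliminary domain argument cleanly (note that \eqref{eq_definition_Int_loc}, imposed for all ordered pairs, already forces the values to be nonempty throughout the ball once $\ox\in\dom\Phi_t$ a.e.). The paper's version is shorter on the page and records explicitly the domain inclusion $\mathbb{B}_\eta(\ox)\subset\dom\Phi_t$, a fact it reuses implicitly in later sections; both are complete proofs.
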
 {\bf Proof}. Note that assertion (i) means that $\Phi$ is integrably
locally Lipschitzian around $\ox$ by Definition~\ref{integ-lip}. The equivalence between (i) and (ii) easily follows from \eqref{eq_definition_Int_loc} and definition \eqref{def:PH:Dist} of the
Pompeiu-Hausdorff distance. The representation of the latter distance presented in \eqref{def:PH:Dist} ensures the equivalence between (ii) and (iii). To finish the proof, it remains verifying
the equivalence between (ii) and (iv). Observe first that we can always suppose that $\mathbb{B}_\eta(\ox)\subset\dom\Phi_t$ for all $t\in\Hat T$. Indeed, the choice of $\ox$ tells us that
$\ox\in \dom\Phi_t$ holds for a.e.\ $t\in T$. Hence it follows from (ii) that ${\rm haus}(\,\Phi_t(x),\Phi_t(\ox))$ is finite for all $x\in\mathbb{B}_\eta(\ox)$ and $t\in\Hat{T}$, which means
that $\mathbb{B}_\eta(\ox)\subset\dom\Phi_t$ whenever $t\in\Hat T$. On the other hand, assuming (iv) gives us $t\in \Hat{T}$, $y_t\in\Phi_t(\ox)$, and $x\in\mathbb{B}_\eta(\ox)$ such that
\eqref{equivdistance} yields ${\rm dist}\big(y_t;\Phi_t(x)\big)\le\ell(t)\|x-\ox\|$, which shows the necessity of $\mathbb{B}_\eta(\ox)\subset\dom\Phi_t$. The equivalence between (ii) and (iv)
follows now from \cite[Proposition~3C.1]{dr}, which therefore completes the proof of this proposition.\vspace*{0.05in}

The following example illustrates how the the distance characterizations of Proposition~\ref{prop:dist:car} allow us to easily check the fulfillment of the integrable local Lipschitzian property
of random multifunctions.\vspace*{-0.05in} \begin{example} $(${\em checking the integrable locally Lipschitzian property of multifunctions}$)$\label{exa} Let $F\colon T\tto\Y$, be an integrable
bounded measurable multifunction, i.e., there exists $\lambda\in\Leb^1(T,\R_+)$ such that $F(t)\subset\lambda(t)\mathbb{B}$, and let $b\colon T\times\X\to\Y$ and $A\colon
T\times\X\to\mathbb{R}^{m\times m}$ be two measurable mappings. Take $\bar x\in\X$ for which there are $\eta>0$, $\ell_1\in\R_+$, and $\ell_2\in\Leb^1(T,\R_+)$ ensuring that \begin{equation*}
\|A(t,x)-A(t,x')\|\le\ell_1\|x-x'\|\;\mbox{ and }\;\|b(t,x)-b(t,x')\|\le\ell_2(t)\|x-x'\| \end{equation*} for all $x,x'\in\mathbb{B}_\eta(\ox),\;t\in T$, and that $A(t,x)$ is nonsingular for such
$t,x$. Then the mapping $\Phi_t(x):=A(t,x)F(t)+b(t,x)$ is integrably locally Lipschitzian around $\ox$. Indeed, it follows from \cite[Example~9.32]{rw} that for a.e.\ $t\in T$ the modulus of
Lipschitz continuity of $\Phi_t$ on $\mathbb{B}_\eta(\ox)$ can be bounded by $\lambda(t)\ell_1+\ell_2(t)$. Thus we conclude from the distance characterizations \eqref{Hausd:Car} of
Proposition~\ref{prop:dist:car} that the above multifunction $\Phi$ enjoys the claimed Lipschitzian property. \end{example}\vspace*{-0.05in}

Our next goal is to establish {\em coderivative characterizations} of integrably locally Lipschitzian and related Lipschitzian properties of random multifunctions. Recall first that for {\em
deterministic} set-valued mappings $\Phi\colon\X\tto\Y$ the local Lipschitzian property \eqref{loc-lip} can be written in form \eqref{eq_definition_Int_loc}, where the measure space is a {\em
single atom}. The latter property is characterized by \begin{equation}\label{cod-loclip} \sup\big\{\|x^*\|\;\big|\;x^*\in D^*\Phi(\ox,\oy)(y^*)\big\}\le\ell\|y^*\|\;\mbox{ for all
}\;\oy\in\Phi(\ox),\;y^*\in\Y, \end{equation} provided that $\Phi$ is closed-graph and is {\em uniformly bounded} around this point; see \cite[Theorem~5.11]{m93}. Due the robustness of the
coderivative \eqref{lcod} and its representation \eqref{Lim_repr_cod}, the pointwise characterization in \eqref{cod-loclip} can be equivalently written in the following forms: there exist
$\eta>0$ and $\ell\ge 0$ such that \begin{equation}\label{cod-loclip1} \sup\big\{\|x^*\|\;\big|\;x^*\in D^*\Phi(x,y)(y^*)\big\}\le\ell\|y^*\|\;\mbox{ and }\;\sup\big\{\|x^*\|\;\big|\;x^*\in\Hat
D^*\Phi(x,y)(y^*)\big\}\le\ell\|y^*\| \end{equation} for all $x\in\B_\eta(\ox)$, $y\in\Phi(x)$, and $y^*\in\Y$.

Observe further that a graphical localization of the locally Lipschitzian property \eqref{loc-lip} of deterministic multifunctions $\Phi\colon\X\tto\Y$ is known as the {\em Lipschitz-like}
(pseudo-Lipschitz, Aubin) property of $\Phi\colon\X\tto\Y$ around $(\ox,\oy)\in\gph\Phi$ defined as: there exist $\eta>0$ and $\ell\ge 0$ such that
\begin{equation}\label{aub}
\Phi(x)\cap\B_\eta(\oy)\subset\Phi(x')+\ell\|x-x'\|\mathbb{B}\;\text{ for all }\;x,x'\in\mathbb{B}_\eta(\ox).
\end{equation}
As well recognized (see, e.g., \cite[Theorem~1.42]{m06}), the locally Lipschitzian property of $\Phi$ around $\ox\in\dom\Phi$ is equivalent to the Lipschitz-like property of $\Phi$ around
$(\ox,\oy)$ for all $\oy\in\Phi(\ox)$, provided that $\Phi$ is closed-graph and uniformly bounded around $\ox$. Due to this fact, the above characterization \eqref{cod-loclip} of the locally
Lipschitzian property of deterministic multifunctions is a consequence of the following characterizations of the Lipschitz-like property of $\Phi$ around $(\ox,\oy)$ given by
\begin{equation}\label{cod-cr}
\sup\big\{\|x^*\|\;\big|\;x^*\in D^*\Phi(\ox,\oy)(y^*)\big\}\le\ell\|y^*\|\;\mbox{ for all }\;y^*\in\Y,
\end{equation}
which is known as the {\em coderivative/Mordukhovich criterion}; see \cite[Theorem~5.7]{m93} and \cite[Theorem~9.40]{rw}. Similarly to the case of locally Lipschitzian multifunctions, we can
equivalently reformulate \eqref{cod-cr} via the neighborhood estimates in \eqref{cod-loclip1} but valid now for all $x\in\B_\eta(\ox)$, $y\in\Phi(x)\cap\B_\eta(\oy)$, and $y^*\in\Y$ without the
uniform boundedness assumption on $\Phi$ around $\ox$.\vspace*{0.05in}

The situation with {\em random} multifunctions is essentially more involved in comparison with the deterministic case, being rather similar in some aspects while significantly different in the
others; this can be precisely seen from the results given below. Let us start with a coderivative characterization of integrably locally Lipschitzian multifunctions \eqref{eq_definition_Int_loc}
defined on general measure spaces. The following theorem not only extends the coderivative conditions in \eqref{cod-loclip1} to random mappings, but also provides a new result in the
deterministic case by {\em dropping the uniform boundedness} assumption. For brevity, we prove a regular coderivative characterization similar to the second estimate in \eqref{cod-loclip1}. The
one in the form of the first estimate therein can be derived similarly to the proof of Proposition~\ref{eq:regular:basic} given below.\vspace*{-0.05in} \begin{theorem}[\bf coderivative
characte	rization of integrably locally Lipschitzian multifunctions]\label{Prop47} Let $\Phi\colon T\times\X\tto\Y$ be a set-valued normal integrand defined on a complete finite measure space
$(T,\mathcal{A},\mu)$, and let $\ox\in\dom\Intfset{\Phi}$. Then $\Phi$ is integrably locally Lipschitzian around $\ox$ if and only if there exist $\eta>0$, $\ell\in\Leb^1(T,\R_+)$, and $\Hat
T\in\mathcal{A}$ with $\mu(T\backslash\Hat T)=0$ such that \begin{equation}\label{Int_Lips_like_inq_nonatomic} \sup\big\{\|x^*\|\;\big|\;x^*\in\Hat
D^*\Phi_t(x,y)(y^*)\big\}\le\ell(t)\|y^*\|\;\mbox{ for all }\;y\in\Phi_t(x)\;\mbox{ and }\;t\in\Hat T \end{equation} whenever $x\in\B_\eta(\ox)$ and $y^*\in\R^m$. \end{theorem}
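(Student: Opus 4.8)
The strategy is to reduce the random (integrable) coderivative condition to the deterministic pointwise coderivative characterization of the Lipschitz-like/locally Lipschitzian property, applied pointwise in $t$, and then to glue the pointwise Lipschitz moduli into a single integrable function $\ell\in\Leb^1(T,\R_+)$ using measurability of the coderivative multifunction. The key technical tools available are: the distance characterizations of Proposition~\ref{prop:dist:car}; the graph measurability of $t\mapsto\gph\Hat D^*\Phi_t$ from Proposition~\ref{lemma_measurability_reg_sub}(iii); the deterministic coderivative criterion \eqref{cod-cr} together with its reformulation \eqref{cod-loclip1}; and the robustness/representation \eqref{Lim_repr_cod} of the limiting coderivative. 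The heart of the matter is that the regular coderivative estimate \eqref{Int_Lips_like_inq_nonatomic}, holding at every $(x,y)\in\gph\Phi_t$ near $\ox$, is exactly the deterministic characterization of the locally Lipschitzian property of the single multifunction $\Phi_t$ with a common modulus $\ell(t)$, once one knows $\B_\eta(\ox)\subset\dom\Phi_t$.

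\medskip
\textbf{Necessity.} Suppose $\Phi$ is integrably locally Lipschitzian around $\ox$, so \eqref{eq_definition_Int_loc} holds for some $\eta>0$, $\ell\in\Leb^1(T,\R_+)$, and $\Hat T$ with $\mu(T\setminus\Hat T)=0$. First I would fix $t\in\Hat T$ and apply the deterministic theory to $\Phi_t$ alone: inclusion \eqref{eq_definition_Int_loc} says precisely that $\Phi_t$ is locally Lipschitzian around $\ox$ with modulus $\ell(t)$ on $\B_\eta(\ox)$, and by Proposition~\ref{prop:dist:car}(ii) we have $\B_\eta(\ox)\subset\dom\Phi_t$. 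The local Lipschitzian property of $\Phi_t$ around $\ox$ is equivalent to the Lipschitz-like property of $\Phi_t$ at every $(\ox,y)\in\gph\Phi_t$, and the neighborhood form of the coderivative criterion \eqref{cod-loclip1} then yields the regular coderivative bound $\sup\{\|x^*\|\mid x^*\in\Hat D^*\Phi_t(x,y)(y^*)\}\le\ell(t)\|y^*\|$ for all $x\in\B_\eta(\ox)$, $y\in\Phi_t(x)$, and $y^*\in\Y$, after possibly shrinking $\eta$. This is exactly \eqref{Int_Lips_like_inq_nonatomic}, with the same $\ell$ and $\Hat T$; no measurability argument is needed in this direction because the modulus carries over verbatim.

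\medskip
\textbf{Sufficiency.} Here is where the main obstacle lies. Assume \eqref{Int_Lips_like_inq_nonatomic} holds with some $\eta>0$, $\ell\in\Leb^1(T,\R_+)$, and $\Hat T$. For each fixed $t\in\Hat T$, estimate \eqref{Int_Lips_like_inq_nonatomic} is the regular-coderivative neighborhood criterion \eqref{cod-loclip1} for the deterministic mapping $\Phi_t$, so $\Phi_t$ is locally Lipschitzian around $\ox$ with modulus $\ell(t)$, giving the pointwise inclusion $\Phi_t(x)\subset\Phi_t(x')+\ell(t)\|x-x'\|\mathbb{B}$ for $x,x'\in\B_\eta(\ox)$. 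The delicate point is that the deterministic coderivative criterion in its uniform-boundedness-free form needs to deliver a modulus that is \emph{exactly} $\ell(t)$ (not merely proportional) and valid on a \emph{common} radius $\eta$ independent of $t$; this is what lets the pointwise inclusions assemble into the single integrable inclusion \eqref{eq_definition_Int_loc} required by Definition~\ref{integ-lip}. The technical content is verifying that the passage from the coderivative bound to the Lipschitz inclusion is modulus-preserving with no loss of constant, which I would handle via the mean-value/scalarization estimate built from \eqref{cod-cr} and the subderivative representation \eqref{rep_reg_sub}; the function $\ell$ is already given as an element of $\Leb^1(T,\R_+)$, so its measurability and integrability are automatic and no selection-theoretic gluing (Proposition~\ref{lemma_measurability_reg_sub}(iii) or Proposition~\ref{Prop_measurableselection}) is strictly needed once the pointwise inclusion holds with that $\ell(t)$. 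With \eqref{eq_definition_Int_loc} established for all $t\in\Hat T$ and $x,x'\in\B_\eta(\ox)$, Definition~\ref{integ-lip} is met and $\Phi$ is integrably locally Lipschitzian around $\ox$, completing the proof. $\h$
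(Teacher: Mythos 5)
Your necessity direction (from the integrable Lipschitz inclusion \eqref{eq_definition_Int_loc} to the coderivative bound \eqref{Int_Lips_like_inq_nonatomic}) is essentially the paper's argument and is fine: for fixed $t\in\Hat T$ the inclusion gives the Lipschitz-like property of $\Phi_t$ with modulus $\ell(t)$ at every point of $\gph\Phi_t$ over $\B_{\eta/2}(\ox)$, and the elementary half of the coderivative criterion (cf.\ \cite[Theorem~1.43]{m06}) yields the regular coderivative estimate with the same modulus; no uniform boundedness and no measurability argument are needed there.

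The sufficiency direction, however, contains a genuine gap. You pass from \eqref{Int_Lips_like_inq_nonatomic} to the locally Lipschitzian property of each $\Phi_t$ by invoking the ``neighborhood criterion'' \eqref{cod-loclip1}. But \eqref{cod-loclip1}, as a characterization of the Hausdorff locally Lipschitzian property \eqref{loc-lip}, is available in the literature (\cite[Theorem~5.11]{m93}) only under the assumption that $\Phi_t$ is \emph{uniformly bounded} around $\ox$ --- an assumption absent from the theorem, and whose removal the paper explicitly identifies as the point of novelty even in the deterministic case. So for each fixed $t$ you are citing precisely the statement that has to be proven. You do flag the ``delicate point'' that the passage must be modulus-preserving on a common ball, but the proposed fix (a ``mean-value/scalarization estimate built from \eqref{cod-cr} and \eqref{rep_reg_sub}'') is not an argument: the criterion \eqref{cod-cr} only delivers the graphically localized Lipschitz-like property \eqref{aub} around each point of $\gph\Phi_t$, with a neighborhood whose size depends on that point, and scalarization does not upgrade this to the uniform inclusion \eqref{eq_definition_Int_loc} on $\B_\eta(\ox)$ when the values $\Phi_t(x)$ are unbounded. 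What the paper actually does to close this gap is a continuation argument along the segment joining $x_0,x_1\in\B_{\eta/2}(\ox)$: since the coderivative bound holds at \emph{all} graph points over $\B_\eta(\ox)$ with the single modulus $\ell(t)$, the Mordukhovich criterion \cite[Theorem~4.7]{m06} (which needs no boundedness) gives the Lipschitz-like property with modulus $\ell(t)$ at each such point; one then sets $\bar\lambda$ to be the supremum of those $\lambda$ with $y\in\Phi_t(x_\lambda)+\ell(t)\lambda\|x_0-x_1\|\B$, shows $\bar\lambda>0$, shows the supremum is attained using the closedness of $\gph\Phi_t$, and applies the Lipschitz-like property at the attained point to push past $\bar\lambda$, contradicting maximality. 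This segment argument, which converts pointwise Lipschitz-like behavior with a uniform modulus into the Hausdorff inclusion with exactly that modulus, is the step missing from your proof.
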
\vspace*{-0.05in}
{\bf Proof}. First we show that the fulfillment of condition \eqref{Int_Lips_like_inq_nonatomic} for all $x\in\B_\eta(\ox)$ and $y^*\in\R^m$ yields the integrable local Lipschitz property of
$\Phi$ around $\ox$. Take $\eta>0$, $\ell\in\Leb^1(T,\R_+)$, and $\Hat T\in\mathcal{A}$ with $\mu(T\backslash\Hat T)$ for which \eqref{Int_Lips_like_inq_nonatomic} holds, and then fix $t\in\Hat
T$. Picking $x_0,x_1\in\mathbb{B}_{\eta/2}(\ox)$ and $y\in\Phi_t(x_0)$, we claim that \begin{equation}\label{equation000} y\in\Phi_t(x_1)+\ell(t)\|x_0-x_1\|\mathbb{B}. \end{equation} To proceed,
define $x_\lambda:=(1-\lambda)x_0+\lambda x_1$ with $\lambda\in[0,1]$ and denote \begin{equation}\label{lm}
\bar\lambda:=\sup\big\{\lambda>0\;\big|\;y\in\Phi_t(x_\lambda)+\ell(t)\lambda\|x_0-x_1\|\mathbb{B}\big\}. \end{equation} Since $t$ is fixed, we can consider $\Phi_t(x)$ as a deterministic
multifunction and thus deduce from the aforementioned version of \eqref{cod-loclip1} as a characterization of the Lipschitz-like property of deterministic multifunctions by
\cite[Theorem~4.7]{m06} that $\Phi_t$ is Lipschitz-like around $(x_0,y)$. This gives us $\eta_{x_0,y}>0$ such that \begin{equation*}
y\in\Phi_t(x_0)\cap\mathbb{B}_{\eta_{x_0,y}}(y)\subset\Phi_t(u)+\ell(t)\|x_0-u\|\mathbb{B}\;\text{ for all }\;u\in\mathbb{B}_{\eta_{x_0,y}}(x_0). \end{equation*} Using the above inclusion for
$u=x_\lambda$ with a sufficiently small number $\lambda>0$ shows that $\bar\lambda>0$. Let us show that the supremum in \eqref{lm} is attained, i.e., we have \begin{equation}\label{equation001}
\exists y_{\bar{\lambda}}\in\Phi_t(x_{\bar\lambda}),\;\exists v_{\bar\lambda}\in\mathbb{B}\;\text{ such that }\;y=y_{\bar\lambda}+\ell(t)\bar\lambda\|x_0-x_1\|v_{\bar\lambda}. \end{equation}
Indeed, consider a sequence $\lambda_k\uparrow\bar\lambda$ together with the points $y_{\lambda_k}\in\Phi_t(x_{\lambda_k})$ and $v_k\in\mathbb{B}$ satisfying \begin{equation}\label{equation002}
y=y_{\lambda_k}+\ell(t){\lambda_k}\|x_0-x_1\|v_k,\quad k\in\N. \end{equation} It follows that $x_{\lambda_k}\to x_{\bar\lambda}$ as $k\to\infty$, and that $v_k\to v_{\bar \lambda}\in\mathbb{B}$
along a subsequence. Thus we deduce from \eqref{equation002} that a subsequence of $\{y_{\lambda_k}\}$ converges to some $y_{\bar\lambda}$, which yields \eqref{equation001} by the closedness of
$\gph\Phi_t$.

Applying now \eqref{Int_Lips_like_inq_nonatomic} and the characterization of \cite[Theorem~4.7]{m06} tells us that $\Phi_t$ is Lipschitz-like around $(x_{\bar \lambda},y_{\bar \lambda})$, i.e.,
there exists $\eta_{\bar\lambda}>0$ ensuring \begin{equation*}
y_{\bar\lambda}\in\Phi_t(x_{\bar\lambda})\cap\mathbb{B}_{\eta_{\bar\lambda}}(y_{\bar\lambda})\subset\Phi_t(w)+\ell(t)\|w-x_{\bar\lambda}\|\mathbb{B}\;\mbox{ for all
}\;w\in\mathbb{B}_{\eta_{\bar\lambda}} (x_{\bar\lambda}). \end{equation*} The latter allows us to find $\lambda'>\bar\lambda$ for which $x_{\lambda'}\in\mathbb{B}(x_{\bar\lambda})$,
$y_{\lambda'}\in\Phi_t(x_{\lambda'})$, and $v_{\lambda'} \in\mathbb{B}$ are such that \begin{equation}\label{equation004} y_{\bar\lambda}\in
y_{\lambda'}+\ell(t)\|x_{\lambda'}-x_{\bar\lambda}\|v_{\lambda'}=y_{\lambda'}+\ell(t)(\lambda'-\bar\lambda)\|x_1-x_0\|v_{\lambda'}. \end{equation} It readily follows from \eqref{equation001} and
\eqref{equation004} that \begin{align*} y&=y_{\bar\lambda}+\ell(t)\bar\lambda\|x_0-x_1\|v_{\bar\lambda}\\
&=y_{\lambda'}+\ell(t)(\lambda'-\bar\lambda)\|x_1-x_0\|v_{\lambda'}+\ell(t)\bar\lambda\|x_0-x_1\|v_{\bar\lambda}\\
&=y_{\lambda'}+\ell(t)\lambda'\|x_1-x_0\|\left(\frac{\lambda'-\bar\lambda}{\lambda'}v_{\lambda'}+\frac{\bar\lambda}{\lambda'}v_{\bar\lambda}\right)\\
&\in\Phi_t(x_{\lambda'})+\ell(t)\lambda'\|x_0-x_1\|\mathbb{B}, \end{align*} which contradicts the maximality of $\bar\lambda$ as chosen in \eqref{lm}. This gives us \eqref{equation000} and
verifies therefore that $\Phi$ is integrably locally Lipschitzian around $\ox$.

Conversely, suppose that $\Phi$ is integrably locally Lipschitzian around $\ox$ and thus find $\eta>0$, $\ell\in\Leb^1(T,\R_+)$, and $\Hat T\in\mathcal{A}$ with $\mu(T \backslash\Hat T)$ such
that \eqref{eq_definition_Int_loc} is satisfied. Taking $x\in\mathbb{B}_{\eta/2}(\ox)$, $t\in\Hat T$, and $y\in\Phi_t(x)$, we deduce from \eqref{eq_definition_Int_loc} and the characterization of
\cite[Theorem~4.7]{m06} that \begin{equation*} \sup\big\{\|x^\ast\|\;\big|\;x^\ast\in\Hat{D}^\ast\Phi_t(x,y)(y^\ast)\big\}\le\ell(t)\|y^\ast\|\;\mbox{ for all }\;y^\ast\in\Y, \end{equation*}
which concludes the proof of the theorem. $\h$\vspace*{0.05in}

Now we are ready to introduce two {\em graphically localized} Lipschitzian properties of multifunctions defined on finite measure spaces. They both may be considered as extensions to random
multifunctions of the Lipschitz-like property \eqref{aub} and its coderivative characterizations for deterministic multifunctions while being generally different from each other in the stochastic
case.

To proceed, we associate with a given set-valued integrand $\Phi\colon T\times\X\tto\Y$ the multifunction ${\cal S}\colon\X\times\Y\to\Leb^1(T,\Y)$ defined by
\begin{equation}\label{mapping:SPHI}
\mathcal{S}_{\Phi}(x,y):=\Big\{\y\in\Leb^1(T,\Y)\;\Big|\;\int_T\y(t)d\mu=y\;\text{ and }\;\y(t)\in\Phi_t(x)\;\text{ for a.e. }\;t\in T\Big\}.
\end{equation}\vspace*{-0.2in}
\begin{definition}[\bf graphically localized random Lipschitzian multifunctions]\label{int-lipl} Let $\Phi\colon T\times\X\tto\Y$ be a set-valued normal integrand on a complete finite measure space
$(T,\mathcal{A},\mu)$, let $(\ox,\oy)\in\gph\Intfset{\Phi}$ for $E_\Phi$ taken from \eqref{def:set-valued:Exp}, and let $\ooy\in\mathcal{S}_{\Phi}(\ox,\oy)$ for ${\cal S}_\Phi$ taken from
\eqref{mapping:SPHI}. We say that:

{\bf (i)} $\Phi$ is {\em integrably Lipschitz-like} around $(\ox,\ooy)$ if there exist $\ell,\in\Leb^1(T,\R_+)$, strictly positive measurable functions  $\eta$ and $\gg$, and a measurable set
$\Hat T\in\mathcal{A}$ with $\mu(T\backslash\Hat T)=0$ such that \begin{equation}\label{eq_definition_Int_loc:Lipschitz-like}
\Phi_t(x)\cap\B_{\gg(t)}\big(\y(t)\big)\subset\Phi_t(x')+\ell(t)\|x-x'\|\mathbb{B}\;\text{ for  all }\;t\in\Hat T\;\text{ and }\;x,x'\in\mathbb{B}_{\eta(t)}(\ox). \end{equation} {\bf(ii)} $\Phi$
is {\em integrably quasi-Lipschitzian} around $(\ox,\ooy)$ if there exist $\eta>0$ and $\ell\in\Leb^1(T,\R_+)$ such that \begin{equation}\label{Int_Lips_like_inq}
\sup\big\{\|x^\ast\|\;\big|\;x^\ast\in{D}^\ast\Phi_t\big(\x(t),\y(t)\big)\big(\y^\ast(t)\big)\big\}\le\ell(t)\|\y^\ast(t)\|\text{ for a.e. }\;t\in T \end{equation} whenever
$\x\in\mathbb{B}_\eta(\ox)$, $\y\in\mathbb{B}_\eta(\ooy)\cap\Phi(\x)$, and $\y^\ast\in\Leb^\infty(T,\Y)$ with \begin{equation*}
\mathbb{B}_\eta(\ooy)\cap\Phi(\x):=\big\{\y\in\Leb^1(T,\Y)\;\big|\;\y\in\mathbb{B}_\eta(\ooy)\;\text{ and }\;\y(t)\in\Phi_t\big(\x(t)\big)\;\text{ a.e.}\big\}. \end{equation*} \end{definition}

First we show that the integrable quasi-Lipschitzian property can be equivalently reformulated in terms of the regular coderivative \eqref{rcod} replacing the basic one in
\eqref{Int_Lips_like_inq}. \begin{proposition}[\bf equivalent description of the integrable quasi-Lipschitzian property]\label{eq:regular:basic} Let $\Phi\colon T\times\X\tto\Y$ be a set-valued
normal integrand with $(\ox,\oy)\in\gph\Intfset{\Phi}$ and $\ooy\in\mathcal{S}_{\Phi}(\ox,\oy)$. Then $\Phi$ is integrably quasi-Lipschitzian around $(\ox,\ooy)$ if and only if there exist
$\eta>0$ and $\ell\in\Leb^1(T,\R_+)$ such that \begin{equation}\label{Regular:Int_Lips_like_inq}
\sup\big\{\|x^\ast\|\;\big|\;x^\ast\in\Hat{D}^\ast\Phi_t\big(\x(t),\y(t)\big)\big(\y^\ast(t)\big)\big\}\le\ell(t)\|\y^\ast(t)\|\;\text{ for a.e. }\;t\in T \end{equation} whenever
$\x\in\mathbb{B}_\eta(\ox)$, $\y\in\mathbb{B}_\eta(\ooy)\cap\Phi(\x)$, and $\y^\ast\in\Leb^\infty(T,\Y)$. \end{proposition}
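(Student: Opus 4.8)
The plan is to prove the two implications separately; all the substance lies in deriving the limiting estimate \eqref{Int_Lips_like_inq} from the regular one \eqref{Regular:Int_Lips_like_inq}, while the reverse is immediate. For the easy implication ``$\Rightarrow$'', recall that the regular normal cone \eqref{rnc} is always contained in the limiting one \eqref{lnc}, so $\Hat D^*\Phi_t(x,y)(y^*)\subseteq D^*\Phi_t(x,y)(y^*)$ for every $(x,y)\in\gph\Phi_t$ and $y^*\in\Y$. Hence the supremum over the regular coderivative never exceeds that over the limiting coderivative, and \eqref{Int_Lips_like_inq} yields \eqref{Regular:Int_Lips_like_inq} verbatim with the same $\eta$ and $\ell$.

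For the hard implication ``$\Leftarrow$'', assume \eqref{Regular:Int_Lips_like_inq} with data $\eta,\ell$. The only available bridge from $\Hat D^*$ to $D^*$ is the limiting representation \eqref{Lim_repr_cod}, which is \emph{pointwise} in the graph of the fixed deterministic mapping $\Phi_t$; in contrast, \eqref{Regular:Int_Lips_like_inq} is a \emph{functional} statement holding a.e.\ only after the measurable functions $\x,\y,\y^*$ have been fixed. The key step is therefore to \emph{decouple} \eqref{Regular:Int_Lips_like_inq} into the pointwise assertion: there is $\Hat T\in\mathcal{A}$ with $\mu(T\setminus\Hat T)=0$ so that for every $t\in\Hat T$, every $x\in\mathbb{B}_\eta(\ox)$, every $y\in\Phi_t(x)$ in the prescribed localization around $\ooy(t)$, and every $y^*\in\Y$ one has $\sup\{\|x^*\|\;|\;x^*\in\Hat D^*\Phi_t(x,y)(y^*)\}\le\ell(t)\|y^*\|$. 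This statement no longer refers to any particular selection and can be processed fiberwise.

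To carry out the decoupling, fix $N\in\N$ and consider the set $B_N$ of quintuples $(t,x,y,y^*,x^*)$ with $x\in\mathbb{B}_\eta(\ox)$, $\|y^*\|\le N$, $y$ in the localization around $\ooy(t)$, $x^*\in\Hat D^*\Phi_t(x,y)(y^*)$, and $\|x^*\|>\ell(t)\|y^*\|$. Graph measurability of $t\mapsto\gph\Hat D^*\Phi_t$ from Proposition~\ref{lemma_measurability_reg_sub}(iii), together with measurability of $\ell$ and $\ooy$, makes $B_N$ measurable; by completeness of $(T,\mathcal{A},\mu)$ its projection $\pi_T(B_N)$ onto $T$ is measurable. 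If $\mu(\pi_T(B_N))>0$ for some $N$, then Proposition~\ref{Prop_measurableselection} applied to the fiber multifunction of $B_N$ produces measurable $\x,\y,\y^*$ selecting $B_N$ over $\pi_T(B_N)$ and set to admissible constants elsewhere: the truncation $\|y^*\|\le N$ forces $\y^*\in\Leb^\infty(T,\Y)$, and choosing the pointwise localization radius with integral below $\eta$ keeps $\y$ inside $\mathbb{B}_\eta(\ooy)\cap\Phi(\x)$. Such a triple violates \eqref{Regular:Int_Lips_like_inq} on a positive-measure set, a contradiction. Thus $\mu(\pi_T(B_N))=0$ for all $N$, and since $\bigcup_N B_N$ exhausts all finite $y^*$, the pointwise estimate holds on a full-measure set $\Hat T$.

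Finally, fix $t\in\Hat T$ and treat $\Phi_t$ as deterministic. For admissible $\x,\y,\y^*$ (with a slightly shrunk radius) and $x^*\in D^*\Phi_t(\x(t),\y(t))(\y^*(t))$, representation \eqref{Lim_repr_cod} supplies $(x_k,y_k)\to(\x(t),\y(t))$ in $\gph\Phi_t$, $v_k^*\to\y^*(t)$, and $x_k^*\to x^*$ with $x_k^*\in\Hat D^*\Phi_t(x_k,y_k)(v_k^*)$; for large $k$ the points $(x_k,y_k)$ fall in the region where the pointwise regular estimate applies, giving $\|x_k^*\|\le\ell(t)\|v_k^*\|$, and passing to the limit yields $\|x^*\|\le\ell(t)\|\y^*(t)\|$. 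Taking the supremum and reassembling over $t\in\Hat T$ delivers \eqref{Int_Lips_like_inq}. I expect the main obstacle to be the decoupling step, and within it the reconciliation of the function-space localization of $\y$ in Definition~\ref{int-lipl}(ii) with the pointwise localization needed to invoke \eqref{Lim_repr_cod} fiberwise; this is exactly what forces the truncation over $N$ and the choice of a pointwise neighborhood radius whose integral is controlled by $\eta$, so that every manufactured bad selection remains genuinely admissible.
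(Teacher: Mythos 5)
Your overall architecture for the hard implication (decouple the functional estimate \eqref{Regular:Int_Lips_like_inq} into a pointwise statement via graph measurability, the projection theorem, and measurable selection, then pass from regular to limiting coderivatives fiberwise through \eqref{Lim_repr_cod}) is legitimate, and the measurability ingredients you cite are the right ones. The genuine gap sits exactly at the point you flag as the main obstacle: the choice of the pointwise localization radius $r(t)$ around $\ooy(t)$, and no radius ``whose integral is controlled by $\eta$'' can work. For your final fiberwise step you need every admissible $\y\in\mathbb{B}_{\eta'}(\ooy)\cap\Phi(\x)$ to satisfy $\|\y(t)-\ooy(t)\|<r(t)$ a.e.\ (with room to spare so that the approximating points $y_k\to\y(t)$ from \eqref{Lim_repr_cod} also stay inside); but on the nonatomic part $T_{na}$ the $\Leb^1$-ball constraint permits $\|\y(t)-\ooy(t)\|$ to be arbitrarily large on sets of small measure, which forces $r(t)=+\infty$ a.e.\ on $T_{na}$ --- incompatible with any integrable radius. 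Conversely, if you take $r\equiv+\infty$ on $T_{na}$, your decoupling contradiction breaks there, since the manufactured bad selection $\y$ need not be $\Leb^1$-admissible; this can only be repaired by the nonatomicity shrinking trick (pass to a subset $B$ of the bad set with $\mu(B)>0$ and $\int_B\|\y(t)-\ooy(t)\|d\mu\le\eta$, exactly as in the proof of Theorem~\ref{nonatomic:charact}), while on an atom $T_k$ the only viable radius is of order $\eta/\mu(T_k)$, using constancy on atoms to keep the selection admissible (as in Theorem~\ref{Prop4700}). So the correct pointwise statement is a hybrid --- unlocalized on $T_{na}$, atom-scaled on the purely atomic part --- and proving it requires the atomic/nonatomic decomposition plus the shrinking argument, none of which appears in your proposal. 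As written, whichever single radius you choose, either the decoupling step or the final limiting step fails.

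For contrast, the paper's proof avoids the decoupling, and hence this dilemma, altogether: it fixes the admissible triple $(\x,\y,\y^\ast)$, takes a Castaing representation $\{\x_k^\ast\}$ of the measurable multifunction $t\mapsto D^\ast\Phi_t(\x(t),\y(t))(\y^\ast(t))$, and then for each $k$ and $\epsilon>0$ uses \eqref{Lim_repr_cod} together with Proposition~\ref{Prop_measurableselection} to select functions $(\u,\u^\ast,\v,\v^\ast)$ with $\u^\ast(t)\in\Hat{D}^\ast\Phi_t(\u(t),\v(t))(\v^\ast(t))$ lying pointwise $\epsilon$-close to $(\x(t),\x_k^\ast(t),\y(t),\y^\ast(t))$. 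Because the perturbation is pointwise small and uniform, the perturbed triple is automatically admissible for \eqref{Regular:Int_Lips_like_inq}, which gives $\|\x_k^\ast(t)\|\le\ell(t)\|\y^\ast(t)\|+2\epsilon$ a.e.; letting $\epsilon\dn 0$ and taking the supremum over $k$ yields \eqref{Int_Lips_like_inq}. In short, approximating along the given admissible functions sidesteps the need for any function-free pointwise estimate; your route can be completed, but only by importing the atomic/nonatomic machinery of Section~\ref{sec3}.
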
 {\bf Proof}. We obviously have that \eqref{eq_definition_Int_loc}
yields \eqref{Regular:Int_Lips_like_inq}. To verify the opposite implication, suppose without loss of generality that $\mu(T)=1$ and take $\eta>0$ and $\ell\in\Leb^1(T,\R_+)$ from
\eqref{Regular:Int_Lips_like_inq}. Fix further $\gamma\in(0,\eta)$ and pick $\x\in\mathbb{B}_{\gamma}(\ox)$, $\y\in\mathbb{B}_\gamma(\ooy)\cap\Phi(\x)$, and $\y^\ast\in\Leb^\infty(T,\Y)$.
Proposition~\ref{lemma_measurability_reg_sub} tells us that the set-valued mapping $F(t):={D}^\ast\Phi_t(\x(t),\y(t))(\y^\ast(t))$ is measurable with closed values on $T$. By the {\em Castaing
representation} of the measurable multifunction $F$ (see, e.g., \cite[Theorem~14.5]{rw}) we find a sequence of measurable selections $\x_k^\ast(t)\in F(t)$ for a.e.\ $t\in\Hat T:=\dom F$ such
that \begin{equation*} \sup\big\{\|x_k^\ast(t)\|\;\big|\;k\in\N\big\}=\sup\big\{\|x^\ast\|\;\big|\;x^\ast\in{D}^\ast\Phi_t\big(\x(t),\y(t)\big)\big(\y^\ast(t)\big)\big\}\;\text{ for a.e.
}\;t\in\Hat T. \end{equation*} Fixing any $\epsilon\in(0,(\eta-\gamma)/2)$ and $k\in\N$, define the multifunction $F^k_\epsilon\colon\Hat T\tto\R^{2(n+m)}$ by \begin{equation*}
(u,u^\ast,v,v^\ast)\in F^k_\epsilon(t)\Longleftrightarrow \left\{\begin{array}{ll}
u^\ast\in\Hat{D}^\ast\Phi_t(u,v)(v^\ast),\;u\in\mathbb{B}_\gamma\big(\x(t)\big),\;v\in\mathbb{B}_{\gamma}\big(\y(t)\big),\\
\|u^\ast-\x_k^\ast(t)\|\le\epsilon,\;\|v^\ast-\ooy^\ast(t)\|\le\epsilon/(1+\ell(t)). \end{array}\right. \end{equation*} It follows from Proposition~\ref{lemma_measurability_reg_sub} and the
measurability of the mappings involved that the multifunctions $F^k_\epsilon$ are graph measurable. Applying  the coderivative representation \eqref{Lim_repr_cod} to $F_\epsilon^k(\cdot)$ ensures
that the sets $F_\epsilon^k(t)$ are nonempty for a.e.\ $t\in\Hat T$. Then Proposition~\ref{Prop_measurableselection} gives us a measurable selection $(\u(t),\u^\ast(t),\v(t),\v^\ast(t))\in
F_\epsilon^k(t)$ for a.e.\ $t\in\Hat T$ and $\ve,k$ fixed above. Thus $\u\in\mathbb{B}_\eta(\ox)$, $\v\in\mathbb{B}_\eta(\ooy)\cap\Phi(\u)$, and $\v^\ast\in\Leb^\infty(T,\Y)$. Employing the
regular coderivative estimate \eqref{Regular:Int_Lips_like_inq} ensures the relationships \begin{align*}
\|\x_k^\ast(t)\|&\le\|\u^\ast(t)\|+\epsilon\le\ell(t)\|\v^\ast(t)\|+\epsilon\le\ell(t)\|\ooy^\ast(t)\|+\ell(t)\|\v^\ast(t)-\ooy^\ast(t)\|+\epsilon\\ &\le\ell(t)\|\ooy^\ast(t)\|+2\epsilon\;\mbox{
a.e. }\;t\in\Hat T. \end{align*} Since $\epsilon\in(0,(\eta-\gamma)/2)$ and $k\in\N$ were chosen arbitrarily, we arrive at the integrable quasi-Lipschitzian property \eqref{Int_Lips_like_inq} and
thus complete the proof of the proposition. $\h$\vspace*{0.08in}

Next we establish closed relationships (actually the equivalence) between the graphically localized Lipschitzian properties of random multifunctions from Definition~\ref{int-lipl} in the case of
{\em purely atomic} spaces with countably many atoms. The following theorem can be treated as a stochastic extension of the coderivative characterization of the Lipschitz-like property for
deterministic multifunctions.\vspace*{-0.05in}\begin{theorem}[\bf relationships between graphically localized integrably Lipschitzian properties]\label{Prop4700} Let $(T,\mu,\mathcal{A})$ be a
purely atomic measure space consisting of countable disjoint family of atoms $(T_k)_{k\in\N}$. Consider a set-valued normal integrand $\Phi\colon T\times\X\tto\Y$ with
$(\ox,\oy)\in\gph\Intfset{\Phi}$ and $\ooy\in\mathcal{S}_{\Phi}(\ox,\oy)$. If $\Phi$ is integrably quasi-Lipschitzian around $(\ox,\ooy)$, then there exist $\gg>0$ and $\ell\in\Leb^1(T,\R_+)$
such that for all $k\in\N$ we have the integrable Lipschitz-like property in the form \begin{equation}\label{eq_definition_Int_locPurely} \Phi_t(x)\cap\mathbb{B}_{\frac{\gamma}{\mu(T_k)
}}\big(\oy(t)\big)\subset\Phi_t(x')+\ell(t)\|x-x'\|\mathbb{B}\; \text{ for  all }\;x,x'\in\mathbb{B}_{\frac{\gamma}{(\ell(t)+1)}} (\ox) \;\text{ and } \;t\in T_k. \end{equation} Conversely, the
fulfillment of the integrable Lipschitz-like property \eqref{eq_definition_Int_locPurely} with some $\ell\in\Leb^\infty(T,\R_+)$ implies that $\Phi$ is integrably quasi-Lipschitzian around
$(\ox,\oy)$. \end{theorem}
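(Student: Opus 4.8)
The plan is to reduce everything to the deterministic theory on each atom. Since every measurable (scalar, vector, or set-valued) function is a.e.\ constant on an atom, for each $k$ there are a closed-graph deterministic multifunction $\Phi_{(k)}\colon\X\tto\Y$, a point $y_k\in\Y$, and a constant $\ell_k\ge 0$ with $\Phi_t\equiv\Phi_{(k)}$, $\ooy(t)\equiv y_k$, and $\ell(t)\equiv\ell_k$ for a.e.\ $t\in T_k$; moreover $y_k\in\Phi_{(k)}(\ox)$ since $\ooy\in\mathcal{S}_\Phi(\ox,\oy)$, and in \eqref{eq_definition_Int_locPurely} the symbol $\oy(t)$ is read as $\ooy(t)\equiv y_k$. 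Normalizing $\mu(T)=1$ as in the proof of Proposition~\ref{eq:regular:basic}, the remaining work is a bookkeeping translation between the functional-space localizations of Definition~\ref{int-lipl}(ii)---the $\x$-ball measured in $\Leb^\infty$ and the $\y$-ball in $\Leb^1$---and pointwise localizations on the atoms. The $\Leb^1$ norm weights $T_k$ by $\mu(T_k)$, which is exactly the source of the radius $\gamma/\mu(T_k)$ attached to $y_k$, while the uniform $\Leb^\infty$ control of $\x$ is the source of the $x$-radius of the type $\gamma/(\ell_k+1)$.

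For the forward implication I would first extract, from the integrable quasi-Lipschitzian estimate \eqref{Int_Lips_like_inq}, a pointwise deterministic coderivative bound on each atom. Given any $(x,y)\in\gph\Phi_{(k)}$ with $\|x-\ox\|\le\eta$ and $\|y-y_k\|\le\eta/\mu(T_k)$ and any $y^\ast\in\Y$, test \eqref{Int_Lips_like_inq} with the functions equal to $(x,y,y^\ast)$ on $T_k$ and to $(\ox,\ooy(\cdot),0)$ off $T_k$; these lie in the admissible balls since the $\x$-component has $\Leb^\infty$-distance $\|x-\ox\|\le\eta$ from $\ox$ while the $\y$-component has $\Leb^1$-distance $\|y-y_k\|\mu(T_k)\le\eta$ from $\ooy$, and reading the resulting a.e.\ inequality on the positive-measure set $T_k$ gives $\sup\{\|x^\ast\|\mid x^\ast\in D^\ast\Phi_{(k)}(x,y)(y^\ast)\}\le\ell_k\|y^\ast\|$. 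With this estimate available on the region $\mathbb{B}_\eta(\ox)\times\big(\mathbb{B}_{\eta/\mu(T_k)}(y_k)\cap\gph\Phi_{(k)}\big)$, I would then run the continuation/maximality argument already used in the proof of Theorem~\ref{Prop47}: fixing $x_0,x_1\in\mathbb{B}_{\gamma/(\ell_k+1)}(\ox)$ and $y\in\Phi_{(k)}(x_0)\cap\mathbb{B}_{\gamma/\mu(T_k)}(y_k)$, set $x_\lambda=(1-\lambda)x_0+\lambda x_1$ and let $\bar\lambda$ be the supremum of the $\lambda$ for which $y\in\Phi_{(k)}(x_\lambda)+\ell_k\lambda\|x_0-x_1\|\mathbb{B}$, invoking \cite[Theorem~4.7]{m06} at each traversed point to push $\bar\lambda$ up to $1$. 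The decisive point is that the witnessing $y_\lambda$ stays inside the region where the extracted bound holds: $\|y_\lambda-y_k\|\le\gamma/\mu(T_k)+\ell_k\|x_0-x_1\|\le\gamma/\mu(T_k)+2\gamma$, so choosing $\gamma:=\eta/3$ (uniformly in $k$, using $\mu(T_k)\le\mu(T)=1$) keeps $y_\lambda\in\mathbb{B}_{\eta/\mu(T_k)}(y_k)$ and $x_\lambda\in\mathbb{B}_\eta(\ox)$ throughout, yielding \eqref{eq_definition_Int_locPurely}.

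For the converse I would reverse these steps. From \eqref{eq_definition_Int_locPurely} each $\Phi_{(k)}$ is Lipschitz-like around every graph point in $\mathbb{B}_{\gamma/(\ell_k+1)}(\ox)\times\mathbb{B}_{\gamma/\mu(T_k)}(y_k)$ with modulus $\ell_k$, so the coderivative criterion \cite[Theorem~4.7]{m06} furnishes $\sup\{\|x^\ast\|\mid x^\ast\in D^\ast\Phi_{(k)}(x,y)(y^\ast)\}\le\ell_k\|y^\ast\|$ at each such $(x,y)$. Now set $L:=\|\ell\|_\infty<\infty$ (this is where the hypothesis $\ell\in\Leb^\infty$ is essential) and choose $\eta:=\gamma/(L+1)$. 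Then for any $\x\in\mathbb{B}_\eta(\ox)$, $\y\in\mathbb{B}_\eta(\ooy)\cap\Phi(\x)$, and $\y^\ast\in\Leb^\infty(T,\Y)$, on a.e.\ $t\in T_k$ one has $\|\x(t)-\ox\|\le\|\x-\ox\|_\infty\le\eta\le\gamma/(\ell_k+1)$ and $\|\y(t)-y_k\|\mu(T_k)\le\|\y-\ooy\|_1\le\eta\le\gamma$, i.e.\ $\y(t)\in\mathbb{B}_{\gamma/\mu(T_k)}(y_k)$; hence the pointwise bound applies and delivers exactly \eqref{Int_Lips_like_inq}, so $\Phi$ is integrably quasi-Lipschitzian around $(\ox,\ooy)$ (one may instead argue through the regular coderivative via Proposition~\ref{eq:regular:basic}).

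The hard part will be the forward direction, and specifically the interplay of the three constants: I must produce a \emph{single} $\gamma>0$ and a single $\ell\in\Leb^1$ valid simultaneously for all atoms even though the admissible $y$-radii $\gamma/\mu(T_k)$ blow up as $\mu(T_k)\downarrow 0$. The continuation argument must therefore be quantitatively controlled so that the homotopy never leaves the (atom-dependent) region on which the coderivative estimate was extracted; this is what forces the calibration $\gamma=\eta/3$ and is the genuinely new feature relative to Theorem~\ref{Prop47}, where the absence of a $y$-localization made this control automatic. On the converse side the only real subtlety is that $\ell\in\Leb^\infty$ cannot be dropped: it is precisely what keeps the $x$-radius $\gamma/(\ell_k+1)$ bounded below uniformly in $k$, so that one fixed $\eta$ captures all atoms at once.
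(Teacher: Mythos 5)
Your proposal is correct and follows essentially the same route as the paper's own proof: reduction to constant data $\Phi_k,\ell_k,\ooy(t)$ on each atom, extraction of a pointwise deterministic coderivative bound on the region $\mathbb{B}_\eta(\ox)\times\mathbb{B}_{\eta/\mu(T_k)}(\ooy(t))$ by splicing test functions along $T_k$ (this is exactly the paper's estimate \eqref{equation02}), the calibrated continuation argument of Theorem~\ref{Prop47} for the forward implication, and the reverse bookkeeping in which $\|\ell\|_\infty$ supplies a uniform $x$-radius for the converse. The only adjustment worth making is in the converse: take $\eta$ strictly smaller than $\gamma/(\|\ell\|_\infty+1)$ (the paper uses $\eta<\gamma/(2(\|\ell\|_\infty+1))$ together with half radii), since with $\eta=\gamma/(L+1)$ the points $(\x(t),\y(t))$ can land on the boundary of the region where the Lipschitz-like inclusion was secured, and the criterion of \cite[Theorem~4.7]{m06} requires that property \emph{around} the point; your own parenthetical fallback through regular coderivatives and Proposition~\ref{eq:regular:basic} is precisely how the paper sidesteps this.
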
\vspace*{-0.05in} {\bf Proof}. Suppose that $\Phi$ is integrably quasi-Lipschitzian around $(\ox,\ooy)$ with the given data $\eta>0$ and $\ell\in\Leb^1(T,\R_+)$ for
which \eqref{Int_Lips_like_inq} holds. Since the measure is purely atomic, each measurable function or multifunction must be constant a.e.\ on each atom. Then we can assume that all the involved
mappings are constants over the atoms, i.e., $\Phi_t=\Phi_k$, $\ooy(t)=\oy_k$, and $\ell(t)=\ell_k$ on $T_k$. Furthermore, observe that the coderivative condition
\eqref{Regular:Int_Lips_like_inq} implies that for every $k\in\N$ we have \begin{align}\label{equation02}
\sup\big\{\|x^\ast\|\;\big|\;x^\ast\in\Hat{D}^\ast\Phi_k(x,y)(y^\ast)\big\}\le\ell_k\|y^\ast\| \end{align} whenever $x\in\mathbb{B}_\eta(\ox_k)$,
$y\in\mathbb{B}_{\frac{\eta}{\mu(T_k)}}(\oy_k)\cap\Phi_k(x)$, and $y^\ast\in\Y$. Denote $\gamma:=\frac{\eta}{3(\mu(T)+1)}$ and fix $k\in\N$. Then consider
$x_0,x_1\in\mathbb{B}_{\frac{\gamma}{(\ell_k+1)}}(\ox)$ and $y_k\in\Phi_k(x_0)\cap\mathbb{B}_{\frac{\gamma}{\mu(T_k)}}(\oy_k)$. We claim that \begin{equation}\label{equation000Purely}
y_k\in\Phi_k(x_1)+\ell_k\|x_0-x_1\|\mathbb{B}. \end{equation} Indeed, define $x_\lambda:=(1-\lambda)x_0+\lambda x_1$ for $\lambda\in[0,1]$ and \begin{equation*}
\bar{\lambda}:=\sup\big\{\lambda>0\;\big|\;y_k\in\Phi_k(x_\lambda)+\ell_k\lambda\|x_0-x_1\|\mathbb{B}\big\}. \end{equation*} Applying the coderivative criterion \eqref{equation02} for the
deterministic multifunctions $\Phi_k$, $k\in\N$, we have by \cite[Theorem~4.7]{m06} that $\Phi_k$ is Lipschitz-like around $(x_0,y_k)$. This gives us $\eta_{k}>0$ such that
\begin{equation}\label{equation002Purely} y_k\in\Phi_k(x_0)\cap\mathbb{B}_{\eta_{k}}(y_k)\subset\Phi_k(u)+\ell_k\|x_0-u\|\mathbb{B}\;\text{ for all }\;u\in\mathbb{B}_{\eta_{k}}(x_0).
\end{equation} Plugging there $u=x_\lambda$ for sufficiently small $\lambda>0$ ensures that $\bar{\lambda}>0$. Arguing similarly to the proof of Theorem~\ref{Prop47} verifies that the supremum in
the definition of $\bar{\lambda}$ is realized, i.e., \begin{equation}\label{equation001Purely} \exists y_{\bar{\lambda}}\in\Phi_k(x_{\bar{\lambda}}),\;\exists
v_{\bar{\lambda}}\in\mathbb{B}\;\text{ such that }\;y_k=y_{\bar\lambda}+\ell_k\bar\lambda_k\|x_0-x_1\|v_{\bar\lambda}. \end{equation} Moreover, we have the following estimate of the distance
between $\oy_k$ and $y_{\bar{\lambda}}$ from \eqref{equation001Purely}: \begin{equation*} \|\oy_k-y_{\bar{\lambda}}\|\leq \frac{2}{3\mu(T_k)}\eta. \end{equation*} Using \eqref{equation02} and
applying again \cite[Theorem~4.7]{m06} tell us that each multifunction $\Phi_k$ is Lipschitz-like around $(x_{\bar\lambda},y_{\bar\lambda})$. Thus there exists $\eta_{\bar\lambda}>0$ ensuring the
inclusions \begin{equation}\label{equation003purely}
y_{\bar\lambda}\in\Phi_k(x_{\bar\lambda})\cap\mathbb{B}_{\eta_{\bar\lambda}}(y_{\bar\lambda})\subset\Phi_k(w)+\ell_k\|w-x_{\bar\lambda}\|\mathbb{B}\;\text{ for all
}\;w\in\mathbb{B}_{\eta_{\bar\lambda }}(x_{\bar\lambda}). \end{equation} The latter yields the existence of $\lambda'>\bar\lambda$ such that $x_{\lambda'}\in\mathbb{B}(x_{\bar\lambda })$,
$y_{\lambda'}\in\Phi_k( x_{\lambda'})$, and $v_{\lambda'}\in\mathbb{B}$ for which \begin{equation*} y_{\bar\lambda}\in
y_{\lambda'}+\ell_k\|x_{\lambda'}-x_{\bar\lambda}\|v_{\lambda'}=y_{\lambda'}+\ell_k(\lambda'-\bar\lambda)\|x_1-x_0\|v_{\lambda'}. \end{equation*} Using then \eqref{equation001Purely} and
\eqref{equation003purely} implies that \begin{align*} y&=y_{\bar\lambda}+\ell_k\bar\lambda\|x_0-x_1\|v_{\bar
\lambda}=y_{\lambda'}+\ell_k(\lambda'-\bar\lambda)\|x_1-x_0\|v_{\lambda'}+\ell_k\bar\lambda\|x_0-x_1\|v_{\bar\lambda}\\
&=y_{\lambda'}+\ell_k\lambda'\|x_1-x_0\|\left(\frac{\lambda'-\bar\lambda}{\lambda'}v_{\lambda'}+\frac{\bar\lambda}{\lambda'}v_{\bar\lambda}\right)\in\Phi_k(x_{\lambda'})
+\ell_k\lambda'\|x_0-x_1\|\mathbb{B}, \end{align*} which contradicts the above choice of $\bar\lambda$. This tells us that \eqref{equation000Purely} holds, and thus $\Phi$ satisfies the
integrable Lipschitz-like property \eqref{eq_definition_Int_locPurely}.

Conversely, assume that $\Phi$ satisfies \eqref{eq_definition_Int_locPurely} with some $\ell\in\Leb^\infty(T,\R_+)$. A close look at the proof of \cite[Theorem~1.43]{m06} tells us that
\eqref{eq_definition_Int_locPurely} implies that \begin{equation}\label{Regular:Int_Lips_like_inq:foreachK}
\sup\big\{\|x^\ast\|\;\big|\;x^\ast\in\Hat{D}^\ast\Phi_t(x,y)(y^\ast)\big\}\le\ell_k\|y^\ast\|\;\text{ on }\;T_k \end{equation} whenever $x\in\mathbb{B}_{\frac{\gamma}{2(\ell_k +1)}}(\ox)$,
$y\in\mathbb{B}_{\frac{\gamma}{ 2\mu(T_k) } }(\ooy(t))\cap\Phi_t(x)$, and $y^\ast\in\Y$. Picking now any number $0<\eta< \frac{\gamma}{2(\|\ell\|_\infty +1)}$ together with measurable functions
$\x\in\mathbb{B}_\eta(\ox)$, $\y\in\mathbb{B}_\eta(\ooy)\cap\Phi(\x)$, and $\y^\ast\in\Leb^\infty(T,\Y)$ yields \begin{equation*} \|\y(t)-\ooy(t)\|\le\frac{\gamma}{2\mu(T_k)}\;\mbox{ for a.e.
}\;t\in T_k. \end{equation*} Then the application of \eqref{Regular:Int_Lips_like_inq:foreachK} verifies the integrable quasi-Lipschitzian property of $\Phi$ around $(\ox,\ooy)$.
$\h$\vspace*{0.05in}

Now we discover a remarkable phenomenon concerning Lipschitzian properties of random multifunctions, which does not have any analogs  in the deterministic framework. Namely, it is revealed that
in spaces with {\em nonatomic} measures integrable {\em quasi-Lipschitzian} and {\em local Lipschitzian} properties of random multifunctions {\em agree}. Furthermore, the measure nonatomicity is
also {\em necessary} for the fulfillment of this phenomenon as far as arbitrary multifunctions with these properties are addressed.\vspace*{-0.05in} \begin{theorem}[\bf integrably Lipschitzian
multifunctions on nonatomic spaces]\label{nonatomic:charact} Let $(T,\mathcal{A},\mu)$ be a complete finite measure spaces, and let $\Phi\colon T\times\X\tto\Y$ be a set-valued normal integrand.
Assume that the measure $\mu$ in nonatomic. Then given any $(\ox,\oy)\in\gph\Intfset{\Phi}$ and $\ooy\in\mathcal{S}_{\Phi}(\ox,\oy)$, we have that $\Phi$ is integrably quasi-Lipschitzian around
$(\ox,\ooy)$ if and only if it is integrably locally Lipschitzian around $\ox$. Conversely, the presence of at least one atom on $T$ yields the existence of a multifunction $\Phi\colon
T\times\X\tto\Y$, which is integrably quasi-Lipschitzian around $(\ox,\ooy)$ but not integrably locally Lipschitzian around $\ox$. \end{theorem}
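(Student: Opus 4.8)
The plan is to establish the nonatomic equivalence in two implications and then to exhibit the separating example at an atom. Throughout I read the ball $\B_\eta(\ox)$ for the argument function $\x$ as the pointwise (essential-sup) ball $\{\x\;|\;\x(t)\in\B_\eta(\ox)\text{ a.e.}\}$ and $\B_\eta(\ooy)$ as the $\Leb^1$-ball, the convention that is forced by the bookkeeping in the proof of Theorem~\ref{Prop4700}. The \emph{easy} implication, that integrable local Lipschitzness yields the integrable quasi-Lipschitz property, needs no nonatomicity: fixing $t$ in the full-measure set of \eqref{eq_definition_Int_loc}, the deterministic multifunction $\Phi_t$ is locally Lipschitz on $\B_\eta(\ox)$ with constant $\ell(t)$, so the coderivative criterion \eqref{cod-loclip1} bounds $\sup\{\|x^\ast\|\;|\;x^\ast\in D^\ast\Phi_t(x,y)(y^\ast)\}$ by $\ell(t)\|y^\ast\|$ for every $x\in\B_\eta(\ox)$, $y\in\Phi_t(x)$, and $y^\ast$. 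Evaluating at $x=\x(t)$, $y=\y(t)$, $y^\ast=\y^\ast(t)$ for admissible test functions gives \eqref{Int_Lips_like_inq} a.e.; note that the $\B_\eta(\ooy)$-restriction on $\y$ is not even used here.

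For the \emph{hard} implication I would argue by contradiction through Theorem~\ref{Prop47}. Assume $\Phi$ is integrably quasi-Lipschitzian with data $\eta>0$, $\ell\in\Leb^1(T,\R_+)$, and suppose \eqref{Int_Lips_like_inq_nonatomic} fails with these $\eta,\ell$, i.e.\ the set $A$ of all $t$ admitting $x\in\B_\eta(\ox)$, $y\in\Phi_t(x)$, $y^\ast$ with $\|y^\ast\|\le1$ and some $x^\ast\in\Hat D^\ast\Phi_t(x,y)(y^\ast)$ satisfying $\|x^\ast\|>\ell(t)\|y^\ast\|$ has positive measure. Using the graph measurability of $t\mapsto\gph\Hat D^\ast\Phi_t$ from Proposition~\ref{lemma_measurability_reg_sub}(iii) together with the selection Proposition~\ref{Prop_measurableselection}, I would choose measurable witnesses $x(t),y(t),y^\ast(t),x^\ast(t)$ over $A$ with $\|y^\ast(t)\|\le1$ and $\|x^\ast(t)\|>\ell(t)\|y^\ast(t)\|$; since $\|y(t)-\ooy(t)\|<\infty$ a.e., the truncations $A_M:=\{t\in A\;|\;\|y(t)-\ooy(t)\|\le M\}$ exhaust $A$, so $\mu(A_M)>0$ for some $M$. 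By nonatomicity I can then pick $A'\subset A_M$ with $0<\mu(A')<\eta/(M+1)$ and glue to the reference selection via $\x:=x\1_{A'}+\ox\1_{(A')^c}$, $\y:=y\1_{A'}+\ooy\1_{(A')^c}$, $\y^\ast:=y^\ast\1_{A'}$. By construction $\x(t)\in\B_\eta(\ox)$ a.e.; $\y(t)\in\Phi_t(\x(t))$ a.e.\ (on $A'$ because $y\in\Phi_t(x)$, off $A'$ because $\ooy\in\mathcal{S}_\Phi(\ox,\oy)$ forces $\ooy(t)\in\Phi_t(\ox)$ through \eqref{mapping:SPHI}); $\|\y-\ooy\|_1\le M\mu(A')<\eta$; and $\y^\ast\in\Leb^\infty(T,\Y)$. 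Thus $(\x,\y,\y^\ast)$ is admissible in \eqref{Int_Lips_like_inq}, and because $\Hat D^\ast\subset D^\ast$ it forces $\|x^\ast(t)\|\le\ell(t)\|y^\ast(t)\|$ for a.e.\ $t\in A'$, contradicting the strict reverse inequality holding on all of $A'$. Hence \eqref{Int_Lips_like_inq_nonatomic} holds with the same $\eta,\ell$, and Theorem~\ref{Prop47} delivers integrable local Lipschitzness.

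The decisive use of nonatomicity is exactly the selection of $A'$ of arbitrarily small positive measure carrying the entire violation: concentrating the defect onto a tiny set keeps $\y$ inside the $\Leb^1$-ball while still contradicting quasi-Lipschitzness on a set of positive measure. I expect the main obstacle to be the measurability bookkeeping---verifying that $A$ is measurable and that the witnesses $x(t),y(t),y^\ast(t),x^\ast(t)$ can be chosen measurably (via a Castaing representation of $t\mapsto\gph\Hat D^\ast\Phi_t$ and projection)---together with keeping the pointwise ball for $\x$ and the $\Leb^1$-ball for $\y$ cleanly separated.

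For the converse (necessity of nonatomicity) I would take any atom $T_0\in\mathcal{A}$ with $a:=\mu(T_0)>0$ and build $\Phi\colon T\times\R\tto\R$ (so $n=m=1$) that is constant in $t$ on $T_0$ and trivial elsewhere. Set $\Phi_t:=\Phi_0$ on $T_0$ with $\gph\Phi_0:=(\R\times\{0\})\cup\{(x,1/x)\;|\;x\ne0\}$ and $\Phi_t:=\{0\}$ off $T_0$; this is a closed-graph set-valued normal integrand, and with $\ox:=0$, $\ooy:\equiv0$, $\oy:=0$ one has $(\ox,\oy)\in\gph\Intfset{\Phi}$ and $\ooy\in\mathcal{S}_\Phi(\ox,\oy)$. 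For small $\eta$ the $\Leb^1$-ball constraint forces any admissible $\y$ to obey $|\y(t)|\le\eta/a$ on $T_0$, so the pertinent selection lies on the flat branch $y=0$, where the graph is locally $\{(x,0)\}$ and the coderivative vanishes; hence \eqref{Int_Lips_like_inq} holds and $\Phi$ is integrably quasi-Lipschitzian around $(\ox,\ooy)$. Along the hyperbolic branch, however, the smooth selection $y=1/x$ gives by \eqref{cod-smooth} that $\Hat D^\ast\Phi_0(x,1/x)(y^\ast)=\{-x^{-2}y^\ast\}$, so $\sup\{\|x^\ast\|\;|\;x^\ast\in\Hat D^\ast\Phi_0(x,y)(y^\ast)\}$ cannot be dominated by any $\ell(t)\|y^\ast\|$ as $x\to0$ within $\B_{\eta'}(0)$; by Theorem~\ref{Prop47} this rules out the integrable local Lipschitz property for every radius $\eta'>0$, completing the separation.
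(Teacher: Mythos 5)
Your proof of the nonatomic equivalence is correct and follows essentially the same route as the paper's: the easy implication via the deterministic coderivative criterion, and the hard one by contradiction through Theorem~\ref{Prop47}, choosing measurable violation witnesses via Propositions~\ref{lemma_measurability_reg_sub} and \ref{Prop_measurableselection} and using nonatomicity to concentrate the violation on a set of small measure so that the glued pair $(\x,\y)$ remains admissible for \eqref{Int_Lips_like_inq}. Two of your refinements are in fact sharper than the paper's own write-up: the normalization $\|y^\ast(t)\|\le 1$ (needed so that $\y^\ast\in\Leb^\infty(T,\Y)$, which the paper leaves implicit), and the truncation $A_M:=\{t\in A\;|\;\|y(t)-\ooy(t)\|\le M\}$ before shrinking by nonatomicity --- the paper simply asserts the existence of $B\subset A$ with $\mu(B)>0$ and $\int_B\|\y(t)-\ooy(t)\|d\mu\le\eta$, which requires exactly this truncation since $\|\y-\ooy\|$ need not be integrable over $A$. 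Where you genuinely diverge is the counterexample at an atom $T_0$ with $a:=\mu(T_0)>0$: the paper glues an arbitrary locally Lipschitzian $F_1$ on $T\setminus T_0$ with $F_2(x)=\{|x|\}$, $F_2(0)=\{0,1\}$ on the atom, and then invokes Theorems~\ref{Prop47} and \ref{Prop4700} to obtain the quasi-Lipschitz property; you instead place the line-plus-hyperbola graph $(\R\times\{0\})\cup\{(x,1/x)\;|\;x\ne 0\}$ on the atom and verify quasi-Lipschitzness directly, observing that the $\Leb^1$-ball constraint forces $|\y(t)|\le\eta/a$ on the atom, which excludes the hyperbolic branch once $\eta<\sqrt{a}$, so every admissible selection sits on the flat branch where the coderivative vanishes. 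Both examples exploit the same mechanism (a Lipschitz-like but not locally Lipschitzian deterministic mapping placed on an atom); yours is self-contained and makes the role of the $\Leb^1$-localization transparent, whereas the paper's is shorter but leans on Theorem~\ref{Prop4700}.
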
\vspace*{-0.05in} {\bf Proof}. It follows from the
equivalent descriptions of the integrably locally Lipschitzian and quasi-Lipschitzian properties in Theorem~\ref{Prop47} and Proposition~\ref{eq:regular:basic}, respectively, that the former
property implies the latter one without the nonatomicity assumption. Assume now that $\Phi$ is integrably quasi-Lipschitzian around the point in question. Arguing by contraposition, suppose that
$\Phi$ is integrably locally Lipschitzian around $\ox$, i.e., the coderivative condition \eqref{Int_Lips_like_inq_nonatomic} in Theorem~\ref{Prop47} fails. This gives us a set $A\in\mathcal{A}$
with $\mu(A)>0$ such that for all $t\in A$ there exist points $x_t\in\mathbb{B}_\eta(\ox)$, $y_t^\ast\in\Y$, $y_t\in\Phi_t(x_t)$, and $x_t^\ast\in\Hat{D}^\ast \Phi_t(x_t,y_t)(y_t^\ast)$ with
$\|x^\ast_t\|>\ell(t)\|y_t^\ast\|$. Now define the multifunction $F\colon A\tto\Y$ by \begin{align*} \gph F:=\left\{(t,x,y,x^\ast,y^\ast)\in T\times \mathbb{R}^{2(n+m)}\;\Bigg|\;\begin{array}{ll}
x\in\mathbb{B}_\eta(\ox),\;(t,x,y)\in\gph\Phi\\ (x^\ast,-y^\ast)\in\Hat N_{\gph\Phi_t}(x,y),\\\text{and }\;\|x^\ast\|>\ell(t)\|y^\ast\| \end{array}\right\}. \end{align*} Observe that $F$ is graph
measurable with nonempty (not necessary closed) values. The measurable selection theorem presented in Proposition~\ref{Prop_measurableselection} gives us measurable functions $\x,\x^\ast\colon
T\to\X$ and $\y,\y^\ast\colon T\to\Y$ such that for almost all $t\in A$ we get \begin{equation}\label{eq_01}
\x(t)\in\mathbb{B}_\eta(\ox),\;\x^\ast(t)\in\Hat{D}^\ast\Phi_t\big(\x(t),\y(t)\big)\big(\y^\ast(t)\big)\;\mbox{ and }\;\|\x^\ast(t)\|>\ell(t)\|\y^\ast(t)\|. \end{equation} The nonatomicity of
$\mu$ allows us to find a measurable set $B\subset A$ with $\mu(B)>0$ and $\int_B\|\y(t)-\ooy(t)\|d\mu\le\eta$. Hence the measurable functions $\v:=\x\1_{B}+\ox\1_{B^c}$ and
$\w:=\y\1_{B}+\ooy\1_{B^c}$ belong to the balls $\mathbb{B}_\eta(\ox)$ and $\mathbb{B}_\eta(\ooy)$, respectively. Employing now \eqref{eq_01} with taking into account
Proposition~\ref{eq:regular:basic}, we arrive at a contradiction with \eqref{Int_Lips_like_inq} and thus verify the assertion of the theorem in nonatomic spaces.

To justify the converse assertion, suppose that $(T,\mathcal{A},\mu)$ contains at least one atom, say $T_0\in\mathcal{A}$. Taking into account the representation of the measure space as the union
of purely atomic and nonatomic parts as discussed in Section~\ref{sec2a}, we assume without loss of generality that the space $T\backslash T_0$ is nonatomic; see also the proof of
Proposition~\ref{suff} for more details in similar arguments. Consider two arbitrary deterministic multifunctions $F_1,F_2\colon\X\tto\Y$ such that $F_1$ is locally Lipschitzian around some $\bar
x$, while $F_2$ is Lipschitz-like around $(\bar x,\bar y)$ with some $\oy\in F_2(\ox)$ but not being locally Lipschitzian around $\ox$. Specifically $F_2$ can be constructed as $F_2(x)=|x|$ on
$\R$ with the additional value $F(0)=1$, while $(\ox,\oy)=(0,0)$. Define now the random multifunction $\Phi\colon T\times\X\tto\Y$ by \begin{align}\label{phi} \Phi_t(x):=\left\{\begin{array}{ll}
F_1(x)&\text{ if }\;t\in T\backslash T_0,\\ F_2(x)&\text{ if }\;t\in T_0\\ \end{array}\right. \end{align} and construct a measurable selection $\ooy(t)\in\Phi_t(x)$ by
$\ooy(t):=y_1\1_{T\backslash T_0}+\bar y_1\1_{T_0}$ with some $y_1\in F_1(x)$. It follows from Theorem~\ref{Prop47} and from the first part of this theorem applied on the nonatomic part
$T\backslash T_0$ that the multifunction $\Phi$ from \eqref{phi} is integrably quasi-Lipschitzian around $(\ox,\ooy)$ when $t\in T\backslash T_0$. This property of $\Phi$ on the purely atomic
part $T_0$ follows from Theorem~\ref{Prop4700} since $F_2$ is a deterministic Lipschitz-like multifunction. However, the random multifunction $\Phi$ cannot be integrably locally Lipschitzian
around $\ox$ since $F_2$ in \eqref{phi} was chosen to be merely Lipschitz-like around the point in question. This therefore completes the proof of the theorem. $\h$\vspace*{0.05in}

It is easy to deduce from Theorem~\ref{nonatomic:charact} that the integrably Lipschitz-like and quasi-Lipschitzian properties are {\em different} in any {\em nonatomic} space. Indeed,
Theorem~\ref{nonatomic:charact} tells us that in such spaces the latter property is equivalent to its locally Lipschitzian counterpart defined by inclusion \eqref{eq_definition_Int_loc}, which
may clearly be different from the integrable Lipschitz-like property \eqref{eq_definition_Int_loc} whenever the sets $\Phi_t(x)$ are unbounded. The following simple example demonstrates the
difference between these two Lipschitzian properties of random set-valued mappings even in the case of {\em bounded} multifunctions.\vspace*{-0.05in} \begin{example} $(${\em quasi-Lipschitzian
versus Lipschitz-like random multifunctions$)$}\label{versus} Consider any nonatomic probability space $(T,\mathcal{A},\mu)$ and the constant mapping $\Phi\colon T\times\R\tto\R_+$ defined by
$\Phi(t,x):=[0,\sqrt{|x|}+1]$. Then we have $\Intf{\Phi}(x)=[0,\sqrt{|x|}+1]$ with $(\ox,\oy)=(0,0)\in\gph\Intfset{\Phi}$ and the constant function $\ooy(t)=0$ belongs to
$\mathcal{S}_{\Phi}(\ox,\oy)$ from \eqref{mapping:SPHI}. It is easy to see that $\Intf{\Phi}(x)$ is Lipschitz-like around $(\ox,\oy)$. However, $\Phi$ cannot be quasi-Lipschitzian around this
point, because otherwise it follows from Theorem~\ref{Prop47} and \ref{nonatomic:charact} that $\Phi$ satisfies \eqref{eq_definition_Int_loc}. The latter leads us to a contradiction, since it
would imply, in particular, that the function $\ph(x):= \sqrt{|x|}$ is locally Lipschitzian around $\ox=0$. \end{example}

Similarly to the deterministic case, the integrable Lipschitz-like property brings us to conclusions about {\em Lipschitz stability} of feasible solution sets in stochastic programming, while the
integrable quasi-Lipschitzian property plays a crucial role in deriving the most useful {\em pointwise Leibniz-type rules} for coderivatives and second-order subdifferentials that are established
in the subsequent sections.\vspace*{0.03in}

The final result of this section provides efficient conditions ensuring the integral quasi-Lipschitzian property of set-valued normal integrands defined on general measure spaces that are
important, in particular, to check the qualification conditions in the calculus rules derived below.\vspace*{-0.05in} \begin{proposition}[\bf sufficient conditions for integrably
quasi-Lipschitzian multifunctions]\label{suff} Let $(T,\mathcal{A},\mu)$ be a complete finite measure space, and let $\Phi\colon T\times\X\tto\Y$ be a set-valued normal integrand. Take
$(\ox,\oy)\in\gph\Intfset{\Phi}$ and $\ooy\in\mathcal{S}_{\Phi}(\ox,\oy)$ and suppose that there exist $\epsilon,\ell_{pa}>0$ as well as $\gamma_{pa}>0$ and $\ell_{na}\in\Leb^1(T,\mathbb{R_+})$
such that the following inclusions hold for all $x,x'\in\mathbb{B}_\epsilon(\ox)$: \begin{equation}\label{inclusion01} \begin{array}{ll}
\Phi_t(x)\cap\mathbb{B}_{\frac{\gamma_{pa}}{\mu(T_k)}}\big(\oy(t)\big)\subset\Phi_t(x')+\ell_{pa}\|x-x'\|\mathbb{B}\;\text{ if }\;t\in T^k_{pa},\\
\Phi_t(x)\subset\Phi_t(x')+\ell_{na}(t)\|x-x'\|\mathbb{B}\;\text{ if }\;t\in T_{na}, \end{array} \end{equation} where $T_{pa}$ and $T_{na}$ form a disjoint decomposition of $T$ into purely atomic
and nonatomic parts with $\{T_{pa}^k\}_{k\in\N}$ being a countable disjoint family of atoms. Then $\Phi$ is quasi-Lipschitzian around $(\ox,\ooy)$. \end{proposition}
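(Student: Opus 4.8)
The plan is to exploit the canonical decomposition $T=T_{pa}\sqcup T_{na}$ into purely atomic and nonatomic parts recalled in Section~\ref{sec2a}, to verify the integrable quasi-Lipschitzian property on each part separately by invoking the results already proved above, and then to paste the two conclusions. By Proposition~\ref{eq:regular:basic} it is enough to establish the regular-coderivative estimate \eqref{Regular:Int_Lips_like_inq}, which has the decisive advantage of being a \emph{pointwise} (a.e.\ in $t$) bound: the only globally coupled ingredients are the admissibility requirements $\x\in\mathbb{B}_\eta(\ox)$, $\y\in\mathbb{B}_\eta(\ooy)\cap\Phi(\x)$, and $\y^\ast\in\Leb^\infty(T,\Y)$. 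The elementary but crucial remark underlying the pasting is that these requirements are inherited under restriction to a measurable subset; in particular $\|\y-\ooy\|_1\le\eta$ over $T$ forces $\|\y-\ooy\|_1\le\eta$ over each of $T_{pa}$ and $T_{na}$, while the inclusion $\y(t)\in\Phi_t(\x(t))$ localizes trivially.

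On the nonatomic part I would read the second line of \eqref{inclusion01} as the statement that $\Phi$, considered on the complete finite \emph{nonatomic} space $T_{na}$, is integrably locally Lipschitzian around $\ox$ in the sense of Definition~\ref{integ-lip} with modulus $\ell_{na}$. Theorem~\ref{nonatomic:charact}, applied on $T_{na}$, then promotes this to the integrable quasi-Lipschitzian property there, so that for every admissible triple the coderivative bound \eqref{Regular:Int_Lips_like_inq} holds a.e.\ on $T_{na}$ with $\ell(t)=\ell_{na}(t)$.

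On the purely atomic part I would match the first line of \eqref{inclusion01} to the hypothesis of the converse implication in Theorem~\ref{Prop4700}. Since shrinking $\gamma_{pa}$ only intersects the left-hand side with a smaller ball around $\ooy(t)$ and hence cannot destroy the inclusion, I may assume $\gamma_{pa}/(\ell_{pa}+1)\le\epsilon$; then $\mathbb{B}_{\gamma_{pa}/(\ell_{pa}+1)}(\ox)\subset\mathbb{B}_\epsilon(\ox)$ and the first line of \eqref{inclusion01} becomes exactly the integrable Lipschitz-like inclusion \eqref{eq_definition_Int_locPurely} on the purely atomic space $T_{pa}$, carried by the \emph{constant} --- hence $\Leb^\infty$ --- modulus $\ell_{pa}$. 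Theorem~\ref{Prop4700} then yields the integrable quasi-Lipschitzian property on $T_{pa}$, and after restriction the bound \eqref{Regular:Int_Lips_like_inq} holds a.e.\ on $T_{pa}$ with $\ell(t)=\ell_{pa}$.

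It remains to glue the pieces: setting $\ell:=\ell_{na}\1_{T_{na}}+\ell_{pa}\1_{T_{pa}}$, which belongs to $\Leb^1(T,\R_+)$ because $\mu$ is finite and $\ell_{na}\in\Leb^1$, and taking $\eta$ to be the smaller of the two radii furnished above, the two pointwise bounds combine to \eqref{Regular:Int_Lips_like_inq} a.e.\ on all of $T$, which is the desired assertion. I expect the genuine obstacle to be the reconciliation, on the atoms, of the \emph{global} $\Leb^1$-ball constraint $\|\y-\ooy\|_1\le\eta$ with the \emph{local}, $\mu(T_{pa}^k)^{-1}$-scaled balls appearing in \eqref{inclusion01} and \eqref{eq_definition_Int_locPurely}. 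The resolving observation is that any measurable function is a.e.\ constant on each atom $T_{pa}^k$, so $\|\y-\ooy\|_1\le\eta$ forces $\|\y(t)-\ooy(t)\|\le\eta/\mu(T_{pa}^k)$ on $T_{pa}^k$ --- precisely the scaling under which the atomic Lipschitz-like hypothesis, and therefore Theorem~\ref{Prop4700}, is engineered to absorb the constraint.
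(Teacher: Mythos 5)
Your proposal is correct and follows essentially the same route as the paper's own proof: decompose $T$ into the nonatomic and purely atomic parts, obtain the quasi-Lipschitzian estimate on $T_{na}$ from Theorem~\ref{nonatomic:charact} and on $T_{pa}$ from the converse implication of Theorem~\ref{Prop4700}, and glue the two via $\ell:=\ell_{na}\1_{T_{na}}+\ell_{pa}\1_{T_{pa}}$ and the minimum of the two radii, observing that the $\Leb^1$-ball constraint on $\y$ localizes to each part. Your explicit treatment of the radius matching (shrinking $\gamma_{pa}$ so that $\gamma_{pa}/(\ell_{pa}+1)\le\epsilon$) and of the atom-scaling $\|\y(t)-\ooy(t)\|\le\eta/\mu(T_{pa}^k)$ is in fact more careful than the paper's rather terse argument.
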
\vspace*{-0.05in} {\bf Proof}.
Let $\mu_{pa}(\cdot)$ and $\mu_{na}(\cdot)$ be the purely atomic and nonatomic part of $\mu$, i.e., $\mu_{pa}(\cdot):=\mu(\cdot\cap T_{pa})$ and $\mu_{na}(\cdot):=\mu(\cdot\cap T_{na})$, ,
respectively. Using the second inclusion in \eqref{inclusion01}, we apply Theorems~\ref{Prop47} and \ref{nonatomic:charact} to the multifunction $\Phi$ on the nonatomic measure space
$(T_{na},\mathcal{A},\mu_{na})$ and thus find $\eta_{na}>0$ and $\ell_{na}\in\Leb^1(T,\R_+)$ such that \eqref{Int_Lips_like_inq} holds. On the other hand, the first inclusion in
\eqref{inclusion01} allows us to apply Theorem~\ref{Prop4700} to $\Phi$ on the purely atomic measure space $(T_{pa},\mathcal{A},\mu_{pa})$ and hence get $\eta_{pa}>0$ and $\ell_{pa}\in
\Leb^1(T,\R_+)$ such that \eqref{Int_Lips_like_inq} holds. Now we claim that $\Phi$ satisfies \eqref{Int_Lips_like_inq} with $\eta:=\min\{\eta_{na},\eta_{pa}\}$ and
$\ell(t):=\ell_{na}(t)\1_{T_{na}}(t)+\ell_{pa}(t)\1_{T_{pa}}(t)$ on the entire measure space $(T,\mathcal{A},\mu)$. Indeed, picking any $\x\in\mathbb{B}_\eta(\ox)$,
$\y\in\mathbb{B}_\eta(\ooy)\cap\Phi(\x)$, and $\y^\ast\in\Leb^\infty(T,\Y)$ leads us to the estimate \begin{equation*}
\int_{T}\|\y(t)-\ooy(t)\|d\mu_{na}+\int_{T}\|\y(t)-\ooy(t)\|d\mu_{pa}=\int_{T}\|\y(t)-\ooy(t)\|d\mu\le\eta, \end{equation*} which yields \eqref{Int_Lips_like_inq} for the selected triple
$(\x,\y,\y^\ast)$ and thus completes the proof of the proposition. $\h$\vspace*{-0.2in}

\section{Coderivative Leibniz Rules for Expected-Integral Mappings}\label{sec4}\sce\vspace*{-0.1in}

In this section we establish several calculus rules to evaluate coderivatives of the expected-integral multifunctions $\Intfset{\Phi}\colon\X\tto\Y$ given by
\begin{equation}\label{def:set-valued:Exp*}
\Intfset{\Phi}(x):=\int_{T}\Phi_t(x)d\mu\;\mbox{ for all }\;x\in\X,
\end{equation}
where $\Phi\colon T\times\X\tto\Y$ is a set-valued normal integrand defined on a complete finite measure space $(T,\mathcal{A},\mu)$. Various results, unified under the name of {\em coderivative Leibniz
rules}, are obtained to evaluate the regular and limiting coderivatives of \eqref{def:set-valued:Exp*} via the corresponding coderivatives of the integrand multifunctions $\Phi_t$. The most efficient
and useful {\em pointwise} rules will be obtained to evaluate the limiting coderivative of $\Intfset{\Phi}$ at the given point $(\ox,\oy)$ in terms of the limiting coderivative of $\Phi_t$ under the
integrable {\em quasi-Lipschitzian} property of the integrand multifunctions around the corresponding points.\vspace*{0.05in}

Throughout this section we assume that at the given point of interest $\ox\in\dom\Intfset{\Phi}$ there exist a positive number $\rho$ and a function $\kappa\in\Leb^1(T,\R_+)$ such that the following
conditions are satisfied:
\begin{equation}\label{convex_cond_set}
\begin{aligned}
\Phi_t(x)&\text{ is convex for all }\;x\in\mathbb{B}_\rho(\ox)\;\text{ and a.e. }\;t\in T_{na},\\
\Phi_t(x)&\subset\kappa(t)\mathbb{B}\;\text{ for all }\;x\in\mathbb{B}_\rho(\ox)\;\text{ and a.e. }\;t\in T,
\end{aligned}
\end{equation}
where the second condition is known as {\em integrable boundedness}.\vspace*{0.05in}

We start with deriving {\em sequential} Leibniz-type rules to estimate the {\em regular} coderivative of $\Intfset{\Phi}$ via sequences of regular coderivatives of $\Phi_t$ at points nearby.
These results are of the same flavor as the sequential subdifferential Leibniz rules for expected-integral functionals \eqref{eif} that are presented in Theorems~\ref{theoremsubdiferential} and
\ref{theorem_main_fuzzy_sub} and are used in the proofs below. Since the proofs of the following two propositions are similar to each other, we prove only the second one.\vspace*{-0.05in}
\begin{proposition}[\bf sequential coderivative Leibniz rule, I]\label{main_fuzzy_cod2} Let $\Phi\colon T\times\X\tto\Y$ be a set-valued normal integrand defined on a complete finite measure
space $(T,\mathcal{A},\mu)$, let $\ox\in\dom\Intfset{\Phi}$ satisfy the conditions in \eqref{convex_cond_set}, and let $\ooy\in\mathcal{S}_{\Phi}(\ox,\oy)$ be taken from \eqref{mapping:SPHI}.
Then for any fixed $p,q\in(1,\infty)$ with $1/p+1/q=1$ there exist sequences $\{x_k\}\subset\X$, $\{\x_k\}\subset\Leb^p({T},\X)$, $\{\x_k^*\}\subset{\Leb}^q({T},\X)$,
$\{\y_k\}\subset\Leb^1(T,\Y)$, and $\{\y_k^\ast\}\subset\Leb^\infty(T,\Y)$ such that we have the assertions:\vspace*{-0.05in} \begin{enumerate}[label=\alph*),ref=\alph*)] \item[\bf(i)]
$\x_k^*(t)\in\Hat{D}^\ast\Phi_t\big(\x_k(t),\y_k(t)\big)\big(\y_k^\ast(t)\big)$ for a.e.\ $t\in T$ and all $k\in\N$. \item[\bf(ii)] $\|\ox-x_k\|\to 0$, $\|\ox-\x_k\|_p \to 0$, and
$\|\ooy-\y_k\|_1\to 0$ as $k\to\infty$. \item[\bf(iii)] $\| \y_k^\ast-\ooy^\ast\|_\infty\to 0$, $\|\x_k^*\|_q\|\x_k-x_k\|_p\to 0$, and $\disp\int_T x_k^*(t)d\mu\to\ox^\ast$ as $k\to\infty$.
\end{enumerate} \end{proposition}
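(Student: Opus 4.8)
The plan is to deduce this sequential coderivative rule from the sequential subdifferential Leibniz rule of Theorem~\ref{theoremsubdiferential}, applied to the indicator normal integrand attached to the graphs of $\Phi_t$. Concretely, I would set $\varphi_t(x,y):=\delta_{\gph\Phi_t}(x,y)$ and consider the expected-integral functional $\Intf{\varphi}(x,\y)=\int_T\delta_{\gph\Phi_t}(x,\y(t))\,d\mu$, whose effective domain is exactly $\{(x,\y)\mid\y(t)\in\Phi_t(x)\text{ for a.e.\ }t\}$ and on which $\Intf{\varphi}$ is identically $0$. Since $\Phi$ is a set-valued normal integrand, $t\mapsto\gph\Phi_t$ is measurable with closed values, so $\varphi$ is a proper normal integrand; it satisfies the uniform lower bound \eqref{lower_bound_assump} with $\nu\equiv0$ because $\varphi_t\ge0$, and the first line of \eqref{convex_cond_set} makes $\varphi_t(v,\cdot)=\delta_{\Phi_t(v)}$ convex for $v\in\mathbb{B}_\rho(\ox)$ and a.e.\ $t\in T_{na}$, which is precisely the convexity requirement \eqref{convex_cond}. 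Finally $(\ox,\ooy)\in\dom\Intf{\varphi}$ because $\ooy\in\mathcal{S}_{\Phi}(\ox,\oy)$.

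The key reduction is to convert a regular coderivative element $\ox^\ast\in\Hat D^\ast\Intfset{\Phi}(\ox,\oy)(\oy^\ast)$ (the datum underlying $\ox^\ast$ and $\ooy^\ast$ in the conclusion), i.e.\ $(\ox^\ast,-\oy^\ast)\in\Hat N\big((\ox,\oy);\gph\Intfset{\Phi}\big)$, into a regular subgradient of $\Intf{\varphi}$. For this I would introduce the bounded linear averaging map $G(x,\y):=\big(x,\int_T\y\,d\mu\big)$ from $\X\times\Leb^1(T,\Y)$ to $\X\times\Y$, whose adjoint carries $(x^\ast,y^\ast)$ to the pair $(x^\ast,\tilde y^\ast)$ with the constant function $\tilde y^\ast(t)\equiv y^\ast$. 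The Aumann-integral description \eqref{aum} gives $\gph\Intfset{\Phi}=G(\dom\Intf{\varphi})$ and $G(\ox,\ooy)=(\ox,\oy)$, so the standard pre-image inclusion for Fr\'echet normals under a bounded linear map yields $G^\ast\,\Hat N\big((\ox,\oy);\gph\Intfset{\Phi}\big)\subset\Hat N\big((\ox,\ooy);\dom\Intf{\varphi}\big)=\Hat\partial\Intf{\varphi}(\ox,\ooy)$. Applying this to $(\ox^\ast,-\oy^\ast)$ produces $(\ox^\ast,\ooy^\ast)\in\Hat\partial\Intf{\varphi}(\ox,\ooy)$ with $\ooy^\ast\equiv-\oy^\ast$, which is exactly the input demanded by Theorem~\ref{theoremsubdiferential}.

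With this input secured, I would invoke Theorem~\ref{theoremsubdiferential} for the prescribed exponents $p,q$ to obtain the five sequences satisfying its items (i)--(v), and then read them back in coderivative form. Item~(i) states $(\x_k^\ast(t),\y_k^\ast(t))\in\Hat\partial\varphi_t(\x_k(t),\y_k(t))=\Hat N\big((\x_k(t),\y_k(t));\gph\Phi_t\big)$, which by definition \eqref{rcod} means $\x_k^\ast(t)\in\Hat D^\ast\Phi_t(\x_k(t),\y_k(t))(-\y_k^\ast(t))$; renaming $-\y_k^\ast$ as the new dual sequence turns this into the asserted inclusion (i), while item~(v) of the theorem becomes $\|\y_k^\ast-\ooy^\ast\|_\infty\to0$ with $\ooy^\ast$ the constant $\oy^\ast$ in assertion (iii). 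The convergences in (ii) and the two remaining ones in (iii) transfer verbatim from items (ii)--(iii) of Theorem~\ref{theoremsubdiferential}, and its item~(iv) is automatically fulfilled here since the indicator integrand vanishes along all feasible points. The dependence on $p,q$ is inherited directly, as the averaging map enters only through the $\Leb^1$--$\Leb^\infty$ pairing in the $\y$-variable.

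The step I expect to be the \emph{main obstacle} is verifying the integrability hypothesis of Theorem~\ref{theoremsubdiferential}, namely that $t\mapsto\inf_{\X\times\Y}\{\varphi_t(\cdot,\cdot)-\la\ooy^\ast(t),\cdot\ra\}$ is summable. For the indicator integrand this infimum equals $\inf\{\la\oy^\ast,y\ra\mid y\in\Phi_t(x),\,x\in\X\}$, which may well be $-\infty$ in the absence of a global bound. This is exactly where the \emph{integrable boundedness} $\Phi_t(x)\subset\kappa(t)\mathbb{B}$ on $\mathbb{B}_\rho(\ox)$ from \eqref{convex_cond_set} is indispensable: replacing $\varphi_t$ by $\varphi_t(x,y)+\delta_{\mathbb{B}_\rho(\ox)}(x)$ localizes the infimum to $x\in\mathbb{B}_\rho(\ox)$ and sandwiches it between the summable functions $-\|\oy^\ast\|\kappa(t)$ and $\la\oy^\ast,\ooy(t)\ra$, joint measurability from Proposition~\ref{lemma_measurability_reg_sub} then giving integrability. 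Care is needed to perform this truncation with an interior (slightly smaller) ball, so that the approximating points $\x_k(t)$ land where the truncated and original integrands coincide and the selections $\x_k^\ast(t)$ remain genuine regular coderivative elements of $\Phi_t$ rather than of its restriction.
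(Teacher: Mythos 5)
Your overall strategy is the paper's own: the paper proves only the companion Proposition~\ref{main_fuzzy_cod} (indicator integrand $\varphi_t=\delta_{\gph\Phi_t}$, transfer of the coderivative element into $\Hat\partial\Intf{\varphi}(\ox,\ooy)$, then the sequential subdifferential rule), and simply declares the present statement ``similar'' with Theorem~\ref{theoremsubdiferential} replacing Theorem~\ref{theorem_main_fuzzy_sub}. Your transfer step, pushing $(\ox^\ast,-\oy^\ast)$ through the adjoint of the averaging map $G(x,\y)=\big(x,\int_T\y\,d\mu\big)$ and using $\gph\Intfset{\Phi}=G(\dom\Intf{\varphi})$, is correct and is a legitimately more elementary alternative to the paper's mechanism (smooth variational description of regular subgradients, Fermat rule, sum rule); both yield $(\ox^\ast,\ooy^\ast)\in\Hat\partial\Intf{\varphi}(\ox,\ooy)$ with $\ooy^\ast$ constant. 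You also correctly spotted what the paper passes over in silence: Theorem~\ref{theoremsubdiferential}, unlike Theorem~\ref{theorem_main_fuzzy_sub}, demands integrability of the infimum over \emph{all} of $\X\times\Y$, and for the indicator integrand this infimum is $-\infty$ whenever $\Phi_t$ is unbounded outside $\mathbb{B}_\rho(\ox)$, so some localization is unavoidable.

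The genuine gap is in your repair of that last point. Truncating to $\tilde\varphi_t(x,y)=\varphi_t(x,y)+\delta_{\mathbb{B}_{\rho'}(\ox)}(x)$ with $\rho'<\rho$ does secure the integrability hypothesis, but the sequences that Theorem~\ref{theoremsubdiferential} returns for $\tilde\varphi$ satisfy only $\|\ox-\x_k\|_p\to 0$, which does \emph{not} prevent $\x_k(t)$ from sitting on the sphere $\{\|x-\ox\|=\rho'\}$ for $t$ in a set $A_k$ of positive (though vanishing) measure. For such $t$ one only knows $(\x_k^\ast(t),\y_k^\ast(t))\in\Hat N\big((\x_k(t),\y_k(t));\gph\Phi_t\cap(\mathbb{B}_{\rho'}(\ox)\times\Y)\big)$, which is in general strictly larger than $\Hat N\big((\x_k(t),\y_k(t));\gph\Phi_t\big)$: already for the admissible constant integrand $\Phi_t(x)=\kappa(t)\mathbb{B}$ the first cone contains the ray $\R_+(\x_k(t)-\ox)\times\{0\}$ while the second is $\{(0,0)\}$ at interior $y$. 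So assertion (i) can fail on $A_k$, and taking the ball ``slightly smaller'' does not help, since the same phenomenon occurs at whatever radius you truncate. Nor can you discard or patch the quadruples on $A_k$: the only regular normal you know at a fallback point such as $(\ox,\ooy(t))$ is $(0,0)$, and inserting it forces $\|\y_k^\ast-\ooy^\ast\|_\infty\ge\|\oy^\ast\|$ on a positive-measure set, destroying the $\Leb^\infty$-convergence required in (iii); smooth-penalty variants fail for a parallel reason, because a $C^1$ penalty blowing up at the sphere has a gradient that the available $\Leb^1$ bound on its values cannot control in $\Leb^q$. Closing the gap requires a genuinely localized version of Theorem~\ref{theoremsubdiferential} (an infimum over a ball, which is exactly what Theorem~\ref{theorem_main_fuzzy_sub} provides and why the paper's proof of Proposition~\ref{main_fuzzy_cod} needs no truncation at all), rather than a truncation performed outside the black box.
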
\vspace*{-0.13in} \begin{proposition}[\bf sequential coderivative Leibniz rule, II]\label{main_fuzzy_cod} Let
$\ox^\ast\in\Hat{D}^\ast\Intfset{\Phi}(\ox,\oy)(\oy^\ast)$ in the setting of Proposition~{\rm\ref{main_fuzzy_cod}}. Then there exist sequences $\{x_k\}\subset\X$,
$\{\x_k\}\subset\Leb^\infty({T},\X)$, $\{\x_k^\ast\}\subset{\Leb}^1({T},\X)$, $\{\y_k\}\subset\Leb^1(T,\Y)$, and $\{\y_k^\ast\}\subset\Leb^\infty(T,\Y)$ such that the following assertions hold:
\begin{enumerate}[label=\alph*),ref=\alph*)] \item[\bf(i)] $\x_k^*(t)\in\Hat{D}^\ast\Phi_t\big(\x_k(t),\y_k(t)\big)\big(\y_k^\ast(t)\big)$ for a.e.\ $t\in T$ and all $k\in\N$. \item[\bf(ii)]
$\|\ox-x_k\|\to 0$, $\|\ox-\x_k\|_\infty\to 0$, $\disp\int_T\|\ooy(t)-\y_k(t)\|d\mu\to 0$, and $\|\y_k^\ast-\oy^\ast\|_\infty\to 0$ as $k\to\infty$. \item[\bf(iii)]
$\disp\int_T\|\x_k^*(t)\|\cdot\|\x_k(t)-x_k\|d\mu\to 0$ and $\disp\int_T\x_k^*(t) d\mu\to\ox^\ast$. \end{enumerate} \end{proposition}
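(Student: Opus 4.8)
The plan is to reduce the claim to the subdifferential Leibniz rule of Theorem~\ref{theorem_main_fuzzy_sub}, applied to the extended-real-valued normal integrand obtained as the indicator of the graph of $\Phi_t$. First I would set $\varphi_t(x,y):=\delta_{\gph\Phi_t}(x,y)$, treating $x\in\X$ as the deterministic variable and $y\in\Y$ as the random one, so that $\Intf{\varphi}(x,\y)=\int_T\delta_{\gph\Phi_t}(x,\y(t))d\mu$ equals $0$ when $\y(t)\in\Phi_t(x)$ a.e.\ and $+\infty$ otherwise; equivalently $\Intf{\varphi}=\delta_{\mathcal C}$ with $\mathcal C:=\{(x,\y)\;|\;\y(t)\in\Phi_t(x)\ \text{a.e.}\}$. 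Since $\gph\Phi_t$ is closed and $\gph\Phi\in\mathcal A\otimes\mathcal B(\X\times\Y)$, the function $\varphi$ is a proper normal integrand, and the reference point $(\ox,\ooy)$ lies in $\mathcal C$ because $\ooy\in\mathcal S_\Phi(\ox,\oy)$.

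Next I would verify the hypotheses of Theorem~\ref{theorem_main_fuzzy_sub}. The lower boundedness \eqref{lower_bound_assump} holds trivially with $\nu\equiv0$ since $\varphi\ge0$, and the convexity requirement \eqref{convex_cond} for $\varphi_t(v,\cdot)$ on $T_{na}$ is exactly the convexity of $\Phi_t(v)$ guaranteed by the first line of \eqref{convex_cond_set}. For the integrability hypothesis I would take $\ooy^\ast$ to be the constant function $\ooy^\ast(t):=-\oy^\ast$ and compute
\begin{equation*}
\inf_{(v,w)\in\mathbb{B}_{\hat\rho}(\ox)\times\Y}\big\{\varphi_t(v,w)-\la\ooy^\ast(t),w\ra\big\}=\inf\big\{\la\oy^\ast,w\ra\;\big|\;v\in\mathbb{B}_{\hat\rho}(\ox),\,w\in\Phi_t(v)\big\},
\end{equation*}
which by the integrable boundedness $\Phi_t(v)\subset\kappa(t)\mathbb{B}$ in \eqref{convex_cond_set} is bounded in absolute value by $\|\oy^\ast\|\kappa(t)$; together with measurability of infimal functions of normal integrands this gives integrability on $T$.

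The crux of the argument, and the step I expect to be the main obstacle, is to show that $\ox^\ast\in\Hat D^\ast\Intfset{\Phi}(\ox,\oy)(\oy^\ast)$, i.e.\ $(\ox^\ast,-\oy^\ast)\in\Hat N((\ox,\oy);\gph\Intfset{\Phi})$, forces $(\ox^\ast,\ooy^\ast)\in\Hat\partial\Intf{\varphi}(\ox,\ooy)=\Hat N((\ox,\ooy);\mathcal C)$. The point is that the integration operator $(x,\y)\mapsto(x,\int_T\y\,d\mu)$ carries $\mathcal C$ into $\gph\Intfset{\Phi}$, and for $(x,\y)\in\mathcal C$ with $y:=\int_T\y\,d\mu$ one has, with $\ooy^\ast$ constant,
\begin{equation*}
\la\ox^\ast,x-\ox\ra+\la\ooy^\ast,\y-\ooy\ra=\la\ox^\ast,x-\ox\ra-\la\oy^\ast,y-\oy\ra,
\end{equation*}
while $\|y-\oy\|\le\|\y-\ooy\|_1\le\|(x-\ox,\y-\ooy)\|$. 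Thus each regular-normal inequality on $\gph\Intfset{\Phi}$ transfers, up to the fixed factor coming from this norm comparison, to the corresponding quotient on $\mathcal C$, and since $(x,y)\to(\ox,\oy)$ whenever $(x,\y)\to(\ox,\ooy)$ in $\mathcal C$, the $\limsup$ defining $\Hat N((\ox,\ooy);\mathcal C)$ is $\le 0$. The delicate part is keeping track of the two different norms in numerator and denominator and confirming that the little-o on the finite-dimensional side still vanishes after division by the $\Leb^1$-size of the perturbation.

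Finally, I would apply Theorem~\ref{theorem_main_fuzzy_sub} to $\varphi$ at $(\ox,\ooy)$ with the subgradient $(\ox^\ast,\ooy^\ast)$, obtaining sequences $\{x_k\},\{\x_k\},\{\x_k^\ast\},\{\y_k\},\{\y_k^\ast\}$ in the required spaces. Assertion~(i) of that theorem reads $(\x_k^\ast(t),\y_k^\ast(t))\in\Hat\partial\varphi_t(\x_k(t),\y_k(t))=\Hat N((\x_k(t),\y_k(t));\gph\Phi_t)$, which by definition \eqref{rcod} means $\x_k^\ast(t)\in\Hat D^\ast\Phi_t(\x_k(t),\y_k(t))(-\y_k^\ast(t))$. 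Replacing $\y_k^\ast$ by $-\y_k^\ast$ then yields assertion~(i) of the proposition together with $\|\y_k^\ast-\oy^\ast\|_\infty\to0$ coming from item~(v) of the theorem (recall $\ooy^\ast\equiv-\oy^\ast$), while the remaining convergences in (ii) and (iii) are read off directly from items~(ii), (iii), and (v) of Theorem~\ref{theorem_main_fuzzy_sub}; item~(iv) there is automatic since both indicator values vanish. This completes the proof.
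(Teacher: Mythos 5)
Your proof is correct, and its skeleton is exactly the paper's: define the indicator normal integrand $\varphi_t(v,w):=\delta_{\gph\Phi_t}(v,w)$, check that \eqref{convex_cond_set} gives \eqref{lower_bound_assump} (with $\nu\equiv 0$), \eqref{convex_cond}, and the integrability hypothesis, and then invoke Theorem~\ref{theorem_main_fuzzy_sub} at $(\ox,\ooy)$. The genuine divergence is in the crux step of passing from $\ox^\ast\in\Hat{D}^\ast\Intfset{\Phi}(\ox,\oy)(\oy^\ast)$ to $(\ox^\ast,\ooy^\ast)\in\Hat{\partial}\Intf{\varphi}(\ox,\ooy)$. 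The paper does this with machinery: it takes a Fr\'echet differentiable support function $\vt$ from the smooth variational description of regular subgradients \cite[Theorem~1.88(i)]{m06}, composes it with the integration operator to form $\psi(u,\w):=\vt\big(u,\int_T\w(t)d\mu\big)$, observes that $\Intf{\varphi}-\psi$ attains a local minimum at $(\ox,\ooy)$, and concludes via the Fermat and sum rules. You instead verify the defining $\limsup$ inequality of the regular normal cone to $\mathcal{C}:=\{(x,\y)\;|\;\y(t)\in\Phi_t(x)\text{ a.e.}\}$ directly: because $\ooy^\ast$ is the constant function, the numerator over $\mathcal{C}$ coincides with the numerator over $\gph\Intfset{\Phi}$ at the image point $y=\int_T\y(t)d\mu$, while $\big\|\int_T(\y(t)-\ooy(t))d\mu\big\|\le\|\y-\ooy\|_1$ makes the $\mathcal{C}$-denominator dominate the finite-dimensional one; hence the upper little-o (equivalently, nonpositive-$\limsup$) estimate transfers, since when the common numerator is positive, dividing by the larger denominator only decreases the quotient, and when it is nonpositive there is nothing to prove --- no ``fixed factor'' is actually needed. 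Both arguments are valid; yours is more elementary and self-contained (it uses only linearity and the norm-one bound of the integration operator), and it also keeps explicit track of the sign bookkeeping ($\ooy^\ast\equiv-\oy^\ast$ and the final replacement of $\y_k^\ast$ by $-\y_k^\ast$), which the paper's write-up glosses over when it identifies $\Hat{D}^\ast\Intfset{\Phi}(\ox,\oy)(\oy^\ast)$ with $\Hat{\partial}\delta_{\gph\Intfset{\Phi}}(\ox,\oy)$ without flipping the sign of $\oy^\ast$.
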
\vspace*{-0.05in} {\bf Proof}. Considering the
extended-real-valued function $\varphi(t,v,w):=\delta_{\gph\Phi_t}(u,w)$, observe that $\varphi$ is a normal integrand. Furthermore, it is easy to see that the imposed assumptions in
\eqref{convex_cond_set} yield the fulfillment of \eqref{convex_cond} for $\ph$. Picking any $\ox^\ast\in\Hat{D}^\ast\Intfset{\Phi}( \ox,\oy)(\oy^\ast)$ gives us by regular coderivative definition
\eqref{rcod} that $(\ox^\ast,\oy^\ast)\in\Hat{\partial}\delta_{\gph\Intfset{\Phi}}(\ox,\oy)$. Now we use the smooth variational description of regular subgradients taken from
\cite[Theorem~1.88(i)]{m06}, which tells us that there exist $\eta>0$ and a function $\vt\colon\B_\eta(\ox,\oy)\to\X\times\Y$ that is Fr\'echet differentiable at $(\ox,\oy)$ with
$\nabla\vt(\ox,\oy)=(\ox^\ast,\oy^\ast)$ and such that the difference $\delta_{\gph\Intfset{\Phi}}-\vt$ attains its minimum at $(\ox,\oy)$ on $\mathbb{B}_{\eta}(\ox,\oy)$. Observe that
\begin{align*} \delta_{\gph\Intfset{\Phi}}\bigg(u,\int_T\ooy(t)d\mu\bigg)&=\int_T \varphi_t\big(u,\ooy(t)\big)d\mu\;\mbox{ and}\\
\delta_{\gph\Intfset{\Phi}}\bigg(u,\int_T\w(t)d\mu\bigg)&\le\int_T\varphi_t\big(u,\w(t)\big)d\mu\;\text{ for all }\;(u,\w)\in\X\times\Leb^1(T,\Y). \end{align*} Consider further the
extended-real-valued function \begin{equation*} \X\times\Leb^1(T,\Y)\ni(u,\w)\to\Intf{\varphi}(u,\w)-\psi(u,\w)\;\mbox{ with }\;\psi(u,\w):=\vt\bigg(u,\int_T\w(t)d\mu\bigg), \end{equation*} which
attains a local minimum at $(\ox,\ooy)$. Hence we have by the elementary version of the subdifferential Fermat rule (see, e.g., \cite[Proposition~1.114]{m06}) that
$(0,0)\in\Hat\partial(\Intf{\varphi}-\psi)(\ox,\ooy)$. Observe that $\psi$ is clearly Fr\'echet differentiable at $(\ox,\ooy)$ with $\nabla\psi(\ox,\ooy)=(\ox^\ast,\ooy^\ast)$, where $\ooy^\ast$
is the constant function $\ooy^\ast(t):=\oy^\ast\1_T(t)$. Employing now the sum rule from \cite[Proposition~1.107]{m06}) yields $(\ox^\ast,\ooy^\ast)\in\Hat{\partial}\Intf{\varphi}(\ox,\ooy)$.
Furthermore, it follows from \eqref{convex_cond_set} that the function \begin{equation*} t\to\inf_{\mathbb{B}_{\rho}(\ox)\times\Y}\big\{\varphi_t(\cdot,\cdot)-\la\ooy^\ast(t),\cdot\ra\big\}
\end{equation*} is integrable on $T$. Finally, the application of Theorem~\ref{theorem_main_fuzzy_sub} completes the proof of the proposition. $\h$\vspace*{0.05in}

The subsequent results of this section establish {\em pointwise} versions of coderivative Leibniz rules under the {\em integrable quasi-Lipschitzian} property of set-valued integrands $\Phi_t$.
The first theorem gives us an upper estimate of the {\em regular coderivative} of $\Intfset{\Phi}$ via the integral of the limiting coderivative of $\Phi_t$.\vspace*{-0.05in} \begin{theorem}[\bf
pointwise estimate for regular coderivatives of expected-integral mappings]\label{Theo_lim_reg_Cod} Let $\Phi\colon T\times\X\tto\Y$ be a set-valued normal integrand on a complete finite measure
space $(T,\mathcal{A},\mu)$, let $\ox\in\dom\Intfset{\Phi}$ satisfy the conditions in \eqref{convex_cond_set}, and let $\ooy\in\mathcal{S}_{\Phi}(\ox,\oy)$ for some $\oy\in\Intfset{\Phi}(\ox)$.
Assume in addition that the integrable quasi-Lipschitzian property holds for $\Phi$ around $(\ox,\ooy)$. Then we have the inclusion \begin{equation}\label{reg-point}
\Hat{D}^\ast\Intfset{\Phi}(\ox,\oy)(y^\ast)\subset\int_T{D}^\ast\Phi_t\big(\ox,\ooy(t)\big)(y^\ast)d\mu\;\mbox{ for all }\;y^\ast\in\Y. \end{equation} \end{theorem}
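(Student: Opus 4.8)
The plan is to combine the sequential coderivative Leibniz rule from Proposition~\ref{main_fuzzy_cod} with a limiting procedure, where the integrable quasi-Lipschitzian property supplies the uniform integrability needed to pass to the limit under the integral sign. Fix $y^\ast\in\Y$ and pick any $\ox^\ast\in\Hat{D}^\ast\Intfset{\Phi}(\ox,\oy)(y^\ast)$. By Proposition~\ref{main_fuzzy_cod} we obtain sequences $\{x_k\}$, $\{\x_k\}$, $\{\x_k^\ast\}$, $\{\y_k\}$, and $\{\y_k^\ast\}$ with $\x_k^\ast(t)\in\Hat{D}^\ast\Phi_t(\x_k(t),\y_k(t))(\y_k^\ast(t))$ a.e., together with the convergences $\|\ox-x_k\|\to 0$, $\|\ox-\x_k\|_\infty\to 0$, $\|\ooy-\y_k\|_1\to 0$, $\|\y_k^\ast-\oy^\ast\|_\infty\to 0$, and $\int_T\x_k^\ast(t)d\mu\to\ox^\ast$. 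The target is to produce a measurable selection $\x^\ast(t)\in{D}^\ast\Phi_t(\ox,\ooy(t))(y^\ast)$ with $\ox^\ast=\int_T\x^\ast(t)d\mu$, which is exactly membership of $\ox^\ast$ in the right-hand Aumann integral of \eqref{reg-point}.

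\textbf{Obtaining a uniform bound and an integrable limit selection.}
First I would use the integrable quasi-Lipschitzian property. For $k$ large the triples $(\x_k,\y_k,\y_k^\ast)$ lie in the neighborhoods prescribed in Definition~\ref{int-lipl}(ii), so \eqref{Int_Lips_like_inq} (equivalently its regular version \eqref{Regular:Int_Lips_like_inq} via Proposition~\ref{eq:regular:basic}) gives the pointwise bound $\|\x_k^\ast(t)\|\le\ell(t)\|\y_k^\ast(t)\|$ a.e. Since $\|\y_k^\ast\|_\infty$ is bounded (as $\y_k^\ast\to\oy^\ast$ in $\Leb^\infty$), this yields a uniform domination $\|\x_k^\ast(t)\|\le C\ell(t)$ with $\ell\in\Leb^1(T,\R_+)$. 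This uniform integrability is the engine of the argument: it lets me invoke a weak-compactness result of Dunford--Pettis type to extract, along a subsequence, a weak limit $\x^\ast\in\Leb^1(T,\X)$ of $\{\x_k^\ast\}$. Weak convergence in $\Leb^1$ preserves integrals, so $\int_T\x^\ast(t)d\mu=\lim_k\int_T\x_k^\ast(t)d\mu=\ox^\ast$.

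\textbf{Identifying the limit in the limiting coderivative.}
The crux is to show $\x^\ast(t)\in{D}^\ast\Phi_t(\ox,\ooy(t))(y^\ast)$ for a.e.\ $t$. Here I would exploit the robustness/outer-limit representation \eqref{Lim_repr_cod} of the limiting coderivative. The regular coderivative memberships hold at the moving base points $(\x_k(t),\y_k(t))\to(\ox,\ooy(t))$ (the latter after passing to an a.e.-convergent subsequence of $\y_k\to\ooy$, justified by $\|\ooy-\y_k\|_1\to0$ and $\|\ox-\x_k\|_\infty\to0$) with multiplier $\y_k^\ast(t)\to y^\ast$. However, weak $\Leb^1$ convergence of $\x_k^\ast$ does not give a.e.\ convergence, so I cannot directly feed the pointwise limit into \eqref{Lim_repr_cod}. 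I would overcome this by a Mazur-type convexification: passing to convex combinations of the $\x_k^\ast$ that converge \emph{strongly} (hence a.e.\ along a further subsequence) to $\x^\ast$, while using the convexity afforded by \eqref{convex_cond_set} on the nonatomic part together with the separate handling of atoms. On each atom the measure-theoretic pointwise argument is direct; on the nonatomic part the convex-valuedness of $\gph\Phi_t$ makes the coderivative graph convex, so convex combinations of regular coderivative elements remain admissible, and the a.e.\ limit lands in ${D}^\ast\Phi_t$ via \eqref{Lim_repr_cod}. I expect this a.e.-identification step---reconciling weak $\Leb^1$ compactness with the pointwise graphical closure defining the limiting coderivative---to be the main obstacle, and the convexity hypothesis \eqref{convex_cond_set} together with the atomic/nonatomic decomposition to be precisely what resolves it. Measurability of the limiting selection follows from Proposition~\ref{lemma_measurability_reg_sub}(iv), completing the membership $\ox^\ast\in\int_T{D}^\ast\Phi_t(\ox,\ooy(t))(y^\ast)d\mu$.
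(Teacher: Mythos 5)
The first half of your argument coincides with the paper's own proof: apply Proposition~\ref{main_fuzzy_cod}, use the integrable quasi-Lipschitzian property (through its regular-coderivative form \eqref{Regular:Int_Lips_like_inq} of Proposition~\ref{eq:regular:basic}) to get the domination $\|\x_k^\ast(t)\|\le\ell(t)\|\y_k^\ast(t)\|\le M\ell(t)$ for large $k$, pass to a subsequence along which $(\x_k(t),\y_k(t),\y_k^\ast(t))\to(\ox,\ooy(t),y^\ast)$ a.e., and deduce from \eqref{Lim_repr_cod} that $\Limsup_{k\to\infty}\{\x_k^\ast(t)\}\subset D^\ast\Phi_t(\ox,\ooy(t))(y^\ast)$ for a.e.\ $t\in T$. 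At this point the paper finishes in one stroke by citing Fatou's lemma for multifunctions \cite[Corollary~4.1]{bs}, which under the above domination yields
\begin{equation*}
\ox^\ast\in\Limsup_{k\to\infty}\bigg\{\int_T\x_k^\ast(t)\,d\mu\bigg\}\subset\int_T\Limsup_{k\to\infty}\big\{\x_k^\ast(t)\big\}\,d\mu\subset\int_T D^\ast\Phi_t\big(\ox,\ooy(t)\big)(y^\ast)\,d\mu.
\end{equation*}
You instead try to reprove this limiting step by hand via Dunford--Pettis and Mazur, and that is where a genuine flaw enters.

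The flawed step is your justification on the nonatomic part: you assert that ``the convex-valuedness of $\gph\Phi_t$ makes the coderivative graph convex, so convex combinations of regular coderivative elements remain admissible.'' This is wrong on two counts. First, \eqref{convex_cond_set} gives convexity of the values $\Phi_t(x)$, not of $\gph\Phi_t$; these are different hypotheses. Second, and more fundamentally, no convexity of this kind is available: your Mazur combinations $\sum_k\lambda_k\x_k^\ast(t)$ mix regular normals taken at \emph{different} base points $(\x_k(t),\y_k(t))$, and convex combinations of normals at distinct points are not normals at any point; worse, the target set $D^\ast\Phi_t(\ox,\ooy(t))(y^\ast)$ is in general nonconvex (the paper's own example $\Phi(x)=|x|$ gives $D^\ast\Phi(0,0)(-1)=\{-1,1\}$), so the a.e.\ limit of your convex combinations can only be placed in $\co D^\ast\Phi_t(\ox,\ooy(t))(y^\ast)$, not in $D^\ast\Phi_t(\ox,\ooy(t))(y^\ast)$ itself. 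What actually rescues the argument on $T_{na}$ is not convexity of coderivatives but \emph{nonatomicity of the measure}: the sets $D^\ast\Phi_t(\ox,\ooy(t))(y^\ast)$ are closed, measurable in $t$ by Proposition~\ref{lemma_measurability_reg_sub}(iv), and integrably bounded by $\ell(t)\|y^\ast\|$, so the Lyapunov--Aumann convexification theorem gives $\int_{T_{na}}\co D^\ast\Phi_t\,d\mu=\int_{T_{na}}D^\ast\Phi_t\,d\mu$, and membership of $\int_{T_{na}}\x^\ast\,d\mu$ in the right-hand side follows even though $\x^\ast(t)$ need not lie in $D^\ast\Phi_t(\ox,\ooy(t))(y^\ast)$ pointwise. (Your treatment of atoms is fine: there, weak $\Leb^1$ convergence forces norm convergence of the a.e.-constant values, so the limit does land in the pointwise outer limit.) With this repair your route becomes a correct proof --- in effect a proof of the very multivalued Fatou lemma the paper cites --- but as written, the decisive step rests on a false convexity claim.
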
\vspace*{-0.05in} {\bf Proof}.
Pick any $\ox^\ast\in\Hat{D}^\ast\Intfset{\Phi}(\ox,\oy)(\oy^\ast)$. Proposition~\ref{main_fuzzy_cod} allows us to find sequences $\{x_k\}\subset\X$, $\{\x_k\}\subset\Leb^\infty({T},\X)$,
$\{\x_k^\ast\}\subset{\Leb}^1({T},\X)$, $\{\y_k\}\subset\Leb^1(T,\Y)$, and $\{\y_k^\ast\}\subset\Leb^\infty(T,\Y)$ satisfying the conditions
$\x_k^*(t)\in\Hat{D}^\ast\Phi_t(\x_k(t),\y_k(t))(\y_k^\ast(t))$ for a.e.\ $t\in T$, $\|\ox-\x_k\|_\infty\to 0$, $\|\y_k^\ast- \oy^\ast\|_\infty\to 0 $, as well as \begin{equation*}
\int_T\big\|\ooy(t)-\y_k(t)\big\|d\mu\to 0\;\mbox{ and }\;\bigg\|\int_T\x_k^*(t)d\mu-\ox^\ast\bigg\|\to 0\;\mbox{ when }\;k\to\infty. \end{equation*} Passing to a subsequence if necessary, we get
that that $(\x_k(t),\y_k(t),\y_k^\ast(t))\to(\ox,\ooy(t),\oy^\ast)$ as $k\to\infty$ for a.e.\ $t\in T$,  which implies by using the limiting coderivative representation \eqref{Lim_repr_cod} that
\begin{equation}\label{incl_01} \Limsup_{k\to\infty}\big\{\x_k^*(t)\big\}\subset D^\ast\Phi_t\big(\ox,\ooy(t)\big)(\oy^\ast)\;\mbox{ for a.e. }\;t\in T. \end{equation} Employing now the imposed
integrable quasi-Lipschitzian property of $\Phi$ around $(\ox,\ooy)$ yields \begin{equation*} \|\x_k^\ast(t)\|\le\ell(t)\|\y_k^\ast(t)\|\le\ell(t)M\;\text{ for a.e. }\;t\in T\;\mbox{ and large
}\;k\in\N, \end{equation*} where $M:=\sup\{\|\y_k^\ast\|_\infty\mbox{ over }k\in\N\}$. Then Fatou's lemma for multifunctions taken from \cite[Corollary~4.1]{bs} and being combined with
\eqref{incl_01} tells us that \begin{align*}
\ox^\ast\in\Limsup_{k\to\infty}\left\{\displaystyle\int_{{T}}\x_k^*(t)d\mu\right\}&\subset\displaystyle\int_{{T}}\Limsup_{k\to\infty}\big\{\x_k^*(t)\big\}d\mu\\
&\subset\displaystyle\int_{{T}}D^\ast\Phi_t\big(\ox,\ooy(t)\big)(y^\ast)d\mu \end{align*} for all $y^*\in\Y$. This shows that
$\ox^\ast\in\disp\int_{{T}}D^\ast\Phi_t\big(\ox,\ooy(t)\big)(y^\ast)d\mu$ and thus verifies \eqref{reg-point}. $\h$\vspace*{0.05in}

To proceed further with deriving an efficient pointwise upper estimate of the {\em limiting coderivative} of  $\Intfset{\Phi}$ via the limiting coderivative of the integrand $\Phi_t$, we need to
invoke an additional property of the set-valued mapping $\mathcal{S}_\Phi$ defined in \eqref{mapping:SPHI}. This property of multifunctions, known as {\em inner semicompactness}, is formulated
and discussed in \cite[Definition~1.63(ii)]{m06}. The reader can find in \cite[Definition~1.63(i)]{m06} a parallel {\em inner semicontinuity} property of multifunctions, which could also be used
to establish complemented coderivative Leibniz rules, while we are not going to pursue this aim in this paper.

Recall that $\mathcal{S}_\Phi$ is {\em inner semicompact} at $(\ox,\oy)$ if for every sequence $(x_k,y_k)\to(\ox,\oy)$ there exists a sequence $\y_k\in\mathcal{S}_\Phi(x_k,y_k)$ that contains an
$\Leb^1({T},\Y)$-norm convergent subsequence as $k\to\infty$.\vspace*{-0.05in}
\begin{theorem}[\bf coderivative Leibniz rule for integrably quasi-Lipschitzian
multifunctions]\label{Theo_Basic_coder} Let $\Phi\colon T\times\X\tto\Y$ be a set-valued normal integrand on a complete finite measure space $(T,\mathcal{A},\mu)$, and let
$\ox\in\dom\Intfset{\Phi}$ satisfy the conditions in \eqref{convex_cond_set}. Take $\oy\in\Intfset{\Phi}(\ox)$ such that the mapping $\mathcal{S}_\Phi$ is inner semicompact at $(\ox,\oy)$ and
assume in addition that $\Phi$ is integrably quasi-Lipschitzian around $(\ox,\ooy)$ for all $\ooy\in\mathcal{S}_\Phi(\ox,\oy)$. Then we have the limiting coderivative Leibniz rule
\begin{equation}\label{lim-point} {D}^\ast\Intfset{\Phi}(\ox,\oy)(y^\ast)\subset\bigcup\limits_{\ooy\in\mathcal{S}_{\Phi}(\ox,\oy)}\int_T{D}^\ast\Phi_t\big(\ox,\ooy(t)\big)(y^\ast)d\mu\;\mbox{
whenever }\;y^\ast\in\Y. \end{equation} \end{theorem}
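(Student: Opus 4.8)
The plan is to combine the outer-limit representation \eqref{Lim_repr_cod} of the limiting coderivative of $\Intfset{\Phi}$ with the pointwise regular-coderivative estimate of Theorem~\ref{Theo_lim_reg_Cod}, and then to pass to the limit through a Fatou-type argument for multifunctions. Fix $y^\ast\in\Y$ and pick any $x^\ast\in D^\ast\Intfset{\Phi}(\ox,\oy)(y^\ast)$. Applying \eqref{Lim_repr_cod} to $\Intfset{\Phi}$ at $(\ox,\oy)$ gives sequences $(x_k,y_k)\st{\gph\Intfset{\Phi}}{\to}(\ox,\oy)$, $y_k^\ast\to y^\ast$, and $x_k^\ast\to x^\ast$ with $x_k^\ast\in\Hat D^\ast\Intfset{\Phi}(x_k,y_k)(y_k^\ast)$. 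Since $\mathcal{S}_\Phi$ is inner semicompact at $(\ox,\oy)$, there is a sequence $\ooy_k\in\mathcal{S}_\Phi(x_k,y_k)$ admitting an $\Leb^1$-norm convergent subsequence; relabeling, I would assume $\ooy_k\to\ooy$ in $\Leb^1(T,\Y)$ and, along a further subsequence, $\ooy_k(t)\to\ooy(t)$ for a.e.\ $t\in T$. The closedness of $\gph\Phi_t$ together with $x_k\to\ox$ yields $\ooy(t)\in\Phi_t(\ox)$ a.e., while $\int_T\ooy_k\,d\mu=y_k\to\oy$ gives $\int_T\ooy\,d\mu=\oy$; hence $\ooy\in\mathcal{S}_\Phi(\ox,\oy)$, and by hypothesis $\Phi$ is integrably quasi-Lipschitzian around $(\ox,\ooy)$.

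Next I would apply Theorem~\ref{Theo_lim_reg_Cod} at the moving reference data $(x_k,\ooy_k)$ rather than at $(\ox,\ooy)$. This is legitimate for large $k$: the conditions \eqref{convex_cond_set} persist on the smaller balls $\mathbb{B}_{\rho-\|x_k-\ox\|}(x_k)\subset\mathbb{B}_\rho(\ox)$, and the integrable quasi-Lipschitzian estimate around $(\ox,\ooy)$ restricts to a neighborhood of $(x_k,\ooy_k)$ as soon as $(x_k,\ooy_k)$ enters that neighborhood. The theorem then produces $x_k^\ast\in\int_T D^\ast\Phi_t\big(x_k,\ooy_k(t)\big)(y_k^\ast)\,d\mu$, i.e., there exist integrable selections $\x_k^\ast(t)\in D^\ast\Phi_t(x_k,\ooy_k(t))(y_k^\ast)$ with $x_k^\ast=\int_T\x_k^\ast(t)\,d\mu$. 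The quasi-Lipschitzian property supplies the uniform bound $\|\x_k^\ast(t)\|\le\ell(t)\|y_k^\ast\|\le\ell(t)M$ for a.e.\ $t$ and large $k$, with $M:=\sup_k\|y_k^\ast\|<\infty$, which furnishes the integrable domination needed in the final step.

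Finally I would pass to the limit. Since $x_k^\ast=\int_T\x_k^\ast\,d\mu\to x^\ast$ and the selections $\x_k^\ast$ are dominated by $\ell(\cdot)M\in\Leb^1(T,\R_+)$, Fatou's lemma for multifunctions \cite[Corollary~4.1]{bs} gives $x^\ast\in\Limsup_k\int_T\x_k^\ast\,d\mu\subset\int_T\Limsup_k\{\x_k^\ast(t)\}\,d\mu$. For a.e.\ $t$ one has $(x_k,\ooy_k(t))\to(\ox,\ooy(t))$, $y_k^\ast\to y^\ast$, and $\x_k^\ast(t)\in D^\ast\Phi_t(x_k,\ooy_k(t))(y_k^\ast)$, so the robustness (closed-graph property) of the limiting coderivative yields $\Limsup_k\{\x_k^\ast(t)\}\subset D^\ast\Phi_t(\ox,\ooy(t))(y^\ast)$ a.e. Combining the two inclusions gives $x^\ast\in\int_T D^\ast\Phi_t(\ox,\ooy(t))(y^\ast)\,d\mu$ with $\ooy\in\mathcal{S}_\Phi(\ox,\oy)$, which is precisely the asserted Leibniz rule \eqref{lim-point}.

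The main obstacle I expect is the interchange of the two limiting operations: the outer Painlev\'e--Kuratowski limit coming from the limiting coderivative of $\Intfset{\Phi}$ and the inner integral/selection structure. This is exactly what forces the twin uses of inner semicompactness (to extract the convergent selections $\ooy_k$ and to identify the limit $\ooy\in\mathcal{S}_\Phi(\ox,\oy)$) and of the quasi-Lipschitzian bound (to dominate the coderivative selections $\x_k^\ast$ and to legitimize Fatou's lemma for multifunctions). A secondary delicate point is justifying that Theorem~\ref{Theo_lim_reg_Cod} may be re-invoked at the moving points $(x_k,\ooy_k)$, rather than only at the fixed base point $(\ox,\ooy)$, which relies on the neighborhood nature of both \eqref{convex_cond_set} and the quasi-Lipschitzian estimate.
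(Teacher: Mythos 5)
Your proof is correct, and its outer skeleton coincides with the paper's own: both start from the representation \eqref{Lim_repr_cod} of $D^\ast\Intfset{\Phi}(\ox,\oy)$, use the inner semicompactness of $\mathcal{S}_\Phi$ to extract $\Leb^1$-convergent selections $\ooy_k\in\mathcal{S}_\Phi(x_k,y_k)$ whose limit $\ooy$ lies in $\mathcal{S}_\Phi(\ox,\oy)$ by closedness of $\gph\Phi_t$, invoke the quasi-Lipschitzian bound $\|\x_k^\ast(t)\|\le\ell(t)M$ for integrable domination, and conclude via Fatou's lemma for multifunctions from \cite[Corollary~4.1]{bs}. The genuine difference is the middle step. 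The paper applies the \emph{sequential} Leibniz rule of Proposition~\ref{main_fuzzy_cod} to each quadruple $(x_k,y_k,x_k^\ast,y_k^\ast)$ and then runs a diagonal process, so the selections it manipulates satisfy \emph{regular} coderivative inclusions $\x_k^\ast(t)\in\Hat D^\ast\Phi_t(\x_k(t),\y_k(t))(\y_k^\ast(t))$ at approximating measurable points, and the pointwise limit inclusion \eqref{incl_01} then follows directly from the representation \eqref{Lim_repr_cod}. You instead re-invoke the already established \emph{pointwise} Theorem~\ref{Theo_lim_reg_Cod} at the moving data $(x_k,y_k,\ooy_k)$, obtaining selections with \emph{limiting} coderivative inclusions $\x_k^\ast(t)\in D^\ast\Phi_t\big(x_k,\ooy_k(t)\big)(y_k^\ast)$; the limit passage therefore requires the robustness (closed-graph property) of $D^\ast\Phi_t$ rather than just \eqref{Lim_repr_cod} --- a standard fact in finite dimensions, recorded in Section~\ref{sec2} of the paper. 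Your route buys modularity: Theorem~\ref{Theo_lim_reg_Cod} is used as a black box, its internal Fatou argument replacing the paper's diagonalization. The price, which you correctly identify and settle, is verifying that the hypotheses of that theorem --- the conditions \eqref{convex_cond_set} and the integrable quasi-Lipschitzian property --- are stable under small perturbations of the reference point, so the theorem may legitimately be applied at $(x_k,\ooy_k)$ for all large $k$; note that the paper's own application of Proposition~\ref{main_fuzzy_cod} at the quadruples requires an entirely analogous (if less explicit) stability check.
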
\vspace*{-0.05in} {\bf Proof}. Pick any $\ox^\ast\in{D}^\ast\Intfset{\Phi}(\ox,\oy)(\oy^\ast)$. It follows from representation
\eqref{Lim_repr_cod} of the limiting coderivative that there exist the convergent quadruples $(x_k,y_k,x_k^\ast,y_k^\ast)\to(\ox,\oy,\ox^\ast,\oy^\ast)$ with
$x_k^\ast\in\Hat{D}^\ast\Intfset{\Phi}(x_k,y_k) (y_k^\ast)$ for all $k\in \N$. Employing the imposed inner semicompactness of $\mathcal{S}_\Phi$ and passing to a subsequence if necessary give us
$\y_k\in\mathcal{S}_\Phi(x_k,y_k)$ such that $\y_k \to \ooy$ in the norm topology of $\Leb^1({T},\Y)$. Remembering that the graph of $\Phi_t$ is closed yields $\ooy\in\mathcal{S}_\Phi(\ox,\oy)$.
Applying further Proposition~\ref{main_fuzzy_cod} to each quadruple $(x_k,y_k,x_k^\ast,y_k^\ast)$ and employing the diagonal process ensure the existence of sequences
$\{\x_k\}\subset\Leb^\infty({T},\X)$, $\{\x_k^\ast\}\subset{\Leb}^1({T},\X)$, $\{\y_k\}\in\Leb^1(T,\Y)$, and $\{\y_k^\ast\}\in\Leb^\infty(T,\Y)$ satisfying the following conditions:
$\x_k^*(t)\in\Hat{D}^\ast\Phi_t(\x_k(t),\y_k(t))(\y_k^\ast(t))$ for a.e.\ $t\in T$ and all $k\in\N$ together with the norm convergence $\|\ox-\x_k\|_\infty\to 0$, $\|\ooy-\y_k\|_1\to 0$, and
$\|\y_k^\ast-\oy^\ast\|_\infty\to 0$ as well as \begin{equation*} \bigg\|\int_T\x_k^*(t)d\mu-\ox^\ast\bigg\|\to 0\;\mbox{ and
}\;\big(\x_k(t),\y_k(t),\y_k^\ast(t)\big)\to\big(\ox,\ooy(t),\oy^\ast\big)\;\mbox{ for a.e. }\;t\in T\;\mbox{ as }\;k\to\infty. \end{equation*} Furthermore, we have the limiting inclusion
\eqref{incl_01} as obtained in the proof of Theorem~\ref{Theo_lim_reg_Cod}. Using now the assumed integrable quasi-Lipschitzian property of $\Phi$ around $(\ox,\ooy)$ and remembering the
${\Leb}^\infty$-norm convergence $\y_k\to\ooy$ as $k\to\infty$ allow us to find $\ell\in\Leb^1(T,\R_+)$ and $k_0\in\N$ such that \begin{equation*}
\|\x_k^\ast(t)\|\le\ell(t)\|\y_k^\ast(t)\|\le\ell(t)M\;\mbox{ for almost all }\;t\in T\;\mbox{ and all }\;k\ge k_0, \end{equation*} where $M=\sup\{\|\y_k\|_\infty\;\mbox{over}\;k\in\N\}$. Hence
the functions $\x_k$ are uniformly integrable on $T$. It follows from \eqref{incl_01} and the aforementioned Fatou's lemma for multifunctions that for all $y^*\in\Y$ we get \begin{align*}
\ox^\ast\in\displaystyle\int_{{T}}D^\ast\Phi_t\big(\ox,\ooy(t)\big)(y^\ast)d\mu\subset\bigcup\limits_{\ooy\in\mathcal{S}_\Phi(\ox,\oy)}\int_T{D }^\ast\Phi_t\big(\ox,\ooy(t)\big)(y^\ast)d\mu,
\end{align*} which verifies \eqref{lim-point} and completes the proof of the theorem. $\h$\vspace*{0.05in}

The following result corresponds to an alternative Leibniz rule for coderivative of expected multifunctions when the mapping $\mathcal{S}$ is not inner semicompact at the reference point, but the
mapping $\Phi$ is integrably locally Lipschitzian.\vspace*{-0.05in}

\begin{theorem}[\bf coderivative Leibniz rule for integrably Lipschitzian multifunctions]\label{Theo_Basic_coder2} Let $\Phi\colon T\times\X\tto\Y$ be a set-valued normal integrand on a complete
finite measure space $(T,\mathcal{A},\mu)$, and let $\ox\in\dom\Intfset{\Phi}$. Suppose that  $\Phi$ is integrably locally Lipschitzian around $\ox$. Then for every $\oy  \in \Intfset{\Phi}(\ox)$
we have the limiting coderivative Leibniz rule \begin{equation}\label{lim-point2} {D}^\ast\Intfset{\Phi}(\ox,\oy)(y^\ast)\subset\int_T{D}^\ast\Phi_t\big(\ox,\Phi_t(\ox)\big)(y^\ast)d\mu\;\mbox{
whenever }\;y^\ast\in\Y, \end{equation} where ${D}^\ast\Phi_t\big(\ox,\Phi_t(\ox)\big)(y^\ast):=\bigcup_{ y\in\Phi_t(\ox)}{D}^\ast\Phi_t\big(\ox,y)(y^\ast)$. \end{theorem}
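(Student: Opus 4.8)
The plan is to follow the scheme of the proof of Theorem~\ref{Theo_Basic_coder}, replacing its inner semicompactness hypothesis (which is not available here) by the stronger integrable local Lipschitzian property together with the integrable boundedness in \eqref{convex_cond_set}. Fix $y^\ast\in\Y$ and pick any $\ox^\ast\in D^\ast\Intfset{\Phi}(\ox,\oy)(y^\ast)$. First I would invoke the limiting coderivative representation \eqref{Lim_repr_cod} to produce convergent quadruples $(x_k,y_k,x_k^\ast,y_k^\ast)\to(\ox,\oy,\ox^\ast,y^\ast)$ with $x_k^\ast\in\Hat D^\ast\Intfset{\Phi}(x_k,y_k)(y_k^\ast)$ for all $k\in\N$. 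Since $(x_k,y_k)\in\gph\Intfset{\Phi}$, the definition \eqref{aum} of the Aumann integral supplies a measurable selection $\ooy_k\in\mathcal{S}_\Phi(x_k,y_k)$ with $\int_T\ooy_k\,d\mu=y_k$.

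Next I would apply the sequential coderivative Leibniz rule of Proposition~\ref{main_fuzzy_cod} at each reference point $(x_k,y_k)$, which is legitimate since for large $k$ the point $x_k$ lies in $\mathbb{B}_\rho(\ox)$ and thus inherits the convexity and integrable boundedness in \eqref{convex_cond_set} on a smaller ball around $x_k$. This furnishes, for each $k$, inner sequences of regular coderivative selections whose integrals approximate $x_k^\ast$. A diagonal process then yields sequences $\{\x_k\}\subset\Leb^\infty(T,\X)$, $\{\x_k^\ast\}\subset\Leb^1(T,\X)$, $\{\y_k\}\subset\Leb^1(T,\Y)$, and $\{\y_k^\ast\}\subset\Leb^\infty(T,\Y)$ with $\x_k^\ast(t)\in\Hat D^\ast\Phi_t(\x_k(t),\y_k(t))(\y_k^\ast(t))$ for a.e.\ $t$, together with $\|\ox-\x_k\|_\infty\to 0$, $\|y^\ast-\y_k^\ast\|_\infty\to 0$, and $\int_T\x_k^\ast(t)\,d\mu\to\ox^\ast$. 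The crucial departure from Theorem~\ref{Theo_Basic_coder} is that here the selections $\y_k$ need not converge, as no inner semicompactness is imposed.

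The main work lies in controlling $\{\x_k^\ast(t)\}$ pointwise. Because $\Phi$ is integrably locally Lipschitzian around $\ox$, the characterization \eqref{Int_Lips_like_inq_nonatomic} of Theorem~\ref{Prop47} gives $\ell\in\Leb^1(T,\R_+)$ with $\|\x_k^\ast(t)\|\le\ell(t)\|\y_k^\ast(t)\|\le\ell(t)M$ for a.e.\ $t$ and large $k$, where $M:=\sup_k\|\y_k^\ast\|_\infty<\infty$; this supplies the integrable domination needed below. Now fix $t$ outside a null set and let $x^\ast(t)\in\Limsup_k\{\x_k^\ast(t)\}$, so that $\x_{k_j}^\ast(t)\to x^\ast(t)$ along a subsequence. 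Since \eqref{convex_cond_set} forces $\y_{k_j}(t)\in\Phi_t(\x_{k_j}(t))\subset\kappa(t)\mathbb{B}$, I can extract a further subsequence along which $\y_{k_j}(t)\to y(t)$; closedness of $\gph\Phi_t$ and $\x_{k_j}(t)\to\ox$ place $y(t)\in\Phi_t(\ox)$. Passing to this sub-subsequence and using $\y_{k_j}^\ast(t)\to y^\ast$ in the representation \eqref{Lim_repr_cod} yield $x^\ast(t)\in D^\ast\Phi_t(\ox,y(t))(y^\ast)\subset D^\ast\Phi_t(\ox,\Phi_t(\ox))(y^\ast)$, whence $\Limsup_k\{\x_k^\ast(t)\}\subset D^\ast\Phi_t(\ox,\Phi_t(\ox))(y^\ast)$ for a.e.\ $t$.

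Finally I would invoke Fatou's lemma for multifunctions from \cite{bs}, which together with the integrable domination and $\int_T\x_k^\ast\,d\mu\to\ox^\ast$ gives
\begin{equation*}
\ox^\ast\in\Limsup_{k\to\infty}\left\{\int_T\x_k^\ast(t)\,d\mu\right\}\subset\int_T\Limsup_{k\to\infty}\{\x_k^\ast(t)\}\,d\mu\subset\int_T D^\ast\Phi_t\big(\ox,\Phi_t(\ox)\big)(y^\ast)\,d\mu,
\end{equation*}
which is precisely \eqref{lim-point2}; graph measurability of the integrand is guaranteed by Proposition~\ref{lemma_measurability_reg_sub}(iv) combined with measurability of $t\mapsto\Phi_t(\ox)$. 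I expect the main obstacle to be the third step: without inner semicompactness one cannot produce a single measurable limit selection $\ooy$, so the pointwise subsequential-limit argument---extracting $y(t)\in\Phi_t(\ox)$ from the integrably bounded $\y_k(t)$ and matching it with the outer-limit elements of $\x_k^\ast(t)$---must be carried out entirely inside the integral. This is exactly why the conclusion \eqref{lim-point2} carries the pointwise union $\bigcup_{y\in\Phi_t(\ox)}$ rather than the union over selections appearing in \eqref{lim-point}.
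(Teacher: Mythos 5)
Your proposal is correct and follows essentially the same route as the paper's own proof: both pass from the representation \eqref{Lim_repr_cod} through the sequential rule of Proposition~\ref{main_fuzzy_cod} with a diagonal process, use the integrable local Lipschitzian property to get the domination $\|\x_k^\ast(t)\|\le\ell(t)M$, and then combine Fatou's lemma for multifunctions with a pointwise subsequential extraction of $\y_{k_j}(t)\in\kappa(t)\mathbb{B}$ and the closedness of $\gph\Phi_t$ to land in $D^\ast\Phi_t\big(\ox,\Phi_t(\ox)\big)(y^\ast)$. Your explicit observation that the selections $\y_k$ need not converge, so that the limit point $y(t)\in\Phi_t(\ox)$ must be produced pointwise inside the integral, is precisely the feature that distinguishes this result from Theorem~\ref{Theo_Basic_coder} and matches the paper's argument.
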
 \begin{proof} Following
the arguments of Theorem \ref{Theo_Basic_coder}, we have the existence of  sequences $\{\x_k\}\subset\Leb^\infty({T},\X)$, $\{\x_k^\ast\}\subset{\Leb}^1({T},\X)$, $\{\y_k\}\in\Leb^1(T,\Y)$, and
$\{\y_k^\ast\}\in\Leb^\infty(T,\Y)$ satisfying the following conditions: $\x_k^*(t)\in\Hat{D}^\ast\Phi_t(\x_k(t),\y_k(t))(\y_k^\ast(t))$ for a.e.\ $t\in T$ and all $k\in\N$ together with the norm
convergence $\|\ox-\x_k\|_\infty\to 0$, and $\|\y_k^\ast-\oy^\ast\|_\infty\to 0$ as well as $\|\int_T\x_k^*(t)d\mu-\ox^\ast\|\to 0$ and $\|\x_k^\ast(t)\|\le \ell(t)M$ for a.e. $t\in T$. Now, by
Fatou's lemma for multifunctions we get that

\begin{align*} x^\ast\in\Limsup_{k\to\infty}\left\{\displaystyle\int_{{T}}\x_k^*(t)d\mu\right\}&\subset\displaystyle\int_{{T}}\Limsup_{k\to\infty}\big\{\x_k^*(t)\big\}d\mu. \end{align*} To
conclude the proof, let us show the following inclusion $\Limsup_{k\to\infty}\{\x_k^*(t)\} \subset{D}^\ast\Phi_t\big(\ox,\Phi_t(\ox)\big)(y^\ast)$ for almost all $t\in T$. Indeed, consider a set
of full measure $\hat{T}$, where $\x_k(t) \to \ox$ and $\y_k(t) \to \oy$. Fix $t\in \hat{T}$ and $u^\ast \in  \Limsup_{k\to\infty}\{\x_k^*(t)\} $. Hence, by definition there exists a subsequence
$x^\ast_{k_j}(t) \to u^\ast$. Since $\|\y_{k_j}(t)\| \leq \kappa(t)$ (recall \eqref{convex_cond_set}), we can take assume that  $\y_{k_j}  \to y$ for some $y\in \Phi_t(\ox)$ (recall that the
graph of $\Phi_t$ is closed). Therefore, by the limiting coderivative representation \eqref{Lim_repr_cod}, we get that $x^\ast \in \limsup_{j\to \infty}
\Hat{D}^\ast\Phi_t(\x_{k_j}(t),\y_{k_j}(t))(\y_{k_j}^\ast(t))\subset{D}^\ast\Phi_t\big(\ox,y)\big)(y^\ast)$, and that concludes the proof. \end{proof}\vspace*{-0.05in}

Next we present a simple consequence of Theorem~\ref{Theo_Basic_coder} that is used in what follows.\vspace*{-0.05in}

\begin{corollary}[\bf coderivative Leibniz rule for single-valued expected-integral mappings]\label{cor_Basic_coder} Under the general assumptions of Theorem~{\rm\ref{Theo_Basic_coder}}, suppose
in addition that $\Intfset{\Phi}(\ox)$ is a singleton, and that $\Phi$ is integrably locally Lipschitzian around $\ox$. Then the mapping $\mathcal{S}_\Phi$ is inner semicompact at
$(\ox,\Intfset{\Phi}(\ox))$ and we have the coderivative Leibniz rule
 \begin{equation}\label{cor_Basic_coder_eq_01}
{D}^\ast\Intfset{\Phi}(\ox)(y^\ast)\subset\int_T{D}^\ast\Phi_t\big(\ox\big)(y^\ast)d\mu\;\mbox{ for all }\;y^*\in\Y. \end{equation} \end{corollary}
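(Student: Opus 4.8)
The plan is to deduce Corollary~\ref{cor_Basic_coder} from Theorem~\ref{Theo_Basic_coder} by checking that its hypotheses hold at $(\ox,\oy)$ with $\{\oy\}=\Intfset{\Phi}(\ox)$, and then collapsing the resulting union of integrals into the single integral in \eqref{cor_Basic_coder_eq_01}. The integrable quasi-Lipschitzian property around $(\ox,\ooy)$ for every $\ooy\in\mathcal{S}_\Phi(\ox,\oy)$ comes for free: since $\Phi$ is integrably locally Lipschitzian, Theorem~\ref{Prop47} provides the regular-coderivative bound \eqref{Int_Lips_like_inq_nonatomic}, which in particular yields the weaker localized estimate \eqref{Regular:Int_Lips_like_inq}, hence the quasi-Lipschitzian property by Proposition~\ref{eq:regular:basic}. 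Thus the only substantial points are the inner semicompactness of $\mathcal{S}_\Phi$ at $(\ox,\oy)$ and the reduction of the right-hand side, both of which I would derive from a single structural observation: the singleton assumption forces $\Phi_t(\ox)$ to be a.e.\ single-valued.

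To establish that observation I would split $T=T_{na}\cup T_{pa}$ and use $\Intfset{\Phi}(\ox)=\int_{T_{na}}\Phi_t(\ox)d\mu+\int_{T_{pa}}\Phi_t(\ox)d\mu$. A Minkowski sum is a singleton only if each summand is, so both integrals are singletons. On $T_{pa}$ each multifunction is a.e.\ constant over an atom, and the Aumann integral over an atom $T_k$ equals $\mu(T_k)\Phi_t(\ox)$; being a singleton forces $\Phi_t(\ox)$ to be a singleton on each atom. On $T_{na}$ the integrable boundedness in \eqref{convex_cond_set} lets me invoke the Lyapunov convexity theorem, so $\int_{T_{na}}\Phi_t(\ox)d\mu$ is convex and compact; a support-function argument (or a direct construction of two selections with distinct integrals whenever $\Phi_t(\ox)$ has positive diameter on a set of positive measure) then shows that this integral is a singleton only if $\Phi_t(\ox)$ is a singleton a.e.\ on $T_{na}$. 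Consequently $\Phi_t(\ox)=\{\ooy(t)\}$ for a.e.\ $t$, with $\ooy$ the unique element of $\mathcal{S}_\Phi(\ox,\oy)$.

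With single-valuedness in hand, inner semicompactness becomes immediate and in fact upgrades to norm convergence of the whole selection sequence. Given $(x_k,y_k)\to(\ox,\oy)$ in $\gph\Intfset{\Phi}$, pick any $\y_k\in\mathcal{S}_\Phi(x_k,y_k)$. The integrable local Lipschitz inclusion \eqref{eq_definition_Int_loc} with $x=x_k$, $x'=\ox$ gives $\Phi_t(x_k)\subset\{\ooy(t)\}+\ell(t)\|x_k-\ox\|\B$ for a.e.\ $t$, so that $\|\y_k(t)-\ooy(t)\|\le\ell(t)\|x_k-\ox\|$ a.e.; integrating yields $\|\y_k-\ooy\|_1\le\|\ell\|_1\,\|x_k-\ox\|\to 0$. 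Hence $\y_k\to\ooy$ in $\Leb^1(T,\Y)$, which proves inner semicompactness of $\mathcal{S}_\Phi$ at $(\ox,\oy)$.

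Finally I would feed these verifications into Theorem~\ref{Theo_Basic_coder} to obtain $D^\ast\Intfset{\Phi}(\ox,\oy)(y^\ast)\subset\bigcup_{\ooy\in\mathcal{S}_\Phi(\ox,\oy)}\int_T D^\ast\Phi_t(\ox,\ooy(t))(y^\ast)d\mu$, and then collapse the right-hand side: the union is over the single element $\ooy$, and since $\Phi_t(\ox)=\{\ooy(t)\}$ is a singleton, $D^\ast\Phi_t(\ox,\ooy(t))(y^\ast)=D^\ast\Phi_t(\ox)(y^\ast)$ in the paper's convention of dropping the second base point; dropping $\oy$ in $\Intfset{\Phi}$ as well gives exactly \eqref{cor_Basic_coder_eq_01}. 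I expect the \emph{main obstacle} to be the nonatomic part of the single-valuedness claim, namely rigorously justifying that a singleton Aumann integral over a nonatomic space forces the integrand to be a.e.\ single-valued, since this is where the Lyapunov theorem and a careful selection/support-function argument are needed; the remaining steps are routine once this is secured.
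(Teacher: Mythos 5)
Your proposal is correct, and its core follows the same route as the paper's proof: inner semicompactness of $\mathcal{S}_\Phi$ is extracted from the integrable local Lipschitz inclusion \eqref{eq_definition_Int_loc}, giving the pointwise bound $\|\y_k(t)-\ooy(t)\|\le\ell(t)\|x_k-\ox\|$ and hence $\Leb^1$-convergence $\y_k\to\ooy$ (the paper invokes dominated convergence where you integrate directly), after which Theorem~\ref{Theo_Basic_coder} is applied and the union over $\mathcal{S}_\Phi(\ox,\oy)$ collapses. The genuine difference is your preliminary step showing that $\Phi_t(\ox)$ is a.e.\ a singleton. The paper's proof skips this: it fixes a measurable selection $\ooy(t)\in\Phi_t(\ox)$ and immediately writes the displayed distance estimate, which is valid only when $\Phi_t(\ox)=\{\ooy(t)\}$ a.e., since the Lipschitz inclusion by itself bounds only $\dist(\y_k(t);\Phi_t(\ox))$ and not the distance to a prescribed selection; single-valuedness is also what legitimizes the notation $D^\ast\Phi_t(\ox)(y^\ast)$ in \eqref{cor_Basic_coder_eq_01} and the uniqueness of $\ooy$ in $\mathcal{S}_\Phi(\ox,\oy)$. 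So your extra step is not superfluous; it supplies a justification the paper leaves implicit. Two further remarks: the passage from integrable local Lipschitzness to the quasi-Lipschitz property via Theorem~\ref{Prop47} and Proposition~\ref{eq:regular:basic} is exactly the implication the paper itself records at the start of the proof of Theorem~\ref{nonatomic:charact}, so that part is safe; and the obstacle you flag as ``main'' is avoidable, since no Lyapunov theorem or atomic/nonatomic splitting is needed --- for a measurable, closed-valued, integrably bounded $F(t):=\Phi_t(\ox)$, either a Castaing-representation argument (two selections differing on a set of positive measure can be restricted to a subset on which one coordinate of their difference is bounded away from zero, producing selections with distinct integrals) or the support-function identity $\sigma_{\int_T F(t)d\mu}(p)=\int_T\sigma_{F(t)}(p)\,d\mu$ applied over a countable dense set of directions $p$ yields a.e.\ single-valuedness on an arbitrary finite measure space.
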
\vspace*{-0.05in} {\bf Proof}. Denote
$\oy:=\Intf{\Phi}(\ox)$ and take a measurable selection $\ooy(t)\in\Phi_t(\ox)$ for a.e.\ $t\in T$. To verify that $\mathcal{S}_\Phi$ is inner semicompact at $(\ox,\oy)$, let $(x_k,y_k)\to
(\ox,\oy)$ as $k\to\infty$, and let $\y_k\in\mathcal{S}_\Phi(x_k,y_k)$ for all $k\in\N$. Employing the integrable local Lipschitzian property \eqref{eq_definition_Int_loc} of $\Phi$ around $\ox$
tells us that \begin{align*} \|\y_k(t)-\ooy(t)\|\le\ell(t)\|x_k-\ox\|\;\text{ for a.e. }\;t\in T\;\mbox{ and all large }\;k\in\N. \end{align*} Then it follows from Lebesgue's dominated
convergence theorem that $\y_k\to\ooy$ in $\Leb^1(T,\Y)$ as $k\to\infty$, which verifies the inner semicompactness property of $\mathcal{S}_\Phi$ at $(\ox,\oy)$. Applying finally
Theorem~\ref{Theo_Basic_coder}, we arrive at \eqref{cor_Basic_coder_eq_01} and thus complete the proof of the corollary. $\h$\vspace*{0.05in}

As we see, the previous versions of the coderivative Leibniz rule provided just {\em upper estimates} of the regular and limiting coderivatives of the expected-integral mappings. Although
calculus rules as inclusions of this type suffice for many applications, it is important to establish efficient conditions ensuring the fulfillment of such results as {\em equalities}. The
following theorem does the job. We say that a set-valued mapping $F$ is {\em coderivative regular} at $(\ox,\oy)\in\gph F$ for $y^\ast\in \Y$ if $D^\ast F(\ox,\oy)(y^\ast)=\Hat{D}^\ast
F(\ox,\oy)(y^\ast)$.

\begin{theorem}[\bf coderivative Leibniz rule as equality]\label{codL-eq} In the setting of Corollary~{\rm\ref{cor_Basic_coder}}, {\em suppose that for a.e.\ $t\in T$ we have that
$\Phi_t(\ox)=\{\ooy(t)\}$ and that $\Phi_t$ is coderivative regular at $(\ox,\ooy(t))$ for $y^\ast\in \Y$. Then} \begin{equation}\label{cor_Basic_coder_eq_01:equality} \Hat
{D}^\ast\Intfset{\Phi}(\ox)(y^\ast)={D}^\ast\Intfset{\Phi}(\ox)(y^\ast)=\int_T{D}^\ast\Phi_t\big(\ox\big)(y^\ast)d\mu. \end{equation} \end{theorem}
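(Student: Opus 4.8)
The plan is to sandwich all three objects between one another, so that a single nontrivial inclusion closes the chain. The inclusion $\Hat{D}^\ast\Intfset{\Phi}(\ox)(y^\ast)\subset D^\ast\Intfset{\Phi}(\ox)(y^\ast)$ is automatic since $\Hat N\subset N$, and Corollary~\ref{cor_Basic_coder} already supplies $D^\ast\Intfset{\Phi}(\ox)(y^\ast)\subset\int_T D^\ast\Phi_t(\ox)(y^\ast)\,d\mu$. The assumed coderivative regularity of $\Phi_t$ at $(\ox,\ooy(t))$ lets me rewrite the integrand as $D^\ast\Phi_t(\ox)(y^\ast)=\Hat{D}^\ast\Phi_t(\ox)(y^\ast)$ for a.e.\ $t$. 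Hence it remains only to verify the \emph{reverse} inclusion $\int_T\Hat{D}^\ast\Phi_t(\ox)(y^\ast)\,d\mu\subset\Hat{D}^\ast\Intfset{\Phi}(\ox)(y^\ast)$, which is the sole part carrying content; combining it with the above displays forces all three sets to coincide.

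To prove this reverse inclusion I would argue directly from definition \eqref{rnc}. Fix $x^\ast=\int_T\x^\ast(t)\,d\mu$ with a measurable selection $\x^\ast(t)\in\Hat{D}^\ast\Phi_t(\ox)(y^\ast)$, i.e.\ $(\x^\ast(t),-y^\ast)\in\Hat N\big((\ox,\ooy(t));\gph\Phi_t\big)$ a.e.; here $\oy=\int_T\ooy(t)\,d\mu$. Taking any sequence $(x_k,y_k)\to(\ox,\oy)$ with $(x_k,y_k)\in\gph\Intfset{\Phi}$ and selections $\y_k(t)\in\Phi_t(x_k)$ satisfying $y_k=\int_T\y_k(t)\,d\mu$, I express the defining difference quotient as $\frac{1}{N_k}\int_T a_t^k\,d\mu$, where $N_k:=\|(x_k-\ox,y_k-\oy)\|$ and $a_t^k:=\la\x^\ast(t),x_k-\ox\ra-\la y^\ast,\y_k(t)-\ooy(t)\ra$ (the inner product passing through the Aumann integral). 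Two quantitative facts drive the estimate. First, the \emph{integrable local Lipschitz} property combined with the hypothesis $\Phi_t(\ox)=\{\ooy(t)\}$ gives $\|\y_k(t)-\ooy(t)\|\le\ell(t)\|x_k-\ox\|$ a.e., so $(x_k,\y_k(t))\to(\ox,\ooy(t))$ in $\gph\Phi_t$; the regular-normal membership then forces $\limsup_k a_t^k/N_k\le 0$ for a.e.\ $t$. Second, the coderivative characterization of Theorem~\ref{Prop47} yields $\|\x^\ast(t)\|\le\ell(t)\|y^\ast\|$, which with the Lipschitz bound produces the integrable domination $a_t^k/N_k\le 2\ell(t)\|y^\ast\|\in\Leb^1(T,\R_+)$.

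With these in hand the \emph{reverse Fatou lemma} finishes the job: $\limsup_k\frac{1}{N_k}\big[\la x^\ast,x_k-\ox\ra-\la y^\ast,y_k-\oy\ra\big]=\limsup_k\int_T(a_t^k/N_k)\,d\mu\le\int_T\limsup_k(a_t^k/N_k)\,d\mu\le 0$, and since the sequence is arbitrary this is precisely $x^\ast\in\Hat{D}^\ast\Intfset{\Phi}(\ox)(y^\ast)$. The main obstacle is the \emph{non-uniformity in $t$} of the inequality defining $\Hat N\big((\ox,\ooy(t));\gph\Phi_t\big)$: the rate at which $a_t^k/N_k$ turns nonpositive varies with $t$, so one cannot integrate a single neighborhood estimate. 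The remedy is to abandon uniformity and carry only the pointwise $\limsup$ across the integral sign via the majorant $2\ell(t)\|y^\ast\|$; the singleton hypothesis $\Phi_t(\ox)=\{\ooy(t)\}$ is exactly what pins the targets $\ooy(t)$ down, so that the Lipschitz estimate simultaneously furnishes the a.e.\ convergence and the integrable dominating function.
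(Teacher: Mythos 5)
Your reduction coincides with the paper's: both arguments note that $\Hat D^\ast\subset D^\ast$ always holds, that Corollary~\ref{cor_Basic_coder} gives ${D}^\ast\Intfset{\Phi}(\ox)(y^\ast)\subset\int_T{D}^\ast\Phi_t(\ox)(y^\ast)d\mu$, and that coderivative regularity identifies the integrand with $\Hat{D}^\ast\Phi_t(\ox)(y^\ast)$, so everything rests on the single reverse inclusion $\int_T\Hat{D}^\ast\Phi_t(\ox)(y^\ast)d\mu\subset\Hat{D}^\ast\Intfset{\Phi}(\ox)(y^\ast)$. Your proof of that inclusion, however, is genuinely different from the paper's and is more elementary. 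The paper works with primal objects: it first establishes the subderivative estimate \eqref{ineqsubd}, namely $\int_{T}\overline{\co}\,d_{\gph \Phi_t }(\ox,\ooy(t))(u,\v(t))d\mu\le d_{\gph\Intfset{\Phi}}(\ox,\oy)(u,v)$ for a suitable selection $\v$, which requires extracting a \emph{weakly} $\Leb^1$-convergent subsequence of the normalized selections $\v_k$ and invoking Balder's lower-semicontinuity theorem on the nonatomic part (this is exactly where the convex closure $\overline{\co}$ and the convexity hypothesis in \eqref{convex_cond_set} are consumed) together with Fatou's lemma on the atomic part; it then dualizes through the subderivative representation \eqref{rep_reg_sub} of regular normals. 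You instead verify the limsup in definition \eqref{rnc} directly along an arbitrary sequence $(x_k,y_k)\st{\gph\Intfset{\Phi}}{\to}(\ox,\oy)$: the singleton hypothesis $\Phi_t(\ox)=\{\ooy(t)\}$ combined with the integrable local Lipschitz inclusion pins every selection via $\|\y_k(t)-\ooy(t)\|\le\ell(t)\|x_k-\ox\|$, so the selections converge \emph{pointwise} a.e.\ and no weak compactness is needed at all, while Theorem~\ref{Prop47} yields $\|\x^\ast(t)\|\le\ell(t)\|y^\ast\|$ and hence the integrable majorant $2\ell(t)\|y^\ast\|$; a single application of the reverse Fatou lemma then treats the atomic and nonatomic parts uniformly. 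Your route thus bypasses subderivatives, convexification, weak $\Leb^1$ compactness, and Balder's theorem, and it never uses the convexity of $\Phi_t$ near $\ox$ for this step; what the paper's route buys is the intermediate primal estimate \eqref{ineqsubd}, which has independent interest. One detail you should spell out: the normalizing denominator in \eqref{rnc} applied to $\gph\Phi_t$ is $\|(x_k-\ox,\y_k(t)-\ooy(t))\|$, not your $N_k$; since the pinning estimate gives $\|(x_k-\ox,\y_k(t)-\ooy(t))\|\le(1+\ell(t))\|x_k-\ox\|\le(1+\ell(t))N_k$ and only the indices with $a_t^k>0$ matter for the upper limit, switching denominators is harmless, but it deserves a line in a complete write-up.
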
 \vspace*{-0.05in} \begin{proof}  Taking into
account the inclusion of Theorem~{\rm\ref{Theo_Basic_coder}}, it is sufficient to show that \begin{equation}\label{cod-eq1} \int_T{D}^\ast\Phi_t\big(\ox,\ooy(t)\big)(y^\ast)d\mu\subset
\Hat{D}^\ast\Intfset{\Phi}(\ox)(y^\ast). \end{equation} First we check that for all $(u,v)\in\X\times \Y$ there exists $\v\in\Leb^1(T,\Y)$ with $v=\int_{T}\v(t)d\mu$ such that
\begin{align}\label{ineqsubd} \int_{T}\overline{\co}d_{\gph \Phi_t }(\ox,\ooy(t))(u,\v(t))d\mu\le d_{\gph\Intfset{\Phi}}(\ox,\oy)(u,v), \end{align} where $d_{\gph\Intfset{\Phi}}$ denotes the
subderivative \eqref{subderivative} of the indicator function of $\gph\Intfset{\Phi}$, and where $\overline{\co}d_{\gph\Phi_t}$ is the convex closure of this subderivative. This claim is obvious
if right-hand side of \eqref{ineqsubd} is $\infty$. Otherwise, suppose that $d_{\gph\Intfset{\Phi}}(\ox,\oy)(u,v)<\infty$, we find sequences $s_k\dn 0$ and $(u_k,v_k)\to 0$ as $k\to\infty$ such
that \begin{align*} d_{\gph\Intfset{\Phi}}(\ox,\oy)(u,v)=\lim\limits_{k\to\infty}\frac{\delta_{\gph\Intfset{\Phi}}(\ox + s_k u_k,\oy_k+s_k v_k)}{s_k}.\end{align*} The above inequality implies
that whenever $k\in\N$ is sufficiently large there exists $\w_k(t)\in\Phi_t(\ox + s_ku_k)$ for a.e. $t\in T$ such that $\int_T\w_k(t)d\mu =\oy + s_k v_k $. By the integrable Lipschitz continuity
of $\Phi$, it can be assumed that for a.e. $t \in T$ and all $k\in\N$ we have the representation \begin{align*}\w_k(t)=\ooy(t)+s_k\v_{k}(t)\;\text{ with some measurable function }\;\v_k(t)\in\|
u_k\|\ell(t) \mathbb{B}, \end{align*} which tells us that $\|\v_{ k}(t)\|\le\max\big\{\|u_k\|\;\big|\;k\in\N\}\ell(t)$. This implies that a subsequence of $\{\v_k\}$ converges weakly to some $\v
\in\Leb^1(T,\Y)$. Thus $v_k=\int_{T}\v_{k}(t)d\mu(t)\to \int_{T} \v(t) d\mu $ along this subsequence, and therefore we arrive at $v=\int_{T} \v(t) d\mu$.

Consider further the integrand $\Psi\colon T\times[0,\infty]\times\X\times\Y\to\Rex$ defined by \begin{align} \Psi(t,r,a,b):= \left\{ \begin{array}{cc}\disp\frac{ \delta_{\gph \Phi_t } ( \ox + r
a ,\ooy(t)+rb)}{r} &\text{ if } r>0,\\ &\\ \overline{\co} d_{\gph \Phi_t } (\ox,\ooy(t)) (a,b )  &\text{ if } r=0. \end{array}\right. \end{align} It is easy to see that $\Psi(t,\cdot)$ is lower
semicontinuous for all $t \in T$, and also that for every $(t,r,a) \in T_{na} \times [0,\infty) \times \X $ the function $\Psi(t,r,a,\cdot)$ is convex. Applying \cite[Theorem~2.1]{bal} on $T_{na}
$, we get that \begin{align*}\int_{T_{pa}} \overline{\co} d_{\gph
\Phi_t}\big(\ox,\ooy(t)\big)\big(u,\v(t)\big)d\mu&=\int_{T_{na}}\Psi\big(t,0,u,\v(t)\big)d\mu\le\liminf\limits_{k\to\infty}\int_{T_{na}}\Psi\big(t,s_k,u_k,\v_k(t)\big)d\mu=0. \end{align*} On the
other hand, noticing that $\v_{k}$ converges pointwise to $\v$ on $T_{pa}$, we deduce from Fatou's lemma and the lower semicontinuity of $\Psi$ that
\begin{align*}\int_{T_{pa}}\overline{\co}d_{\gph
\Phi_t}\big(\ox,\ooy(t)\big)\big(u,\v(t)\big)d\mu&=\int_{T_{pa}}\Psi\big(t,0,u,\v(t)\big)d\mu\le\int_{T_{na}}\liminf\limits_{k\to\infty}\Psi\big(t,s_k,u_k,\v_k(t)\big)\\&\le\liminf\limits_{k\to
\infty}\int_{T_{na}}\Psi\big(t,s_k,u_k,\v_k(t)\big)d\mu= 0,\end{align*} which therefore verifies the estimate in \eqref{ineqsubd}. To proceed, pick
$x^\ast\in\int_T{D}^\ast\Phi_t\big(\ox,\ooy(t)\big)(y^\ast)d\mu$ and then find, by using the assumed graphical regularity of the integrand $\Phi_t$, a measurable selection $\x(t)\in
{D}^\ast\Phi_t\big(\ox,\ooy(t)\big)(y^\ast)=\Hat{D}^\ast\Phi_t\big(\ox,\ooy(t)\big)(y^\ast)$ such that $x^\ast=\int_T \x^\ast(t) d\mu$. This implies due to \eqref{rep_reg_sub} that \begin{align*}
\langle \x^\ast (t) , u\rangle -\langle y^\ast,v\rangle\le d_{\gph\Phi_t}\big(\ox,\ooy(t)\big)(u,u)\; \text{ for all }\;(u,v)\in\X\times\Y, \end{align*} which ensures by definition of the convex
closure the estimate \begin{align*} \langle \x^\ast (t) , u\rangle  - \langle y^\ast , v\rangle \leq  \overline{\co} d_{\gph \Phi_t }\big(\ox,\ooy(t)\big) (u,v)\; \text{ for all }\;(u,v) \in \X
\times \Y. \end{align*} Fixing $(u,v) \in \X\times \Y$, find $\v$ satisfying \eqref{ineqsubd}, and so \begin{align*} \langle x^\ast, u\rangle  - \langle y^\ast , v\rangle &= \int_{{T}} \left(
\langle \x^\ast (t) , u\rangle  - \langle y^\ast , \v(t) \rangle\right) d\mu \\ & \le\int_{T} \overline{\co} d_{\gph \Phi_t } \big(\ox,\ooy(t)\big)\big(u,\v(t)\big) d\mu \\&\le d_{\gph
\Intfset{\Phi}} (\ox,\oy) (u,v). \end{align*} Invoking now \eqref{rep_reg_sub} yields $x^\ast\in \Hat{D}^\ast \Intfset{\Phi} \ox,\oy)(y^\ast)$, which verifies \eqref{cod-eq1} and thus completes
the proof. \end{proof}\vspace*{-0.07in}

The coderivative Leibniz rules obtained above in terms of the limiting coderivative and the coderivative characterizations of Lipschitzian properties of deterministic multifunctions discussed in
Section~\ref{sec3} allow us to establish efficient conditions for {\em Lipschitz stability} of expected-integral mappings. We present here the following result ensuring the Lipschitz-like
property of the multifunction $\Intfset{\Phi}$.\vspace*{-0.03in} \begin{proposition}[\bf Lipschitz-like property of expected-integral multifunctions]\label{eim-lip} In the setting of
Theorem~{\rm\ref{Theo_Basic_coder}}, assume in addition that \begin{equation}\label{eim-lip1} \int_T{D}^\ast\Phi_t\big(\ox,\ooy(t)\big)(0)d\mu=\big\{0\big\}\;\mbox{ for all
}\;\ooy\in\mathcal{S}_{\Phi}(\ox,\oy). \end{equation} Then the expected-integral multifunction $\Intfset{\Phi}$ from \eqref{def:set-valued:Exp} is Lipschitz-like around $(\ox,\oy)$.
\end{proposition}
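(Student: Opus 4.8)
The plan is to establish the Lipschitz-like property of the \emph{deterministic} multifunction $\Intfset{\Phi}$ by invoking the coderivative/Mordukhovich criterion \eqref{cod-cr} (see also \cite[Theorem~9.40]{rw}): once $\gph\Intfset{\Phi}$ is known to be locally closed around $(\ox,\oy)$, the criterion reduces the whole claim to the kernel condition $D^\ast\Intfset{\Phi}(\ox,\oy)(0)=\{0\}$. The heart of the matter is this kernel condition, and it follows at once from the coderivative Leibniz rule already at our disposal. Indeed, applying the inclusion \eqref{lim-point} of Theorem~\ref{Theo_Basic_coder} with $y^\ast=0$ gives
\begin{equation*}
D^\ast\Intfset{\Phi}(\ox,\oy)(0)\subset\bigcup_{\ooy\in\mathcal{S}_{\Phi}(\ox,\oy)}\int_T D^\ast\Phi_t\big(\ox,\ooy(t)\big)(0)\,d\mu=\{0\},
\end{equation*}
where the last equality is precisely the additional assumption \eqref{eim-lip1}. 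Since $(0,0)$ lies in every normal cone, we have $0\in D^\ast\Intfset{\Phi}(\ox,\oy)(0)$, so the reverse inclusion is trivial and $D^\ast\Intfset{\Phi}(\ox,\oy)(0)=\{0\}$ as required.

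It then remains to verify the local closedness of $\gph\Intfset{\Phi}$ demanded by the criterion. Local boundedness comes for free, since the integrable boundedness in \eqref{convex_cond_set} yields $\Intfset{\Phi}(x)\subset\big(\int_T\kappa(t)\,d\mu\big)\mathbb{B}$ for all $x\in\mathbb{B}_\rho(\ox)$. For closedness I would take a sequence $(x_k,y_k)\to(x_0,y_0)$ in $\gph\Intfset{\Phi}$ with $x_0\in\inter\mathbb{B}_\rho(\ox)$, choose measurable selections $\y_k(t)\in\Phi_t(x_k)$ with $\int_T\y_k\,d\mu=y_k$, and exploit $\|\y_k(t)\|\le\kappa(t)$ a.e., which makes $\{\y_k\}$ uniformly integrable and hence relatively weakly compact in $\Leb^1(T,\Y)$ by Dunford--Pettis. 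Extracting a weak limit $\y_k\rightharpoonup\y$ gives $y_0=\int_T\y\,d\mu$, so the remaining task is to show $\y(t)\in\Phi_t(x_0)$ for a.e.\ $t$.

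To identify the weak limit I would treat the two parts of $T$ separately. On the atomic part $T_{pa}$ every selection is constant on each atom, weak convergence reduces to convergence of these constants, and the closed graph of $\Phi_t$ forces the limit into $\Phi_t(x_0)$. On the nonatomic part $T_{na}$ I would invoke Theorem~\ref{nonatomic:charact}, by which the assumed integrable quasi-Lipschitzian property makes $\Phi$ integrably locally Lipschitzian on $T_{na}$; hence by Proposition~\ref{prop:dist:car}(iii) the maps $x\mapsto\dist\big(y;\Phi_t(x)\big)$ are $\ell(t)$-Lipschitz, so that $\dist\big(\y_k(t);\Phi_t(x_0)\big)\le\ell(t)\|x_k-x_0\|$. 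Upgrading the weak limit to strong a.e.\ convergence of convex combinations via Mazur's theorem and using the convexity of $\Phi_t(x_0)$ (again from \eqref{convex_cond_set}) to control $\dist\big(\cdot;\Phi_t(x_0)\big)$ along these combinations, I would reach $\dist\big(\y(t);\Phi_t(x_0)\big)=0$, i.e.\ $\y(t)\in\Phi_t(x_0)$. Combining both parts yields $y_0\in\Intfset{\Phi}(x_0)$ and the sought closedness. I expect this closedness step—reconciling the weak $\Leb^1$-limit of the selections with pointwise membership in $\gph\Phi_t$ at the shifted point $x_0$—to be the main obstacle, whereas the coderivative computation is an immediate consequence of Theorem~\ref{Theo_Basic_coder} together with hypothesis \eqref{eim-lip1}.
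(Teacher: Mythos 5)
Your proposal is correct and takes essentially the same route as the paper: the paper's entire proof is the observation that the Leibniz rule \eqref{lim-point} applied with $y^\ast=0$, combined with hypothesis \eqref{eim-lip1}, forces $D^\ast\Intfset{\Phi}(\ox,\oy)(0)=\{0\}$, after which the coderivative criterion \eqref{cod-cr} gives the Lipschitz-like property. Your additional verification of the local closedness of $\gph\Intfset{\Phi}$ (via integrable boundedness, Dunford--Pettis, and Mazur on the nonatomic part) is sound and supplies a hypothesis of that criterion which the paper leaves implicit.
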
\vspace*{-0.05in} {\bf Proof}. We readily deduce this statement from the coderivative Leibnitz rule \eqref{lim-point} and the coderivative criterion \eqref{cod-cr} for the
Lipschitz-like property of deterministic set-valued mappings. $\h$\vspace*{0.07in}

We conclude this section with a direct consequence of the obtained coderivative Leibniz rules of the inclusion and equality types to derive the corresponding subdifferential Leibniz rule for
expected-integral functionals. The results in this vein can be found in different settings in \cite{chp19,mor-sag18,chp20,chp192}. \begin{proposition}\label{firstorderestimation} Let
$\varphi\colon T\times\R^n\to\Rex$ be a normal integrand. Take $\ox\in\X$ and assume that there exist a positive integrable function $\ell\colon T \to(0,\infty)$ and a number $\epsilon>0$ such
that \begin{align*}|\ph(t,x)-\ph(t,y)|\le\ell(t) \| x -y\|\;\text{ for all }\;x,y\in\mathbb{B}_\epsilon(\ox)\;\mbox{ and a.e. }\; t\in T. \end{align*} Then we have the limiting subdifferential
Leibniz rule \begin{equation}\label{subL} \partial\Intf{\ph}(\ox)\subset\int_T\partial\ph_t(\ox)d\mu,\end{equation} where $\ph_t(x):=\ph(t,x)$. Furthermore, \eqref{subL} holds as equality if
$\ph_t$ is lower regular at $\ox$ for a.e. $t\in T$. \end{proposition}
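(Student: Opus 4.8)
The plan is to realize $\Intf{\ph}$ as a single-valued expected-integral mapping and then read off the subdifferential result from the coderivative Leibniz rules of Corollary~\ref{cor_Basic_coder} and Theorem~\ref{codL-eq} via the coderivative scalarization formula for Lipschitzian mappings. First I would set $\Y:=\R$ (i.e.\ $m=1$) and introduce the single-valued integrand $\Phi_t(x):=\{\ph_t(x)\}$, so that $\Intfset{\Phi}=\Intf{\ph}$ and $\Intfset{\Phi}(\ox)=\{\Intf{\ph}(\ox)\}$ is a singleton. Since $\ph$ is a normal integrand and $\ph_t$ is continuous (indeed Lipschitz) on $\B_\epsilon(\ox)$, a routine localization away from $\B_\epsilon(\ox)$ makes $t\mapsto\gph\Phi_t$ a measurable closed-valued multifunction near $\ox$, so that $\Phi$ qualifies as a set-valued normal integrand for the purposes of the local constructions $\partial\ph_t(\ox)$ and $D^\ast\Phi_t(\ox)$. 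The remaining hypotheses of Corollary~\ref{cor_Basic_coder} are then checked directly: the convexity requirement in \eqref{convex_cond_set} is automatic because the values $\Phi_t(x)$ are singletons; the integrable boundedness in \eqref{convex_cond_set} follows from the Lipschitz estimate $|\ph_t(x)|\le|\ph_t(\ox)|+\ell(t)\epsilon$ on $\B_\epsilon(\ox)$, where $t\mapsto\ph_t(\ox)$ is summable because $\Intf{\ph}(\ox)$ is finite; and the integrable local Lipschitzian property \eqref{eq_definition_Int_loc} of $\Phi$ reduces, for single-valued $\Phi_t$, precisely to the assumed bound $|\ph_t(x)-\ph_t(y)|\le\ell(t)\|x-y\|$.

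Granting these, Corollary~\ref{cor_Basic_coder} applies and yields, for every $y^\ast\in\R$,
\begin{equation*}
D^\ast\Intfset{\Phi}(\ox)(y^\ast)\subset\int_T D^\ast\Phi_t(\ox)(y^\ast)d\mu.
\end{equation*}
I would then specialize to $y^\ast=1$ and invoke the coderivative scalarization formula for locally Lipschitzian single-valued mappings (see \cite[Theorem~1.90]{m06}), namely $D^\ast f(\ox)(1)=\partial f(\ox)$. Applied to $\Intf{\ph}$, which is locally Lipschitzian around $\ox$ with modulus $\int_T\ell\,d\mu<\infty$, it gives $D^\ast\Intfset{\Phi}(\ox)(1)=\partial\Intf{\ph}(\ox)$; applied to each $\ph_t$ it gives $D^\ast\Phi_t(\ox)(1)=\partial\ph_t(\ox)$ for a.e.\ $t\in T$. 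Substituting these identities into the displayed inclusion produces exactly the Leibniz rule \eqref{subL}.

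For the equality under lower regularity, the key observation is that a single-valued locally Lipschitzian $f$ also satisfies the regular counterpart $\Hat D^\ast f(\ox)(1)=\Hat\partial f(\ox)$, which follows from the analytic description \eqref{rep_reg_sub}--\eqref{subderivative} of the regular subdifferential once one notes that the Lipschitz property renders the graphical and epigraphical difference quotients comparable. Hence lower regularity of $\ph_t$ at $\ox$, i.e.\ $\Hat\partial\ph_t(\ox)=\partial\ph_t(\ox)$, translates into the coderivative regularity $\Hat D^\ast\Phi_t(\ox)(1)=D^\ast\Phi_t(\ox)(1)$. Together with $\Phi_t(\ox)=\{\ooy(t)\}$ being a singleton, this is precisely the hypothesis of Theorem~\ref{codL-eq}, whose conclusion $\Hat D^\ast\Intfset{\Phi}(\ox)(1)=D^\ast\Intfset{\Phi}(\ox)(1)=\int_T D^\ast\Phi_t(\ox)(1)d\mu$ scalarizes, via the same formula, to the equality $\partial\Intf{\ph}(\ox)=\int_T\partial\ph_t(\ox)d\mu$.

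The main obstacle I anticipate is not the structure of the argument but the bookkeeping needed to embed the scalar integrand $\ph$ into the set-valued framework: verifying that the single-valued $\Phi_t$ is a genuine normal integrand with closed graph (requiring the local modification outside $\B_\epsilon(\ox)$), confirming the integrable boundedness condition in \eqref{convex_cond_set}, and making the coderivative-to-subdifferential passage rigorous in both the basic and the regular cases. Once these scalarization identities are in place, everything else follows mechanically from the already established Corollary~\ref{cor_Basic_coder} and Theorem~\ref{codL-eq}.
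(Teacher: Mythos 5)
Your proposal is correct and takes essentially the same route as the paper's own proof: the inclusion comes from applying Corollary~\ref{cor_Basic_coder} to the single-valued integrand $\Phi_t(x)=\{\ph_t(x)\}$, and the equality comes from the scalarization identities at $y^\ast=1$ combined with lower regularity and the equality-type rule of Theorem~\ref{codL-eq} (the paper cites \cite[Theorem~1.23]{m18} where you cite \cite[Theorem~1.90]{m06}, and it gets by with the one-sided inclusion $\Hat D^\ast\ph_t(\ox)(1)\supset\Hat\partial\ph_t(\ox)$ instead of your regular-coderivative equality, but the logic is identical). Your careful verification of the hypotheses of Corollary~\ref{cor_Basic_coder} is just a more explicit account of what the paper treats as immediate.
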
\vspace*{-0.05in} {\bf Proof}. The inclusion result in \eqref{subL} follows directly from Corollary~\ref{cor_Basic_coder}. To
verify the equality therein, observe that $D^\ast\ph(\ox)(1)=\partial\ph(\ox)$ for real-valued l.s.c.\ functions (see \cite[Theorem~1.23]{m18}), and that we always have $\Hat
D^\ast\ph(\ox)(1)\supset\Hat \partial\ph(\ox)$. Invoking finally the assumed lower regularity verifies the equality in \eqref{subL} and thus completes the proof of the proposition.
$\h$\vspace*{0.05in}

\section{Lipschitz Stability and Coderivatives of Random Mappings with Composite Integrands}\label{sec5}\sce\vspace*{-0.1in}

This section is devoted to the study of expected-integral multifunctions $\Intfset{\Phi}$ from \eqref{def:set-valued:Exp} with $\Phi\colon T\times\X\tto\Y$ generated by {\em composite} set-valued
normal integrands of the type \begin{equation}\label{compos} \Phi_t(x)=F\big(t,g_t(x)\big)\;\text{ for all }\;x\in U\;\text{ and a.e. }\;t\in T \end{equation} near some point
$\ox\in\dom\Intfset{\Phi}$, where $F\colon T\times\Z\tto\Y$ and $g\colon T\times U\to\Z$. First we reveal general conditions on $F$ and $g$ ensuring that $\Phi$ is the {\em integrably locally
Lipschitzian} in the sense of \eqref{eq_definition_Int_loc} around the reference point. Then we introduce a new notion of {\em integrable amenable multifunctions}, which expands to (both random
and deterministic) set-valued mappings the fundamental notion of amenable extended-real-valued deterministic functions that was comprehensively studied in \cite{rw}. The Leibniz-type rules
obtained for such compositions give us {\em pointwise} upper estimates of both regular and limiting coderivatives of \eqref{def:set-valued:Exp} via compositions of limiting coderivatives under
the integral sign. Finally, we apply the obtained results to evaluate coderivatives of expected-integral multifunctions generated by feasible set mappings in constrained {\em stochastic
programming}.\vspace*{0.05in}

Let us start with establishing {\em Lipschitz stability} of expected-integral multifunctions with composite integrals \eqref{compos}. The following lemma for deterministic set-valued mappings with convex
graphs and its very simple proof are of their own interest. This result can be treated as a {\em quantitative} counterpart (without the closed-graph assumption) of \cite[Theorems~5.9 and 5.12]{m93}.
Recall \cite{r} that a set-valued mapping $G\colon\X\tto\Y$ is {\em sub-Lipschitzian} around $\ox\in\dom G$ if for every compact set $V\subset\Y$ there exist a neighborhood $U$ of $\ox$ and a number
$\ell\ge 0$ such that we have the inclusion
\begin{equation}\label{sub-lip}
G(x)\cap V\subset G(x')+\ell\|x-x'\|\B\;\mbox{ for all }\;x,x'\in U.
\end{equation}
The coderivative characterization from \cite[Theorem~5.9(b)]{m93} of the sub-Lipschitzian property \eqref{sub-lip} for (locally) closed-graph multifunctions reads as follows: $G$ is sub-Lipschitzian
around $\ox$ if and only if for any compact set $V\subset\Y$ there exist $\eta>0$ and $\ell\ge 0$ such that
\begin{equation}\label{sub-lip1}
\sup\big\{\|x^*\|\;\big|\;x^*\in D^*G(x,y)(y^*)\big\}\le\ell\|y^*\|\;\mbox{ whenever }\;y^*\in\Y
\end{equation}
for all $x\in\B_\eta(\ox)$ and $y\in G(x)\cap V$. Here is our quantitative version for convex-graph multifunctions.\vspace*{-0.05in}

\begin{lemma}[coderivative estimate for convex-graph multifunctions]\label{lemma:locLip} Let $G\colon\X\tto\Y$ be a convex-graph multifunction with $\ox\in\dom G$ for which there exist positive numbers
$M$ and $\eta$ such that
\begin{equation}\label{sub-ass}
{\rm dist}\big(0;G(x)\big)\le M\;\text{ whenever }\;x\in\mathbb{B}_{2\eta}(\bar{x}).
\end{equation}
Then for all $x\in\mathbb{B}_{\eta}(\bar{x})$, all $y\in G(x)$, and all $y^*\in\Y$ we have the coderivative estimate
\begin{equation}\label{est+}
\sup\big\{\|x^\ast\|\;\big|\;x^\ast\in D^\ast G(x,y)(y^\ast)\big\}\le\left(\frac{M+\|y\|}{\eta}\right)\|y^\ast\|.
\end{equation}
If in addition the graph of $G$ is locally closed, then this multifunction is sub-Lipschitzian around $\ox$.
\end{lemma}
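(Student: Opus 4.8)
The plan is to exploit the convexity of $\gph G$, which forces both the regular normal cone \eqref{rnc} and the limiting normal cone \eqref{lnc} to reduce to the normal cone of convex analysis. Consequently, at any $(x,y)\in\gph G$ the coderivative set $D^*G(x,y)(y^*)$ consists exactly of those $x^*\in\X$ satisfying the dual inequality
\[
\langle x^*, u-x\rangle \le \langle y^*, v-y\rangle \quad\text{for all }\;(u,v)\in\gph G .
\]
First I would record this representation and observe that the standing hypothesis \eqref{sub-ass} forces $\mathbb{B}_{2\eta}(\ox)\subset\dom G$, so that $G(u)\ne\emp$ for every $u$ in that ball and the distance in \eqref{sub-ass} is genuinely finite.

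For the quantitative estimate \eqref{est+}, fix $x\in\mathbb{B}_{\eta}(\ox)$, $y\in G(x)$, $y^*\in\Y$, and $x^*\in D^\ast G(x,y)(y^\ast)$. The key step is a directional probing argument: for an arbitrary unit vector $w\in\X$ set $u:=x+\eta w$, which lies in $\mathbb{B}_{2\eta}(\ox)$ because $\|u-\ox\|\le\|x-\ox\|+\eta\le 2\eta$. By \eqref{sub-ass} and the definition of the distance, for every $\epsilon>0$ there is $v_\epsilon\in G(u)$ with $\|v_\epsilon\|\le M+\epsilon$. Substituting the pair $(u,v_\epsilon)\in\gph G$ into the dual inequality above gives
\[
\eta\langle x^*, w\rangle=\langle x^*, u-x\rangle \le \langle y^*, v_\epsilon - y\rangle \le \|y^*\|\big(\|v_\epsilon\|+\|y\|\big)\le \|y^*\|\big(M+\epsilon+\|y\|\big).
\]
Dividing by $\eta$, letting $\epsilon\downarrow 0$, and then taking the supremum over all unit vectors $w$ (which recovers $\|x^*\|$) yields precisely \eqref{est+}.

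For the sub-Lipschitzian conclusion under local closedness of $\gph G$, I would invoke the coderivative characterization \eqref{sub-lip1} of that property. Given any compact set $V\subset\Y$, choose $r>0$ with $V\subset r\mathbb{B}$; then every $y\in G(x)\cap V$ obeys $\|y\|\le r$, so \eqref{est+} delivers
\[
\sup\big\{\|x^*\|\;\big|\;x^*\in D^*G(x,y)(y^*)\big\}\le\frac{M+r}{\eta}\,\|y^*\| \quad\text{for all }\;y^*\in\Y,
\]
uniformly over $x\in\mathbb{B}_{\eta}(\ox)$ and $y\in G(x)\cap V$. Taking the neighborhood to be $\mathbb{B}_{\eta}(\ox)$ and the modulus $\ell:=(M+r)/\eta$, the criterion \eqref{sub-lip1} certifies that $G$ is sub-Lipschitzian around $\ox$.

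I expect the only delicate point to be the opening step: verifying that the coderivative collapses to the convex-analytic dual inequality, and handling \eqref{sub-ass} with a possibly \emph{non-attained} infimum (hence the auxiliary $\epsilon$, since closed values are not assumed in the first assertion). Everything after that is an elementary one-line estimate, and the transition to the sub-Lipschitzian property is a direct application of the known characterization \eqref{sub-lip1}.
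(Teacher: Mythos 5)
Your proof is correct and follows essentially the same route as the paper: both exploit the convex-analytic description of the coderivative for convex-graph mappings, probe with test points $x+h$, $\|h\|\le\eta$, lying in $\mathbb{B}_{2\eta}(\ox)$, bound the right-hand side via \eqref{sub-ass}, and then deduce the sub-Lipschitzian property from the characterization \eqref{sub-lip1}. In fact your version is slightly more careful on two minor points the paper glosses over: you use an $\epsilon$-approximation instead of assuming the distance in \eqref{sub-ass} is attained (closed values are not assumed for the first assertion), and you make the sub-Lipschitz modulus $\ell=(M+r)/\eta$ genuinely uniform over the compact set $V$ rather than writing it with the varying quantity $\|y\|$.
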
\vspace*{-0.05in}
{\bf Proof}. Pick any $x\in\mathbb{B}_{\eta}(\bar{x})$, $y\in G(x)$, and $x^\ast\in D^\ast G(x,y)(y^\ast)$ with $y^\ast\in\Y$. Since the graph of $G$ is convex and \eqref{sub-ass} holds, we get the
inequalities (see, e.g., \cite[Proposition~1.7]{m18})
\begin{equation*}
\langle x^\ast,x+h-x\rangle\le\langle y^\ast,y_h-y\rangle\le\|y^\ast\|(M+\|y\|)\;\mbox{ for all }\;h\in\mathbb{B}_{\eta}(0),
\end{equation*}
where $y_h\in G(x+h)$ is such that $\| y_h\|={\rm dist}(0;G(x+h))$. This readily yields \eqref{est+}. Finally, the sub-Lipschitzian property of $G$ around $\ox$ follows from  \eqref{sub-lip1} and
\eqref{est+} with $\ell:=(M+\|y\|)/\eta$. $\h$\vspace*{0.05in}

The second lemma is more technical while being used below in the proofs of both theorems in this section. To proceed, we need the following assumption: for all $h\in\B_\eta(\ox)$ the function
\begin{equation}\label{a1}
t\mapsto{\rm dist}\big(0;F_t(g_t(\ox)+h)\big)\;\mbox{ is integrable on }\;T.
\end{equation}
\begin{lemma}[\bf perturbed distance estimate]\label{dist-lem} Let $F\colon T\times\Z\tto\Y$ be a convex normal integrand satisfying condition \eqref{a1}. Then there exist $\kappa\in\Leb^1(T,\R_+)$ and
$\eta_1\in(0,\eta)$ such that
\begin{equation}\label{a1+}
\dist\big(0;F_t(g_t(\bar x)+h)\big)\le\kappa(t)\;\text{ for all }\;h\in\eta_1\mathbb{B}\;\text{ and a.e. }\;t\in T.
\end{equation}
\end{lemma}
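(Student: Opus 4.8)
The plan is to use the convexity of $F$ in its second argument to reduce the desired uniform-in-$h$ bound to a bound at finitely many points, at which integrability is already guaranteed by \eqref{a1}. The key first step is to observe that for a.e.\ $t\in T$ the function
\[
\phi_t(h):=\dist\big(0;F_t(g_t(\ox)+h)\big),\qquad h\in\Z,
\]
is convex. Indeed, since $F$ is a convex normal integrand, $\gph F_t$ is convex for a.e.\ $t\in T$; for such $t$, any $z_1,z_2\in\Z$, any $y_i\in F_t(z_i)$, and any $\lambda\in[0,1]$, the convexity of $\gph F_t$ gives $(1-\lambda)y_1+\lambda y_2\in F_t\big((1-\lambda)z_1+\lambda z_2\big)$, so that $\dist\big(0;F_t((1-\lambda)z_1+\lambda z_2)\big)\le(1-\lambda)\|y_1\|+\lambda\|y_2\|$. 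Taking the infimum over $y_1\in F_t(z_1)$ and $y_2\in F_t(z_2)$ (the inequality being trivial when either value set is empty, by the convention $\dist(x,\emptyset)=\infty$) shows that $z\mapsto\dist(0;F_t(z))$ is convex, and composing with the affine shift $h\mapsto g_t(\ox)+h$ yields the convexity of $\phi_t$. Note that no regularity of $g$ is needed here, since the shift is affine in $h$ regardless.

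Next I would fix the finite set of test points. Choosing $\eta'\in(0,\eta)$ and setting $\eta_1:=\eta'/\sqrt{q}\in(0,\eta)$, I take the $2q$ points $h_j^{\pm}:=\pm\eta'e_j$ for $j=1,\dots,q$, where $e_j$ is the standard basis of $\Z=\R^q$. These all lie in $\eta\mathbb{B}$, so \eqref{a1} applies to each of them, and their convex hull equals the $\ell^1$-ball $\{z\in\Z\;|\;\|z\|_1\le\eta'\}$, which contains $\eta_1\mathbb{B}$ because $\|z\|_1\le\sqrt{q}\,\|z\|_2$. I then define
\[
\kappa(t):=\max\big\{\dist\big(0;F_t(g_t(\ox)+h_j^{\pm})\big)\;\big|\;j=1,\dots,q\big\}.
\]
By \eqref{a1} each of the finitely many functions $t\mapsto\dist\big(0;F_t(g_t(\ox)+h_j^{\pm})\big)$ is integrable, hence measurable; being a finite maximum of such functions, $\kappa$ is measurable and nonnegative, and $0\le\kappa(t)\le\sum_{j,\pm}\dist\big(0;F_t(g_t(\ox)+h_j^{\pm})\big)$ shows that $\kappa\in\Leb^1(T,\R_+)$.

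Finally, for a.e.\ $t\in T$ (those at which $\gph F_t$ is convex) and any $h\in\eta_1\mathbb{B}$, I would write $h$ as a convex combination $h=\sum_{j,\pm}\lambda_j^{\pm}h_j^{\pm}$ of the test points and invoke the convexity of $\phi_t$ established above to get
\[
\phi_t(h)\le\sum_{j,\pm}\lambda_j^{\pm}\phi_t(h_j^{\pm})\le\max_{j,\pm}\phi_t(h_j^{\pm})=\kappa(t),
\]
which is exactly the estimate \eqref{a1+}. The computation is otherwise routine, and the only genuine obstacle is the convexity observation of the first step: without it there is no mechanism converting the pointwise-in-$h$ integrability supplied by \eqref{a1} into a single integrable majorant valid simultaneously for all $h$ in a ball, and it is precisely the convex-normal-integrand hypothesis on $F$ that makes the reduction to finitely many points possible.
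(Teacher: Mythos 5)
Your proof is correct and follows essentially the same route as the paper: both arguments reduce the uniform estimate over a ball to finitely many points whose convex hull contains a neighborhood of the origin (the paper uses an abstract simplex with $0$ in its interior, you use the concrete cross-polytope with vertices $\pm\eta'e_j$), define $\kappa$ as the maximum of the finitely many integrable distance functions, and conclude via the convexity coming from $\gph F_t$. The only addition on your side is that you spell out the convexity of $h\mapsto\dist\big(0;F_t(g_t(\ox)+h)\big)$, which the paper leaves implicit when it asserts the Jensen-type estimate directly from graph convexity.
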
\vspace*{-0.03in}
{\bf Proof}. Take finitely many vectors $e_i\in\eta\mathbb{B}$ as $i\in I$ such that $0\in\inter\Delta$, where $\Delta$ is the simplex generated by vectors $\{e_i\}_{i\in I}$, i.e., given by
\begin{equation*}
\Delta:=\bigg\{\sum_{i\in I}\lambda_i e_i\;\bigg|\;\lambda_i\ge 0,\;\sum_{i\in I}\lambda_i=1\bigg\}.
\end{equation*}
Define the function $\kappa(t):=\max\{{\rm dist}(0;F_t(g_t(\bar x)+e_i))\;|\;i\in I\}$, which is integrable on $T$ by assumption \eqref{a1}, and then pick any $h$ in the form $h=\sum_{i\in I}
\lambda_i e_i$. Recalling that the graph of $F_t$ is convex for a.e.\ $t\in T$ ensures the fulfillment of the estimates
\begin{equation*}
{\rm dist}\big(0;F_t(g_t(\bar x)+h)\big)\le\sum_{i\in I}\lambda_i{\rm dist}\big(0;F_t(g_t(\bar x)+e_i)\big)\le\kappa(t)\;\mbox{ for a.e. }\;t\in T.
\end{equation*}
Choosing finally $\eta_1>0$ with $\eta_1\mathbb{B}\subset\Delta$ verifies \eqref{a1+} and thus completes the proof of the lemma. $\h$\vspace*{0.05in}

Having Lemmas~\ref{lemma:locLip} and \ref{dist-lem} in hand, we now establish the integrable local Lipschitzian property of convex composite normal integrands \eqref{compos} under some additional
assumptions.\vspace*{-0.01in} \begin{theorem}[\bf integrable local Lipschitzian property of composite integrands]\label{lip-comp} Let $\Phi\colon T\times\X\tto\Y$ be a normal integrand
represented in the composite form \eqref{compos} around $\ox\in\dom\Intfset{\Phi}$ on a complete finite measure space $(T,{\cal A},\mu)$, where $F\colon T\times\Z\tto\Y$ is a set-valued convex
normal integrand that is integrably bounded as in \eqref{convex_cond_set}, and where $g_t(x)$ is continuously differentiable around $\ox$ with the uniformly bounded gradients $\nabla g_t(x)$ for
all $x\in\B_\eta(\ox)\subset U$ and for a.e.\ $t\in T$ under the fulfillment of \eqref{a1}. Then the multifunction $\Phi$ is integrably locally Lipschitzian around $\ox$.
\end{theorem}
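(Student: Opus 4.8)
The plan is to reduce the integrable local Lipschitzian property of the composition $\Phi_t=F_t\circ g_t$ to the deterministic local Lipschitzian property of the convex-graph integrand $F_t$ on a neighborhood of $g_t(\ox)$, and then transport this estimate through the smooth inner mapping $g_t$ while keeping every radius uniform in $t$ and every modulus integrable. First I would invoke Lemma~\ref{dist-lem}: since $F$ is a convex normal integrand and the composite assumption \eqref{a1} holds, there are $\kappa\in\Leb^1(T,\R_+)$ and $\eta_1\in(0,\eta)$ with $\dist(0;F_t(g_t(\ox)+h))\le\kappa(t)$ for all $h\in\eta_1\B$ and a.e.\ $t\in T$, which is exactly \eqref{a1+}. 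This is precisely the distance hypothesis \eqref{sub-ass} of Lemma~\ref{lemma:locLip}, applied for a.e.\ fixed $t$ to the deterministic convex-graph multifunction $G:=F_t$ centered at $g_t(\ox)$, with $M:=\kappa(t)$ and radius $\eta_1/2$ (so that $2(\eta_1/2)=\eta_1$).

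Next I would apply Lemma~\ref{lemma:locLip} to obtain, for a.e.\ $t$, the limiting coderivative estimate \eqref{est+} for $F_t$ at every $z\in\B_{\eta_1/2}(g_t(\ox))$, every $w\in F_t(z)$, and every $y^\ast\in\Y$. Using the integrable boundedness of $F$ in the sense of \eqref{convex_cond_set}, say $F_t(z)\subset\beta(t)\B$ with $\beta\in\Leb^1(T,\R_+)$ on the relevant neighborhood, the modulus $(\kappa(t)+\|w\|)/(\eta_1/2)$ appearing in \eqref{est+} is dominated for a.e.\ $t$ by the integrable function $\ell_F(t):=2(\kappa(t)+\beta(t))/\eta_1$. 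Since $F_t$ is a (convex) normal integrand it has closed graph, so together with integrable boundedness this lets me invoke the deterministic coderivative characterization of local Lipschitz continuity recorded in \eqref{cod-loclip1} (see \cite[Theorem~5.11]{m93}): the uniform bound on $D^\ast F_t$ at all nearby points forces $F_t$ to be Hausdorff locally Lipschitzian on $\B_{\eta_1/2}(g_t(\ox))$ with modulus $\ell_F(t)$.

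It remains to pass to the composition. The hypothesis that $g_t$ is continuously differentiable around $\ox$ with uniformly bounded gradients supplies a constant $L$ with $\|\nabla g_t(x)\|\le L$ on $\B_\eta(\ox)$ for a.e.\ $t$, so each $g_t$ is $L$-Lipschitz there; setting $\delta:=\eta_1/(2L)$ guarantees $g_t(\B_\delta(\ox))\subset\B_{\eta_1/2}(g_t(\ox))$ uniformly in $t$. Composing the two Lipschitz estimates yields ${\rm haus}(\Phi_t(x),\Phi_t(x'))\le\ell_F(t)\|g_t(x)-g_t(x')\|\le L\,\ell_F(t)\|x-x'\|$ for all $x,x'\in\B_\delta(\ox)$ and a.e.\ $t$, with $\ell(t):=L\,\ell_F(t)\in\Leb^1(T,\R_+)$. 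By the Pompeiu--Hausdorff distance characterization of Proposition~\ref{prop:dist:car}(ii), this estimate is equivalent to the inclusion \eqref{eq_definition_Int_loc}, so $\Phi$ is integrably locally Lipschitzian around $\ox$ in the sense of Definition~\ref{integ-lip}.

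I expect the main obstacle to be the bookkeeping that keeps all constants uniform in $t$: the local Lipschitz neighborhood of $F_t$ is centered at the \emph{moving} point $g_t(\ox)$, yet its radius $\eta_1/2$ must come from the $t$-independent $\eta_1$ of Lemma~\ref{dist-lem}, its modulus $\ell_F(t)$ must be dominated by $\Leb^1$ functions, and the measurability of $t\mapsto\ell(t)$ must be preserved so that $\ell\in\Leb^1(T,\R_+)$. The other delicate point is the transfer from the coderivative estimate \eqref{est+} to a genuine Hausdorff local Lipschitz inclusion, rather than merely a Lipschitz-like or sub-Lipschitzian estimate; this is exactly where integrable boundedness is used to promote the bounded-coderivative condition to full local Lipschitz continuity.
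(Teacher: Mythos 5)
Your proposal is correct in substance but follows a genuinely different route from the paper's own proof. Both arguments start identically: Lemma~\ref{dist-lem} produces the integrable bound \eqref{a1+}, and Lemma~\ref{lemma:locLip} applied to $G:=F_t$ centered at $g_t(\ox)$ produces the coderivative estimate \eqref{est+}. They diverge afterwards. The paper stays at the dual (coderivative) level throughout: for $x$ near $\ox$ and $y\in\Phi_t(x)$ it notes that \eqref{est+} plus the criterion \eqref{cod-cr} makes $F_t$ Lipschitz-like around $(g_t(x),y)$, applies the deterministic coderivative chain rule \cite[Theorem~3.11(iii)]{m18} to the composition \eqref{compos} to write every $x^\ast\in D^\ast\Phi_t(x,y)(y^\ast)$ as $\nabla g_t(x)^\ast z^\ast$ with $z^\ast\in D^\ast F_t(g_t(x),y)(y^\ast)$, obtains $\|x^\ast\|\le\frac{4\ell\kappa(t)}{\eta}\|y^\ast\|$, and concludes via the coderivative characterization of Theorem~\ref{Prop47}. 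You instead go primal: you upgrade the coderivative bound to Hausdorff local Lipschitz continuity of $F_t$ itself on a ball in $\Z$, compose with the $L$-Lipschitz inner map, and finish with Proposition~\ref{prop:dist:car}. Your route avoids both the chain rule and Theorem~\ref{Prop47}, which is more elementary; the paper's route only ever uses \eqref{est+} at points $(g_t(x),y)$ with $y\in\Phi_t(x)\subset\kappa(t)\B$, and therefore never needs to control $F_t$ off the image of $g_t$ --- which is precisely where your argument has its one delicate step.

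That step is your assertion $F_t(z)\subset\beta(t)\B$ for \emph{all} $z\in\B_{\eta_1/2}(g_t(\ox))$. You need it twice: to meet the uniform boundedness hypothesis of \cite[Theorem~5.11]{m93} behind \eqref{cod-loclip1}, and to make the modulus $(\kappa(t)+\|w\|)/(\eta_1/2)$ in \eqref{est+} uniform over $w\in F_t(z)$. The paper's own proof interprets the integrable boundedness in \eqref{convex_cond_set} as a condition on $\Phi$, i.e.\ on $F_t(g_t(x))$ for $x$ near $\ox$, which says nothing about $F_t(z)$ at points $z$ outside the (possibly thin) image of $g_t$; under that reading your assertion is not automatic. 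It is, however, repairable from what is available: for $z\in\B_{\eta_1/2}(g_t(\ox))$ set $z'':=2g_t(\ox)-z\in\B_{\eta_1/2}(g_t(\ox))$ and use \eqref{a1+} to pick $w''\in F_t(z'')$ with $\|w''\|\le\kappa(t)$; then for any $w\in F_t(z)$ the graph convexity of $F_t$ gives $\tfrac12(w+w'')\in F_t(g_t(\ox))=\Phi_t(\ox)\subset\kappa(t)\B$, whence $\|w\|\le 3\kappa(t)$, so $\beta(t):=3\kappa(t)$ works. With this in hand, all your remaining steps go through, and for the $t$-uniform radius of the resulting Hausdorff estimate it is cleaner to quote the quantitative segment argument inside the proof of Theorem~\ref{Prop47} (coderivative bounds on a ball of radius $r$ yield the Lipschitz inclusion on the ball of radius $r/2$ with the same modulus) rather than the qualitative ``around the point'' statement of \cite[Theorem~5.11]{m93}; your final modulus $\ell(t)=2L(\kappa(t)+\beta(t))/\eta_1$ is then integrable as required.
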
\vspace*{-0.05in} {\bf Proof}. Select a set $\Hat T\subset T$ of full measure, and then take $\eta>0$ and $\kappa\in\Leb^1(T,\R_+)$ such that $F$ satisfies \eqref{a1+} for all
$t\in\Hat{T}$, and that $\Phi$ satisfies the integrable bounded condition from \eqref{convex_cond_set} over $\mathbb{B}_{\eta}(\ox)$ on $\Hat T$ with this function $\kappa$. Now choose
$\epsilon\in(0,\eta)$ ensuring the inequalities \begin{equation*} \sup\big\{\|\nabla g_t(x)\|\;\big|\;(t,x)\in\Hat{T}\times\mathbb{B}_{\epsilon}(\ox)\big\}\le\ell\;\mbox{ and} \end{equation*}
\begin{equation*} \|g_t(x)-g_t(u)\|\le\ell\|x-u\|\;\mbox{ whenever}\;x,u\in\mathbb{B}_{\epsilon}(\ox),\;t\in\Hat{T} \end{equation*} for some $\ell>0$. Suppose without loss of generality that
$\epsilon\ell<\eta$. Now taking $t\in\Hat{T}$, $x\in\mathbb{B}_{\epsilon/2}(\ox)$, $y\in\Phi_t(x)$, and $x^\ast\in D^\ast\Phi_t(x,y)$, we observe that $g_t(x)\in\mathbb{B}_{\eta/2}(g_t(\ox))$.
Applying Lemma~\ref{lemma:locLip} to the mapping $G:=F_t$ at the point $g(\ox)$ gives us the coderivatives estimate \eqref{est+}, which ensures by the coderivative criterion \eqref{cod-cr} that
$F_t$ is Lipschitz-like around $(g_t(x),y)$. Then applying the coderivative chain rule for deterministic multifunctions from \cite[Theorem~3.11(iii)]{m18} to the composition in \eqref{compos}
yields the representation \begin{equation*} x^\ast=\nabla g_t(x)^\ast\circ z^\ast\;\mbox{ with some }\;z^\ast\in D^\ast F_t\big(g(x),y\big)(y^\ast)\;\mbox{ and }\;y^*\in\Y. \end{equation*}
Employing again by the coderivative estimate \eqref{est+} leads us to the inequality \begin{equation*} \|z^\ast\|\le\frac{4\kappa(t)}{\eta}\|y^\ast\|, \end{equation*} where we use that
$\|y(t)\|\le\kappa(t)$ due to the choice of $y\in\Phi_t(x)$. This tells us that \begin{equation*} \| x^\ast\|=\|\nabla g_t(x)^\ast\circ z^\ast\|\le\|\nabla
g_t(x)^\ast)\|\cdot\|z^\ast\|\le\frac{4\ell\kappa(t)}{\eta}\|y^\ast\|. \end{equation*} Since the obtained estimate holds for all $t\in\Hat{T}$, $x\in \mathbb{B}_{\epsilon/2}(\ox)$, and
$y\in\Phi_t(x)$, we arrive at the coderivative condition \eqref{Int_Lips_like_inq_nonatomic}, which ensures by Theorem~\ref{Prop47} that $\Phi$ is integrably locally Lipschitzian around $\ox$.
$\h$\vspace*{0.05in}

Next we introduce a new property of random multifunctions that plays a crucial role in deriving composite coderivative Leibniz rules in what follows.\vspace*{-0.05in} \begin{definition}[\bf
integrably amenable multifunctions]\label{amen} A set-valued normal integrand $\Phi\colon T\times\X\to\Z$ defined on a complete finite measure space $(T,\mathcal{A},\mu)$ is {\em integrably
amenable} at $(\ox,\oy)\in\gph\Intfset{\Phi}$ if there exist a neighborhood $U$ of $\ox$, a set-valued convex normal integrand $F\colon T\times\Z\tto\Y$, and a function $g\colon T\times U\to\Z$,
which is measurable with respect to $t$ on $T$ and continuously differentiable with respect to $x$ around $\ox$, such that the composite representation \eqref{compos} holds, and that the
following {\em qualification condition} is satisfied for a.e.\ $t\in T$: \begin{equation}\label{qc} D^\ast F_t\big(g_t(\ox),\ooy(t)\big)(0)\cap\operatorname{Ker}\nabla g_t(\ox)^*=\{0\}\;\mbox{
whenever }\;\ooy\in{\cal S}_\Phi(\ox,\oy). \end{equation} \end{definition}\vspace*{-0.05in}

It follows from Definition~\ref{int-lipl}(ii) that the qualification condition \eqref{qc} fulfills automatically if the outer mapping $F$ in \eqref{compos} is {\em integrably quasi-Lipschitzian}
around $(\ox,\ooy)$ for each $\ooy\in{\cal S}_F(\ox,\oy)$. Recall that Theorem~\ref{Prop4700} ensures the fulfillment of \eqref{qc} when $F$ is integrably Lipschitz-like around $(\ox,\ooy)$ for
each $\ooy\in{\cal S}(\ox,\oy)$, provided that $(T,\mu,\mathcal{A})$ is a purely atomic measure space consisting of countable disjoint family of atoms. In the alternative setting of nonatomic
spaces, we deduce from Theorem~\ref{nonatomic:charact} that the integrable quasi-Lipschitzian property of $F$ to ensure \eqref{qc} can be equivalently replaced but its locally Lipschitzian
counterpart. On the other hand, observe from \eqref{qc} that this condition is satisfied, independently of the Lipschitzian properties of $F$, if the Jacobian $\nabla g_t(\ox)$ is of {\em full
rank} for a.e.\ $t\in T$.\vspace*{0.05in}

Now we are ready to derive the pointwise coderivative Leibniz rules for both regular and limiting coderivatives of expected-integral multifunctions with amenable set-valued
integrands.\vspace*{-0.05in} \begin{theorem}[\bf coderivative Leibniz rules for integral multifunctions with amenable integrands]\label{theoremamenablelike} Let $\Phi\colon T\times\X\tto\Y$ be an
amenable multifunction at $(\ox,\oy)\in\gph\Intfset{\Phi}$, which is assumed to be integrably bounded as in \eqref{convex_cond_set} with the uniformly bounded gradients $\nabla g_t(x)$ while
satisfying the integrability condition \eqref{a1} for the outer mapping $F$ in \eqref{compos}. Then for every $\ooy\in \mathcal{S}_\Phi(\ox,\oy)$ and every $y^\ast\in\Y$ the regular coderivative
upper estimate \begin{equation}\label{leib-comp} \Hat{D}^\ast\Intfset{\Phi}(\ox,\oy)(y^\ast)\subset\int_T\nabla g_t(\ox)^\ast\circ D^\ast F_t\big(g_t(\ox),\ooy(t)\big)(y^\ast)d\mu \end{equation}
holds. If in addition the multifunction $\mathcal{S}_\Phi$ is inner semicompact at $(\ox,\oy)$, then we have the following upper estimate of the limiting coderivative of $\Intfset{\Phi}$ at
$(\ox,\oy)$: \begin{equation}\label{secondinclusionformula} D^\ast\Intfset{\Phi}(\ox,\oy)(y^\ast)\subset\bigcup\limits_{\ooy\in\mathcal{S}_{\Phi}(\ox,\oy )}\int_T\nabla g_t(\ox)^\ast\circ{D}^\ast
F_t\big(g_t(\ox),\ooy(t)\big)(y^\ast)d\mu. \end{equation} \end{theorem}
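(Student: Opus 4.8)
The plan is to reduce the statement to the pointwise coderivative Leibniz rules already established in Theorems~\ref{Theo_lim_reg_Cod} and \ref{Theo_Basic_coder}, and then to insert the deterministic coderivative chain rule for the composition $\Phi_t=F_t\circ g_t$ under the integral sign. The first task is to verify that $\Phi$ is integrably locally Lipschitzian around $\ox$. Since amenability furnishes the composite representation \eqref{compos} with $F$ a convex normal integrand and $g_t$ continuously differentiable around $\ox$, and since the added hypotheses of integrable boundedness \eqref{convex_cond_set}, uniformly bounded gradients, and the integrability condition \eqref{a1} are exactly those of Theorem~\ref{lip-comp}, that theorem applies and yields the integrable local Lipschitzian property. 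As recorded in the opening of the proof of Theorem~\ref{nonatomic:charact} (via the characterizations in Theorem~\ref{Prop47} and Proposition~\ref{eq:regular:basic}), this property implies that $\Phi$ is integrably quasi-Lipschitzian around $(\ox,\ooy)$ for every $\ooy\in\mathcal{S}_\Phi(\ox,\oy)$, with no restriction on the underlying measure space.

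Fixing $\ooy\in\mathcal{S}_\Phi(\ox,\oy)$ and $y^\ast\in\Y$, I would next invoke Theorem~\ref{Theo_lim_reg_Cod} to obtain the regular estimate $\Hat D^\ast\Intfset{\Phi}(\ox,\oy)(y^\ast)\subset\int_T D^\ast\Phi_t(\ox,\ooy(t))(y^\ast)\,d\mu$. The decisive local step is the deterministic coderivative chain rule of \cite[Theorem~3.11(iii)]{m18}: because $g_t$ is $\mathcal{C}^1$ around $\ox$ and the qualification condition \eqref{qc} built into amenability holds for a.e.\ $t$, one has the pointwise inclusion $D^\ast\Phi_t(\ox,\ooy(t))(y^\ast)\subset\nabla g_t(\ox)^\ast D^\ast F_t(g_t(\ox),\ooy(t))(y^\ast)$ for a.e.\ $t\in T$. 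Since the Aumann integral \eqref{aum} is monotone with respect to a.e.\ inclusion of integrands (every integrable selection of the smaller multifunction selects the larger one), substituting this inclusion into the previous estimate gives \eqref{leib-comp}.

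For the limiting coderivative under the additional assumption that $\mathcal{S}_\Phi$ is inner semicompact at $(\ox,\oy)$, I would run the identical argument starting from Theorem~\ref{Theo_Basic_coder}, which delivers $D^\ast\Intfset{\Phi}(\ox,\oy)(y^\ast)\subset\bigcup_{\ooy\in\mathcal{S}_\Phi(\ox,\oy)}\int_T D^\ast\Phi_t(\ox,\ooy(t))(y^\ast)\,d\mu$, and then insert the same pointwise chain-rule inclusion inside each integral to arrive at \eqref{secondinclusionformula}.

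The step I expect to require the most care is the passage from the pointwise chain rule to the integral inclusion. Here one must confirm that both $t\mapsto D^\ast\Phi_t(\ox,\ooy(t))(y^\ast)$ and $t\mapsto\nabla g_t(\ox)^\ast D^\ast F_t(g_t(\ox),\ooy(t))(y^\ast)$ are measurable multifunctions, which follows from Proposition~\ref{lemma_measurability_reg_sub} together with the measurability of $g_t$ and $\nabla g_t$ in $t$, so that the two Aumann integrals are well defined and ordered by inclusion. One must also confirm that the qualification condition embedded in amenability, namely $D^\ast F_t(g_t(\ox),\ooy(t))(0)\cap\operatorname{Ker}\nabla g_t(\ox)^\ast=\{0\}$, is precisely the condition required by \cite[Theorem~3.11(iii)]{m18} to license the chain-rule inclusion for a.e.\ $t$, independently of any Lipschitzian behavior of the outer mapping $F$.
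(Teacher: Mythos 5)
Your proposal is correct and follows essentially the same route as the paper's own proof: Theorem~\ref{lip-comp} yields the integrable local Lipschitzian (hence quasi-Lipschitzian) property, the pointwise Leibniz rules of Section~\ref{sec4} are then applied, and the deterministic chain rule of \cite[Theorem~3.11(iii)]{m18} is inserted under the qualification condition \eqref{qc} of Definition~\ref{amen}. If anything, your citation for the regular estimate \eqref{leib-comp} is more precise than the paper's, which invokes Theorem~\ref{Theo_Basic_coder} (whose inner semicompactness hypothesis is not assumed in that part and which only bounds $\Hat D^\ast\Intfset{\Phi}$ via a union over $\ooy$), whereas Theorem~\ref{Theo_lim_reg_Cod}, as you use it, gives the stated fixed-$\ooy$ inclusion directly.
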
\vspace*{0.05in} {\bf Proof}. It follows from Theorem \ref{lip-comp} that $\Phi$ is integrably locally Lipschitzian around
$\ox$. Then we apply Theorem~\ref{Theo_Basic_coder} to observe that the coderivative Leibniz rule \eqref{leib-comp} holds with the integral of the set-valued mapping $D^\ast\Phi_t(\ox,\ooy(t))$
on the the right-hand side. Applying now the coderivative chain rule for deterministic multifunctions from \cite[Theorem~3.11(iii)]{m18} under the amenability assumption of Definition~\ref{amen},
we arrive at the claimed assertion \eqref{leib-comp}. Finally, the limiting coderivative inclusion \eqref{secondinclusionformula} is verified similarly to the above proof by involving the
corresponding arguments of Theorem~\ref{Theo_Basic_coder}. $\h$\vspace*{0.07in}

The concluding part of this section provides an efficient specification of the main Theorem~\ref{theoremamenablelike} for the case of {\em random inequality constraint systems} described by smooth
functions. In the deterministic framework, such constraint systems appear in nonlinear programming, while in the random setting under consideration they address parametric sets of feasible solutions in
problems of stochastic programming. Let us formalize this as follows. Given $\varphi^i\colon T\times\Z\times\Y\to\R$  a finite family of convex normal integrands with $i\in I$, assume that $\ph^i_t(z,y)$
are continuously differentiable with respect to $(z,y)$ around the reference points for a.e.\ $t\in T$ and then define the set-valued convex normal integrand $F\colon T\times\Z\tto\Y$ by
\begin{equation}\label{constraint:system}
F(t,z):=\big\{y\in\Y\;\big|\;\varphi^i_t(z,y)\le 0\;\text{ for all }\;i\in I\big\}.
\end{equation}

For brevity, we skip here establishing specifications of Theorem~\ref{lip-comp} on Lipschitz stability for the case of integrands \eqref{constraint:system}, while concentrating  on deriving
coderivative Leibniz rule for such systems. To proceed, let us introduce two {\em constraint qualification conditions}. The first qualification condition can be treated as a random uniform
version of the classical Slater constraint qualification in deterministic convex programming. The second one requires the solution triviality for a certain adjoint system generated by the
derivatives of $\ph^i_t$ and $g_t$ at
he reference points.\vspace*{-0.05in} \begin{definition}[\bf integrable constraint qualifications]\label{iqc} Consider the random constraint system
\eqref{constraint:system}.\\[0.5ex] {\bf(i)} Given a measurable mapping $\z\colon T\to\Z$, we say that the constraint system \eqref{constraint:system} satisfies the {\em integrable Slater
constraint qualification} at $\z$ if there is a number $\eta>0$ such that for all $h\in\eta\mathbb{B}$ there exists $\y\in\Leb^1(T,\Y)$ ensuring the strict inequality
\begin{equation}\label{integrable:slater} \varphi^i_t\big(\z(t)+h,\y(t)\big)<0\;\text{ whenever }\;i\in I\;\text{ for a.e. }\;t\in T. \end{equation} {\bf(ii)} Fix $x\in\X$, $t\in T$, and $y\in
F_t(g_t(x))$, and then define the {\em adjoint system} at $(t,x,y)$ by \begin{equation}\label{GE} (z^\ast,0)=\hspace{-0.3cm}\sum_{i\in
I_t(x,y)}\hspace{-0.3cm}\lambda_i\nabla\varphi^i_t\big(g_t(x),y\big),\;\lambda_i\ge 0,\;\nabla g_t(x)^\ast(z^\ast)=0, \end{equation} where $I_t(x,y):=\{i\in I\;|\;\varphi^i_t(g(x),y)=0\}$. We say
that the {\em integral triviality qualification condition} (ITQC) holds at $\ox$ if there exists $\eta>0$ such that for all $x\in \mathbb{B}_\eta(\ox)$, a.e.\ $t\in T$, and all $y\in F_t(g_t(x))$
the adjoint system \eqref{GE} admits only the trivial solution $z^\ast=0$. \end{definition}\vspace*{-0.05in}

The following consequence of Theorem~\ref{theoremamenablelike} establishes pointwise coderivative Leibniz rules for expected-integral multifunctions $\Intfset{\Phi}$ with composite integrands
\eqref{compos}, where $F$ is represented in the constraint form \eqref{constraint:system}. The obtained coderivative estimates are given entirely in terms of the {\em initial constraint data}.

\begin{corollary}[\bf coderivative Leibniz rules over random constraint systems]\label{rand-const} Let $\Phi\colon T\times\X\tto\Y$ be an integrably bounded normal integrand on a complete finite
measure space $(T,{\cal A},\mu)$ given in the composite form \eqref{compos} around $\ox$ with some fixed pair $(\ox,\oy)\in\gph\Intfset{\Phi}$, where $F\colon T\times\Z\tto\Y$ is the constraint
convex-graph multifunction defined in \eqref{constraint:system}, and where $g\colon T\times U\to\Z$ is measurable in $t$ and continuously differentiable in $x$ while satisfying
\begin{equation}\label{g-bound} \sup\big\{\|\nabla g_t(x)\|\;\big|\;(t,x)\in U\times T\big\}<\infty. \end{equation} Assume further that the ITQC holds at $\ox$, and that the integrable Slater
constraint qualification is satisfied at $\z:=g_t(\ox)$. Then for every $\ooy\in\mathcal{S}_\Phi(\ox,\oy)$ and every $y^\ast\in\Y$ we have the inclusion \begin{equation}\label{reg-const}
\Hat{D}^\ast\Intfset{\Phi}(\ox,\oy)(y^\ast)\subset\int_T\nabla g_t(\ox)^\ast\circ D^\ast F_t\big(g_t(\ox),\ooy(t)\big)(y^\ast)d\mu, \end{equation} where the limiting coderivative $D^\ast
F_t(g_t(\ox),\ooy(t))$ is computed by \begin{align}\label{formulacodF} D^\ast F_t\big(g_t(\ox),\ooy(t)\big)(y^\ast)=\left\{z^\ast\in\Z\;\Bigg|\;\begin{array}{c}
(z^\ast,-y^\ast)=\hspace{-0.4cm}\displaystyle\sum\limits_{i\in I_t(\ox,\oy(t))}\lambda_i\nabla\varphi^i_t\big(g_t(\ox),\oy(t)\big)\\\vspace{-0.3cm}\\ \text{ for some }\;\lambda_i\ge 0\;\text{
with }\;i\in I_t\big(\ox,\oy(t)\big) \end{array}\right\}. \end{align} If in addition the multifunction $\mathcal{S}_\Phi$ is inner semicompact at $(\ox,\oy)$, then we have
\begin{equation}\label{lim-const} D^\ast\Intfset{\Phi}(\ox,\oy)(y^\ast)\subset\bigcup\limits_{\ooy\in\mathcal{S}_{\Phi}(\ox,\oy)}\int_T\nabla g_t(\ox)^\ast\circ{D}^\ast
F_t\big(\ox,\ooy(t)\big)(y^\ast)d\mu, \end{equation} where $D^\ast F_t\big(\ox,\ooy(t))$ is computed in \eqref{formulacodF}. \end{corollary}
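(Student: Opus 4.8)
The plan is to obtain this corollary as a direct specialization of Theorem~\ref{theoremamenablelike}: the main tasks are (a) to compute the deterministic limiting coderivative $D^\ast F_t$ of the constraint mapping \eqref{constraint:system} via the KKT-type formula \eqref{formulacodF}, (b) to verify that $\Phi$ is integrably amenable at $(\ox,\oy)$ in the sense of Definition~\ref{amen}, and (c) to check the remaining boundedness and integrability hypotheses of Theorem~\ref{theoremamenablelike}.

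I would begin with step (a). For each fixed $t$ the graph $\gph F_t=\{(z,y)\;|\;\varphi^i_t(z,y)\le 0,\ i\in I\}$ is convex and closed, and evaluating the integrable Slater condition \eqref{integrable:slater} at $h=0$ produces, for a.e.\ $t\in T$, a point $\y(t)$ with $\varphi^i_t(g_t(\ox),\y(t))<0$ for all $i\in I$. This pointwise Slater condition is a constraint qualification guaranteeing that the limiting normal cone to the convex set $\gph F_t$ at $(g_t(\ox),\ooy(t))$ is generated by the active gradients $\nabla\varphi^i_t(g_t(\ox),\ooy(t))$ with nonnegative multipliers; applying the coderivative definition \eqref{lcod} together with the standard calculus of normal cones to smooth convex inequality systems (see, e.g., \cite{m06,m18,rw}) then yields exactly \eqref{formulacodF} for a.e.\ $t$.

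Next, for step (b), the composite representation \eqref{compos}, the measurability of $g$ in $t$, and its continuous differentiability in $x$ are all assumed, while $F$ is a convex normal integrand by construction, so the only nontrivial requirement of Definition~\ref{amen} is the qualification condition \eqref{qc}. Here I would read off \eqref{qc} from the ITQC of Definition~\ref{iqc}(ii): setting $y^\ast=0$ in \eqref{formulacodF} shows that $z^\ast\in D^\ast F_t(g_t(\ox),\ooy(t))(0)$ means $(z^\ast,0)=\sum_{i\in I_t(\ox,\ooy(t))}\lambda_i\nabla\varphi^i_t(g_t(\ox),\ooy(t))$ with $\lambda_i\ge 0$, and intersecting with $\operatorname{Ker}\nabla g_t(\ox)^\ast$ reproduces exactly the adjoint system \eqref{GE}; thus its only solution being $z^\ast=0$ is precisely \eqref{qc} for a.e.\ $t$.

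Finally, for step (c), integrable boundedness is assumed, the gradients $\nabla g_t(x)$ are uniformly bounded by \eqref{g-bound}, and the integrability condition \eqref{a1} follows from \eqref{integrable:slater}, since for each $h\in\eta\B$ the Slater selection $\y\in\Leb^1(T,\Y)$ satisfies $\y(t)\in F_t(g_t(\ox)+h)$ a.e., whence $\dist(0;F_t(g_t(\ox)+h))\le\|\y(t)\|$ is integrable on $T$. With all hypotheses in force, Theorem~\ref{theoremamenablelike} delivers the regular estimate \eqref{leib-comp} and, under inner semicompactness of $\mathcal{S}_\Phi$, the limiting estimate \eqref{secondinclusionformula}; substituting \eqref{formulacodF} for $D^\ast F_t$ in these two inclusions produces the claimed \eqref{reg-const} and \eqref{lim-const}. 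The main obstacle I anticipate is step (a): one must ensure that the pointwise Slater qualification holds on a common full-measure set of $t$ and that the resulting multiplier representation \eqref{formulacodF} is compatible with the measurable-selection arguments underlying the integral Leibniz rule, rather than being valid merely at individually fixed points.
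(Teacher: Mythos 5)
Your proposal is correct and follows essentially the same route as the paper's own proof: reduce to Theorem~\ref{theoremamenablelike} by verifying \eqref{a1} from the integrable Slater condition, obtaining \eqref{formulacodF} from the Slater-based constraint qualification (the paper invokes \cite[Corollary~4.35]{m06} via the Mangasarian--Fromovitz condition implied by Slater), and reading \eqref{qc} off the ITQC through the adjoint system \eqref{GE}. The only cosmetic difference is that the paper establishes the coderivative formula at all nearby points $(g_t(x),y)$ with $x\in\mathbb{B}_\eta(\ox)$ before specializing to $(\ox,\ooy(t))$, while you work directly at the reference point, which suffices since both \eqref{qc} and \eqref{formulacodF} are only needed there.
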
 {\bf Proof}. First we check that all the assumptions
of Theorem~\ref{theoremamenablelike} are fulfilled under the assumptions made in this corollary. Indeed, the required (local) uniform boundedness of the gradients $\nabla g_t(x)$ is formalized in
\eqref{g-bound}, while the imposed integrable Slater constraint qualification \eqref{integrable:slater} readily ensures the existence of $\eta_1>0$ for which condition \eqref{a1} holds whenever
$h\in\B_{\eta_1}(\ox)$. Further, choose $\eta_2>0$ such that for all $x\in\mathbb{B}_{\eta_2}(\ox)$, a.e.\ $t\in T$, and all $y\in F_t(g_t(x))$ we have that the adjoint system \eqref{GE} admits
only the trivial solution $z^\ast=0$. Denoting $\eta:=\min\{\eta_1/(\ell+1),\eta_2\}$ and fixing the triple $(x,y,t)$ from the above ensure that $g_t(x)-g_t(\ox)\in\eta_1 \mathbb{B}$. Employing
again the integrable Slater condition together with \cite[Corollary~4.35]{m06} (by taking into account that the Slater condition yields in this setting the Mangasarian-Fromovitz constraint
qualification assumed in \cite[Corollary~4.35(b)]{m06}), we arrive at the coderivative calculation \begin{align*}
 D^\ast F_t\big(g_t(x),y\big)(v^\ast)=\left\{z^\ast\in\Z\;\Bigg|\;\begin{array}{c}
(z^\ast,-v^\ast)=\hspace{-0.4cm}\displaystyle\sum\limits_{i \in I_t(x,y)}\lambda_i\nabla\varphi^i_t\big(g_t(x),y\big)\\\vspace{-0.3cm}\\ \text{ for some }\;\lambda_i\ge 0\;\text{ with }\;i\in
I_t(x,y) \end{array}\right\} \end{align*} whenever $v^*\in\Y$; this clearly implies \eqref{formulacodF}. It now follows from the ITQC that the qualification condition \eqref{qc} is satisfied, and
thus $\Phi$ is an integrably amenable multifunction at $(\ox,\oy)$. Applying the coderivative Leibniz rules from Theorem~\ref{theoremamenablelike} yields both inclusions in \eqref{reg-const} and
\eqref{lim-const}.$\h$\vspace*{0.05in}

To conclude this section, observe that the obtained basic coderivative Leibniz rules in \eqref{secondinclusionformula} and \eqref{lim-const} readily imply, similarly to the proof of
Corollary~\ref{eim-lip1} by using the coderivative criterion \eqref{cod-cr}, the {\em Lipschitz-like property} \eqref{aub} of expected-integral multifunctions $\Intfset{\Phi}$ with composite integrands
\eqref{compos}, as well as their specifications for random constraint systems \eqref{constraint:system}.\vspace*{-0.2in}

\section{Second-Order Subdifferentials of Expected-Integral Functionals}\label{sec6}\sce\vspace*{-0.1in}

In this section we   study of expected  functionals $\Intf{\ph}(x)$ of type \eqref{eif:expected} generated by extended-real-valued normal integrands $\ph\colon T\times\X\to\Rex$ on complete finite
measure spaces $(T,{\cal A},\mu)$. We refer the reader to \cite{chp19,chp192,chp20,mor-sag18} and the bibliographies therein for more results concerning first-order subdifferential calculus rules of
convex and nonconvex expected-integral functionals of type \eqref{eif:expected}. Our main focus here is on sequential and pointwise {\em second-order Leibniz rules} for $\Intf{\ph}(x)$ derived in terms
of both basic and combined second-order subdifferentials that are defined in \eqref{2nd} and \eqref{2nd1}, respectively. \vspace*{0.03in}

To simplify our analysis, we postulate that the {\em first-order} subdifferential Leibniz rule holds as an {\em equality}, which means that for the reference point $\ox\in\dom\Intf{\ph}$ there exists
$\rho>0$ such that
\begin{equation}\label{cond_reg_sub}
\partial\Intf{\ph}(x)=\int_T\partial\ph_t(x)d\mu\;\text{ whenever }\;x\in\mathbb{B}_\rho(\ox).
\end{equation}

The following proposition reveals some important cases where condition \eqref{cond_reg_sub} is satisfied.\vspace*{-0.05in} \begin{proposition}[\bf first-order subdifferential Leibniz rule as
equality]\label{suf_condition_to_cond_reg_sub} Let $\ph$ be a normal integrand on a complete finite measure space $(T,{\cal A},\mu)$, and let $\ox\in\dom\Intf{\ph}$. Then we have
\eqref{cond_reg_sub} provided that either the assumptions in {\rm(i)} or those in {\rm(ii)} hold:

\item[\bf(i)] $\ph_t$ is convex for almost all $t\in T$, and $\ox$ is an interior point of $\dom\Intf{f}$. \item[\bf(ii)] There exist $\Hat T\in\mathcal{A}$ with $\mu(T\backslash\Hat T)=0$,
$\kappa\in\Leb^1(T,\R_+)$ and a neighborhood $U$ of $\ox$ such that \begin{equation}\label{integrable_lipschitzlike} |\ph(t,u)-\ph(t,v)|\le\kappa(t)\|u-v\|\;\text{ for all }\;u,v\in U\;\text{ and
}\;t\in\Hat T, \end{equation} and for all $x\in U$ the function $\ph_t $ is lower regular at $x$ for a.e.\ $t\in T$.\\[1ex] Furthermore, in both these cases there exists $\rho>0$ such that
\begin{equation}\label{cond_reg_sub:extra} \Hat\partial\Intf{\ph}(x)=\partial\Intf{\ph}(x)=\int_T\Hat\partial\ph_t(x)d\mu=\int_T \partial\ph_t(x)d\mu\;\text{ for all }\;x\in\mathbb{B}_\rho(\ox).
\end{equation} \end{proposition}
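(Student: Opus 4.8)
The plan is to prove the stronger chain \eqref{cond_reg_sub:extra} and recover \eqref{cond_reg_sub} from it by simply discarding the regular subdifferentials. I would treat the two cases separately, since they rest on genuinely different mechanisms: case (i) is handled through convexity, whereas case (ii) requires a subderivative interchange argument on top of Proposition~\ref{firstorderestimation}.

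For case (i) I would argue almost entirely by convexity. Since $\ph_t$ is convex for a.e.\ $t$, the functional $\Intf{\ph}$ is convex, so that $\Hat\partial\Intf{\ph}(x)=\partial\Intf{\ph}(x)$, while likewise $\Hat\partial\ph_t(x)=\partial\ph_t(x)$ coincides with the convex subdifferential for a.e.\ $t$. It then remains only to invoke the classical convex integral subdifferentiation formula $\partial\Intf{\ph}(x)=\int_T\partial\ph_t(x)d\mu$ (see, e.g., \cite{chp19,mor-sag18}), valid on a ball $\mathbb{B}_\rho(\ox)$ about any interior point $\ox$ of $\dom\Intf{\ph}$. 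Chaining these identities delivers all four equalities in \eqref{cond_reg_sub:extra} at once, with no recourse to the Fatou machinery needed below.

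For case (ii) I would first note that \eqref{integrable_lipschitzlike} makes $\Intf{\ph}$ locally Lipschitz near $\ox$ with constant $\|\kappa\|_1$, and I would fix $\rho>0$ with $\mathbb{B}_\rho(\ox)\subset U$. Applying Proposition~\ref{firstorderestimation} at each $x\in\mathbb{B}_\rho(\ox)$ — legitimate since both the Lipschitz bound and lower regularity are assumed throughout $U$, and after replacing $\kappa$ by $\kappa+1$ to meet the strict positivity in that statement — yields $\partial\Intf{\ph}(x)=\int_T\partial\ph_t(x)d\mu$, which is \eqref{cond_reg_sub}; lower regularity identifies this further with $\int_T\Hat\partial\ph_t(x)d\mu$. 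The remaining task, and the crux of the argument, is the reverse regular inclusion $\int_T\Hat\partial\ph_t(x)d\mu\subset\Hat\partial\Intf{\ph}(x)$, which closes the loop $\int_T\Hat\partial\ph_t(x)d\mu\subset\Hat\partial\Intf{\ph}(x)\subset\partial\Intf{\ph}(x)=\int_T\partial\ph_t(x)d\mu=\int_T\Hat\partial\ph_t(x)d\mu$ and thereby forces equality throughout.

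To establish this reverse inclusion I would take $x^\ast=\int_T\x^\ast(t)d\mu$ with a measurable selection $\x^\ast(t)\in\Hat\partial\ph_t(x)$, graph measurability coming from Proposition~\ref{lemma_measurability_reg_sub} and integrability from the bound $\|\x^\ast(t)\|\le\kappa(t)$ valid for regular subgradients of a $\kappa(t)$-Lipschitz function. The subderivative description \eqref{rep_reg_sub} gives $\langle\x^\ast(t),w\rangle\le\der{\ph_t}{x}{w}$ for all $w\in\X$, and the key analytic step is the interchange inequality $\int_T\der{\ph_t}{x}{w}d\mu\le\der{\Intf{\ph}}{x}{w}$: choosing a sequence $(s_k,u_k)\to(0^+,w)$ that realizes the liminf defining $\der{\Intf{\ph}}{x}{w}$, the difference quotients $t\mapsto s_k^{-1}\big(\ph_t(x+s_ku_k)-\ph_t(x)\big)$ are dominated for large $k$ by $\kappa(t)(\|w\|+1)\in\Leb^1(T,\R_+)$, so Fatou's lemma combined with the pointwise bound $\liminf_k s_k^{-1}\big(\ph_t(x+s_ku_k)-\ph_t(x)\big)\ge\der{\ph_t}{x}{w}$ yields the claim (measurability of $t\mapsto\der{\ph_t}{x}{w}$ being inherited from the difference quotients). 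Integrating $\langle\x^\ast(t),w\rangle\le\der{\ph_t}{x}{w}$ over $T$ and chaining with this inequality gives $\langle x^\ast,w\rangle\le\der{\Intf{\ph}}{x}{w}$ for every $w$, whence $x^\ast\in\Hat\partial\Intf{\ph}(x)$ by \eqref{rep_reg_sub}. The main obstacle is exactly this interchange: the integrable Lipschitz domination is what licenses Fatou's lemma and makes subderivatives well behaved under integration, and it is the single ingredient that case (ii) has over the purely interior-point convex setting of case (i).
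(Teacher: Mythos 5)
Your proof is correct, and for \eqref{cond_reg_sub} itself it travels the same road as the paper: case (i) is settled by convexity plus the classical convex integral-subdifferential formula at interior points (the paper cites \cite{chp19,chp192} for exactly this), and case (ii) by applying Proposition~\ref{firstorderestimation} at each point of a small ball inside $U$. The genuine difference lies in how the four-term chain \eqref{cond_reg_sub:extra} is completed in case (ii). The paper's entire argument is the sentence that it ``follows from Proposition~\ref{firstorderestimation}''; since that proposition as stated involves only the limiting subdifferential $\partial\Intf{\ph}$, the equalities involving $\Hat\partial\Intf{\ph}(x)$ implicitly rest on the coderivative machinery underneath it, namely Corollary~\ref{cor_Basic_coder} and the equality case of Theorem~\ref{codL-eq} applied to $\Phi_t:=\ph_t$ viewed as a single-valued Lipschitz mapping, with lower regularity supplying the required coderivative regularity. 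You instead close the loop by a direct, self-contained proof of the one missing inclusion $\int_T\Hat\partial\ph_t(x)d\mu\subset\Hat\partial\Intf{\ph}(x)$, using the subderivative description \eqref{rep_reg_sub} and the Fatou-based interchange $\int_T\der{\ph_t}{x}{w}d\mu\le\der{\Intf{\ph}}{x}{w}$, which the integrable Lipschitz domination by $\kappa(t)(\|w\|+1)$ legitimizes; this is in effect an elementary scalar re-derivation of precisely the special case of Theorem~\ref{codL-eq} that is needed, and it makes the ``furthermore'' part of the proposition self-contained rather than deferred. What the paper's route buys is brevity and uniformity with the Section~\ref{sec4} results; what yours buys is independence from that machinery and an explicit verification of the regular-subdifferential equalities that the paper leaves to the reader. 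Two cosmetic points: replacing $\kappa$ by $\kappa+1$ is unnecessary, since the paper's $\R_+$ already denotes the strictly positive reals; and you should fix $\rho$ with $\mathbb{B}_{2\rho}(\ox)\subset U$ (rather than merely $\mathbb{B}_\rho(\ox)\subset U$) so that every $x\in\mathbb{B}_\rho(\ox)$ has a surrounding ball inside $U$ on which the hypotheses of Proposition~\ref{firstorderestimation} hold.
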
\vspace*{-0.05in} {\bf Proof}. The claimed result in case (i) with the formulas in \eqref{cond_reg_sub:extra} can be found, e.g., in \cite{chp19,chp192}. The
verification of \eqref{cond_reg_sub:extra}, and hence of \eqref{cond_reg_sub}, in case (ii) follows from Proposition~\ref{firstorderestimation}. $\h$\vspace*{0.05in}

To proceed further, we assume from now on that
\begin{equation}\label{cond_reg_sub0}
\begin{array}{ll}
\ph_t\;\text{ is lower regular at }\;x\in\mathbb{B}_\rho(\ox)\;\text{ and all }\;t\in T_{na},\\
\partial\ph_t(x)\subset\kappa(t)\mathbb{B}\;\text{ for all }\;x\in\mathbb{B}_\rho(\ox)\;\text{ and a.e. }\;t\in T,
\end{array}
\end{equation}
where a constant $\rho>0$ and an integrable function $\kappa\in\Leb^1(T,\R_+)$ are fixed in what follows. Similarly to \eqref{mapping:SPHI} consider the multifunction
$\mathcal{S}_\ph\colon\X\tto\Leb^1(T,\X)$ defined by
\begin{equation}\label{Sph}
\mathcal{S}_{\ph}(x,y):=\Big\{\y\in\Leb^1(T,\Y)\;\Big|\;\int_T\y(t)d\mu=y\;\text{ and }\;\y(t)\in\partial\ph_t(x)\;\text{ for a.e. }\;t\in T\Big\}.
\end{equation}

Let us start the derivation of second-order Leibniz rules with {\em sequential} results involving the {\em combined} second-order subdifferential mappings \eqref{2nd1}. These results are induced by
the corresponding sequential Leibniz rules for coderivatives obtained in Section~\ref{sec4}. Here are the two statements in this direction. For brevity we proof only the second one by taking into
account that the proof of the first proposition is pretty similar with the usage of Proposition~\ref{main_fuzzy_cod2} instead of Proposition~\ref{main_fuzzy_cod}.\vspace*{-0.05in}
\begin{proposition}[\bf sequential second-order subdifferential Leibniz rule, I]\label{2leib-1} Let $\ph\colon T\times\X\to\Rex$ be a normal integrand with $\ox\in\dom\Intf{\ph}$ and $\oy\in\partial
\Intf{\ph}(\ox)$. Suppose that $\ph$ satisfies conditions \eqref{cond_reg_sub} and \eqref{cond_reg_sub0} around $\ox$ and pick $\ooy\in\mathcal{S}_{\ph}(\ox,\oy)$ such that $\oy=\int_T\ooy(t)d\mu$ and
$\ooy(t)\in\partial\ph_t(\ox)$ for a.e.\ $t\in T$. Then for every $p,q\in(1,\infty)$ with $1/p+1/q=1$ there exist sequences $\{x_k\}\subset\X$, $\{\x_k\}\subset\Leb^p({T},\X)$, $\{\x_k^*\}
\subset{\Leb}^q({T},\X)$, $\{\y_k\}\subset\Leb^1(T,\X)$, and $\{\y_k^\ast\}\subset\Leb^\infty(T,\X)$ for which the following hold:\vspace*{-0.05in}
\begin{enumerate}[label=\alph*),ref=\alph*)]
\item[\bf(i)] $\x_k^*(t)\in \breve{\partial}^2\ph_t\big(\x_k(t),\y_k(t)\big)\big(\y_k^\ast(t)\big)$ for a.e.\ $t\in T$ and all $k\in\N$.
\item[\bf(ii)] $\|\ox-x_k\|\to 0$, $\|\ox -\x_k \|_p\to 0$, and $\|\ooy-\y_k\|_1\to 0$ as $k\to\infty$.
\item[\bf(iii)] $\|\y_k^\ast-\ooy^\ast\|_\infty\to 0$, $\|\x_k^*\|_q\|\x_k-x_k\|_p\to 0$, and $\disp\int_T x_k^*(t)d\mu\to\ox^\ast$ as $k\to\infty$.
\end{enumerate}
\end{proposition}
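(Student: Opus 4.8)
The plan is to \emph{reduce} this second-order statement to the sequential coderivative Leibniz rule of Proposition~\ref{main_fuzzy_cod2}, exploiting that the combined second-order subdifferential \eqref{2nd1} is by definition the \emph{regular coderivative} of the (limiting) subgradient mapping. Accordingly, I would introduce the set-valued integrand $\Phi\colon T\times\X\tto\X$ by $\Phi_t:=\partial\ph_t$ and start from an arbitrary $\ox^\ast\in\breve{\partial}^2\Intf{\ph}(\ox,\oy)(\oy^\ast)=\Hat D^\ast\big(\partial\Intf{\ph}\big)(\ox,\oy)(\oy^\ast)$, the aim being to produce the claimed sequences by invoking Proposition~\ref{main_fuzzy_cod2} for this $\Phi$ (here $\ooy^\ast$ denotes the constant function $t\mapsto\oy^\ast$, exactly as in the proof of Proposition~\ref{main_fuzzy_cod}).

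First I would check that $\Phi=\partial\ph$ meets the hypotheses of Proposition~\ref{main_fuzzy_cod2}. It is a set-valued normal integrand in the sense of Definition~\ref{norm-integ}: the graph measurability of $t\mapsto\gph\partial\ph_t$ is precisely Proposition~\ref{lemma_measurability_reg_sub}(ii), while the closedness of $\gph\partial\ph_t$ for each $t$ follows from the robustness of the limiting subdifferential recorded after \eqref{lnc}. The two conditions in \eqref{convex_cond_set} also transfer: the integrable boundedness $\partial\ph_t(x)\subset\kappa(t)\mathbb{B}$ on $\mathbb{B}_\rho(\ox)$ is exactly the second line of \eqref{cond_reg_sub0}, and the convexity of $\Phi_t(x)=\partial\ph_t(x)$ for $x\in\mathbb{B}_\rho(\ox)$ and a.e.\ $t\in T_{na}$ follows from the first line of \eqref{cond_reg_sub0}, since lower regularity gives $\partial\ph_t(x)=\Hat\partial\ph_t(x)$ and the regular subdifferential \eqref{rsub} is always convex.

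Next I would translate the data. The first-order equality \eqref{cond_reg_sub} yields $\partial\Intf{\ph}(x)=\Intfset{\Phi}(x)$ for all $x\in\mathbb{B}_\rho(\ox)$, so in particular $(\ox,\oy)\in\gph\Intfset{\Phi}$ and $\ooy\in\mathcal{S}_{\Phi}(\ox,\oy)$ in the notation of \eqref{mapping:SPHI}. Because the regular coderivative \eqref{rcod} is built from the regular normal cone \eqref{rnc}, which is a purely local construction, the two graphs coinciding over $\mathbb{B}_\rho(\ox)$ force equal regular coderivatives at the interior base point, whence $\ox^\ast\in\Hat D^\ast\Intfset{\Phi}(\ox,\oy)(\oy^\ast)$. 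Now Proposition~\ref{main_fuzzy_cod2} applies and delivers sequences $\{x_k\}\subset\X$, $\{\x_k\}\subset\Leb^p(T,\X)$, $\{\x_k^\ast\}\subset\Leb^q(T,\X)$, $\{\y_k\}\subset\Leb^1(T,\X)$, and $\{\y_k^\ast\}\subset\Leb^\infty(T,\X)$ obeying exactly assertions \textbf{(ii)} and \textbf{(iii)} together with the membership $\x_k^\ast(t)\in\Hat D^\ast\Phi_t\big(\x_k(t),\y_k(t)\big)\big(\y_k^\ast(t)\big)$. Rewriting $\Hat D^\ast\partial\ph_t=\breve{\partial}^2\ph_t$ via \eqref{2nd1} turns this into assertion \textbf{(i)}, completing the argument.

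The step I expect to require the most care is the \emph{localization and identification} encapsulated in the equality $\breve{\partial}^2\Intf{\ph}(\ox,\oy)(\oy^\ast)=\Hat D^\ast\Intfset{\Phi}(\ox,\oy)(\oy^\ast)$: one must justify that replacing $\partial\Intf{\ph}$ by $\Intfset{\partial\ph}$ is legitimate even though \eqref{cond_reg_sub} holds only on the ball $\mathbb{B}_\rho(\ox)$, and that the convexity hypothesis in \eqref{convex_cond_set} is genuinely supplied by lower regularity rather than by convexity of $\ph_t$ itself. Once these two bookkeeping points are secured, the remainder is a direct invocation of Proposition~\ref{main_fuzzy_cod2}.
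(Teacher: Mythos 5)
Your proposal is correct and follows exactly the paper's own route: the paper proves the companion Proposition~\ref{main_fuzzy_sec_ord} by setting $\Phi_t:=\partial\ph_t$, checking \eqref{convex_cond_set} via \eqref{cond_reg_sub0}, identifying $\partial\Intf{\ph}=\Intfset{\Phi}$ on $\mathbb{B}_\rho(\ox)$ via \eqref{cond_reg_sub}, invoking the sequential coderivative Leibniz rule, and rewriting $\Hat D^\ast\partial\ph_t=\breve{\partial}^2\ph_t$, noting that Proposition~\ref{2leib-1} follows the same way with Proposition~\ref{main_fuzzy_cod2} in place of Proposition~\ref{main_fuzzy_cod}. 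Your extra bookkeeping (graph measurability of $\partial\ph_t$, convexity of values on $T_{na}$ from lower regularity, and the locality of the regular normal cone justifying the identification of coderivatives at $(\ox,\oy)$) correctly fills in details the paper leaves implicit.
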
\vspace*{-0.15in}
\begin{proposition}[\bf sequential second-order subdifferential Leibniz rule, II]\label{main_fuzzy_sec_ord} In the general setting of Proposition~{\rm\ref{2leib-1}} $($without the specification of $p$
and $q$$)$, take any $\ox^\ast\in\breve{\partial}^2\Intfset{\ph}(\ox,\oy^\ast)(\oy^\ast)$. Then there exist sequences $\{x_k\}\subset\X$, $\{\x_k\}\subset\Leb^\infty({T},\X)$, $\{\x_k^\ast\}
\subset{\Leb}^1({T},\X)$, $\{\y_k\}\subset\Leb^1(T,\X)$, and $\{\y_k^\ast\}\subset\Leb^\infty(T,\X)$ satisfying the following conditions:
\begin{enumerate}[label=\alph*),ref=\alph*)]
\item[\bf(i)] $\x_k^*(t)\in\breve{\partial}^2\ph_t\big(\x_k(t),\y_k(t)\big)\big(\y_k^\ast(t)\big)$ for a.e.\ $t\in T$ and all $k\in\N$.
\item[\bf(ii)] $\|\ox-x_k\|\to 0$, $\|\ox-\x_k\|_\infty\to 0$, $\|\ooy-\y_k\|_1\to 0$, and $\|\y_k^\ast-\oy^\ast\|_\infty\to 0$ as $k\to\infty$.
\item[\bf(iii)]$\disp\int_T\|\x_k^*(t)\|\cdot\|\x_k(t)-x_k\|d\mu\to 0$ and $\disp\int_T\x_k^*(t)d\mu\to\ox^\ast$ as $k\to\infty$.
\end{enumerate}
\end{proposition}\vspace*{-0.05in}
{\bf Proof}. Define the set-valued normal integrand $\Phi\colon T\times\X\tto\X$ by $\Phi_t(x):=\partial\ph_t(x)$. It follows from \eqref{cond_reg_sub0} that $\Phi$ satisfies the conditions in
\eqref{convex_cond_set}. Furthermore, by \eqref{cond_reg_sub} we have that
\begin{equation*}
\partial\Intf{\ph}(x)=\Intfset{\Phi}(x)\;\text{ for all }\;x\in\mathbb{B}_\rho(\ox).
\end{equation*}
Applying now Proposition~\ref{main_fuzzy_cod} gives us sequences $\{x_k\}\subset\X$, $\{\x_k\}\subset\Leb^\infty({T},\X)$, $\{\x_k^\ast\}\subset{\Leb}^1({T},\X)$, $\{\y_k\}\subset\Leb^1(T,\Y)$,
and $\{\y_k^\ast\}\subset\Leb^\infty(T,\Y)$ satisfying the inclusions $\x_k^*(t)\in\Hat{D}^\ast\Phi_t(\x_k(t),\y_k(t))(\y_k^\ast(t))$ for a.e.\ $t\in T$ together with the convergence $\|\ox-x_k\|\to 0$,
$\|\ox-\x_k\|_\infty\to 0$, $\|\y_k^\ast-\oy^\ast\|_\infty \to 0$,
\begin{equation*}
\disp\int_T\|\ooy(t)-\y_k(t)\|d\mu\to 0,\;\int_T\|\x_k^*(t)\|\cdot\|\x_k(t)-x_k\|d\mu\to 0,\;\mbox{ and }\;\int_T\x_k^*(t)d\mu\to\ox^\ast\;\mbox{ as }\;k\to\infty.
\end{equation*}
Remembering that $\Hat{D}^\ast\Phi_t(\x_k(t),\y_k(t))(\y_k^\ast(t))=\breve{\partial}^2\ph_t(\x_k(t),\y_k(t))(\y_k^\ast(t))$, we conclude the proof.$\h$\vspace*{0.05in}

Our major goal in this section is to obtain {\em pointwise} second-order subdifferential Leibniz rules. To proceed, let us first specify the integrable quasi-Lipschitzian property of random
multifunctions from Definition~\ref{int-lipl}(ii) for the case of {\em basic} second-order subdifferential mappings \eqref{2nd}. Given a normal integrand $\ph\colon T\times\X\to\Y$, pick
$\ox\in\dom\Intf{\ph}$, $\oy\in\partial\Intf{\ph}(\ox)$, and $\ooy\in \Leb^1(T,\Y)$ with $\oy=\int_T\ooy (t)d\mu$ and $\ooy(t)\in\partial\ph_t(\ox)$ for a.e.\ $t\in T$. We say that $\ph$ enjoys
the {\em second-order integrable quasi-Lipschitzian property} around $(\ox,\ooy)$ if there exist $\eta>0$ and $\ell\in\Leb^1(T,\R_+)$ such that \begin{equation}\label{second_order_
Int_Lips_like_inq} \sup\big\{\|x^\ast\|\;\big|\;x^\ast\in\partial^2\ph_t\big(\x(t),\y(t)\big)\big(\y^\ast(t)\big)\big\}\le\ell(t)\|\y^\ast(t)\|\;\text{ for a.e. }\;t\in T \end{equation} whenever
$\x\in\mathbb{B}_\eta(\ox)$, $\y\in\mathbb{B}_\eta(\ooy)\cap\partial\ph(x)$, and $\y^\ast\in\Leb^\infty(T,\Y)$ with \begin{equation*}
\mathbb{B}_\eta(\ooy)\cap\partial\ph(x):=\big\{\y\in\Leb^1(T,\Y)\;\big|\;\y\in\mathbb{B}_\eta(\ooy)\;\text{ and }\;\y(t)\in\partial\ph_t(x)\;\text{ for a.e. }\;t\in T\big\}. \end{equation*}

The first theorem provides a pointwise Leibniz-type estimate of the {\em combined} second-order subdifferential of $\ph$ in terms of the basic second-order subdifferential of the integrand.
\vspace*{-0.05in}
\begin{theorem}[\bf pointwise Leibniz-type estimate of the combined second-order subdifferential]\label{2leib-comb} Let $\ph\colon T\times\X\to\Rex$ be a normal integrand defined on a complete finite
measure space $(T,{\cal A},\mu)$ with $\ox\in\dom\Intf{\ph}$ and $\oy\in\partial\Intf{\ph}(\ox)$. Suppose that $\ph$ satisfies conditions \eqref{cond_reg_sub} and \eqref{cond_reg_sub0} and enjoys the
second-order integrable quasi-Lipschitzian property \eqref{second_order_ Int_Lips_like_inq} around $(\ox,\ooy)$. Then for every $y^\ast\in\Y$ we have
\begin{equation}\label{equation_second_order}
\breve{\partial}^2\Intf{\ph}(\ox,\oy)(y^\ast)\subset\int_T{\partial}^2\ph_t\big(\ox,\ooy(t)\big)(y^\ast)d\mu.
\end{equation}
\end{theorem}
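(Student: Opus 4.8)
The plan is to reduce the assertion to the already-established pointwise regular coderivative Leibniz rule of Theorem~\ref{Theo_lim_reg_Cod} by passing to the subgradient mapping. First I would introduce the set-valued integrand $\Phi\colon T\times\X\tto\X$ defined by $\Phi_t(x):=\subm\ph_t(x)$, which is a genuine set-valued normal integrand in the sense of Definition~\ref{norm-integ} thanks to the graph measurability of $t\mapsto\gph\subm\ph_t$ guaranteed by Proposition~\ref{lemma_measurability_reg_sub}(ii). The point of this substitution is that, by the very definitions \eqref{2nd} and \eqref{2nd1} of the two second-order subdifferentials, the left-hand side of \eqref{equation_second_order} is exactly $\Hat D^\ast(\subm\Intf{\ph})(\ox,\oy)(y^\ast)$, while the integrand on the right-hand side equals $\partial^2\ph_t(\ox,\ooy(t))(y^\ast)=D^\ast\Phi_t(\ox,\ooy(t))(y^\ast)$. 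Thus the claimed estimate is precisely a regular coderivative Leibniz inclusion for the mapping $\Intfset{\Phi}$.

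The second step is to verify that all hypotheses of Theorem~\ref{Theo_lim_reg_Cod} hold for this $\Phi$. The equality assumption \eqref{cond_reg_sub} gives $\subm\Intf{\ph}(x)=\Intfset{\Phi}(x)$ for all $x\in\mathbb{B}_\rho(\ox)$, so the two mappings agree on a neighborhood of $\ox$; in particular $\ox\in\dom\Intfset{\Phi}$ and $\ooy\in\mathcal{S}_\Phi(\ox,\oy)$ from \eqref{mapping:SPHI}. Since the regular coderivative is a purely local construction, the coincidence of the graphs near $(\ox,\oy)$ yields $\Hat D^\ast(\subm\Intf{\ph})(\ox,\oy)(y^\ast)=\Hat D^\ast\Intfset{\Phi}(\ox,\oy)(y^\ast)$. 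To check \eqref{convex_cond_set}, the integrable boundedness $\Phi_t(x)\subset\kappa(t)\mathbb{B}$ is exactly the second line of \eqref{cond_reg_sub0}, and the required convexity of $\Phi_t(x)=\subm\ph_t(x)$ for a.e.\ $t\in T_{na}$ follows from the lower regularity postulated in the first line of \eqref{cond_reg_sub0}: lower regularity means $\subm\ph_t(x)=\subf\ph_t(x)$, and the regular subdifferential is always a convex set. Finally, the second-order integrable quasi-Lipschitzian property assumed around $(\ox,\ooy)$ is literally the integrable quasi-Lipschitzian property \eqref{Int_Lips_like_inq} of $\Phi$, because $\partial^2\ph_t=D^\ast\Phi_t$ and $\subm\ph(x)=\Phi(x)$ near $\ox$.

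With these verifications in place, the last step is a direct application of Theorem~\ref{Theo_lim_reg_Cod} to $\Phi=\subm\ph$, whose conclusion \eqref{reg-point} reads $\Hat D^\ast\Intfset{\Phi}(\ox,\oy)(y^\ast)\subset\int_T D^\ast\Phi_t(\ox,\ooy(t))(y^\ast)d\mu$; rewriting both sides through \eqref{2nd} and \eqref{2nd1} gives exactly \eqref{equation_second_order}. I expect no genuine analytic obstacle in this argument, since the heavy lifting — the Fatou-type limiting passage underlying the regular coderivative estimate — is already carried out inside Theorem~\ref{Theo_lim_reg_Cod}. The only point that requires real care is the bookkeeping of the hypotheses, and in particular the observation that lower regularity of $\ph_t$ is precisely what converts the abstract convexity requirement in \eqref{convex_cond_set} into a usable property of the subgradient sets $\subm\ph_t(x)$.
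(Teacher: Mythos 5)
Your proposal is correct and follows essentially the same route as the paper's own proof: define $\Phi_t(x):=\partial\ph_t(x)$, observe that \eqref{cond_reg_sub} and \eqref{cond_reg_sub0} yield $\partial\Intf{\ph}=\Intfset{\Phi}$ near $\ox$ together with the conditions in \eqref{convex_cond_set}, note that \eqref{second_order_ Int_Lips_like_inq} is exactly the integrable quasi-Lipschitzian property of $\Phi$, and apply Theorem~\ref{Theo_lim_reg_Cod}. Your write-up is in fact more detailed than the paper's (e.g., the measurability via Proposition~\ref{lemma_measurability_reg_sub}(ii), the locality of the regular coderivative, and the point that lower regularity makes $\partial\ph_t(x)$ convex), but the underlying argument is identical.
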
\vspace*{-0.05in}
{\bf Proof}. Consider again the set-valued normal integrand $\Phi\colon T\times\X\tto\X$ given by $\Phi_t(x):=\partial\ph_t(x)$. Observe that by \eqref{cond_reg_sub0} and \eqref{cond_reg_sub} the
multifunction $\Phi$ satisfies the conditions in \eqref{convex_cond_set} and $\partial\Intf{\ph}=\Intfset{\Phi}$, respectively, around $\ox$.  Furthermore, it follows from
\eqref{second_order_ Int_Lips_like_inq} that $\Phi$ enjoys the integrable quasi-Lipschitzian property around $(\ox,\ooy)$. Applying Theorem~\ref{Theo_lim_reg_Cod}, we verify the claimed inclusion
\eqref{equation_second_order}. $\h$\vspace*{0.05in}

The next theorem is the main result of this section. It establishes the pointwise second-order subdifferential Leibniz rule for the robust {\em basic} second-order construction \eqref{2nd}.
\vspace*{-0.05in} \begin{theorem}[\bf basic second-order subdifferential Leibniz rule]\label{Theo_secordsub} In the setting of Theorem~{\rm\ref{2leib-comb}}, assume in addition that the mapping
${\cal S}_\ph$ from \eqref{Sph} is inner semicompact at $(\ox,\oy)$. Then we have \begin{equation}\label{2leib-bas} \partial^2\Intf{\ph}(\ox,\oy)(y^\ast)\subset\bigcup\limits_{\ooy\in
S_{\ph}(\ox,\oy)}\int_T\partial^2\ph_t\big(\ox,\ooy(t)\big)(y^\ast)d\mu. \end{equation} \end{theorem}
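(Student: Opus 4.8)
The plan is to reduce the second-order statement to the limiting coderivative Leibniz rule of Theorem~\ref{Theo_Basic_coder}, exactly mirroring the reduction used for the combined construction in Theorem~\ref{2leib-comb} but now invoking the \emph{basic} (limiting) rule rather than the regular one. First I would introduce the set-valued normal integrand $\Phi\colon T\times\X\tto\X$ defined by $\Phi_t(x):=\partial\ph_t(x)$. Its status as a set-valued normal integrand in the sense of Definition~\ref{norm-integ} follows from Proposition~\ref{lemma_measurability_reg_sub}(ii), which gives graph measurability of $t\mapsto\gph\partial\ph_t$, together with the robustness of the limiting subdifferential, which guarantees that each $\gph\Phi_t$ is closed.

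Next I would check that $\Phi$ inherits all the hypotheses required to apply Theorem~\ref{Theo_Basic_coder}. The conditions in \eqref{convex_cond_set} hold for $\Phi$: the lower regularity of $\ph_t$ on $T_{na}$ postulated in \eqref{cond_reg_sub0} yields $\Phi_t(x)=\partial\ph_t(x)=\Hat\partial\ph_t(x)$, which is convex by the representation \eqref{rep_reg_sub}, while the second line of \eqref{cond_reg_sub0} is precisely the integrable boundedness $\Phi_t(x)\subset\kappa(t)\mathbb{B}$. From \eqref{cond_reg_sub} I obtain the local identity $\partial\Intf{\ph}(x)=\Intfset{\Phi}(x)$ for all $x\in\mathbb{B}_\rho(\ox)$, so in particular $\oy\in\Intfset{\Phi}(\ox)$, and comparing \eqref{Sph} with \eqref{mapping:SPHI} shows ${\cal S}_\ph={\cal S}_\Phi$; hence the inner semicompactness of ${\cal S}_\ph$ at $(\ox,\oy)$ is the inner semicompactness of ${\cal S}_\Phi$ needed by the theorem. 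Finally, since $D^\ast\Phi_t=D^\ast\partial\ph_t=\partial^2\ph_t$ by \eqref{2nd}, the second-order integrable quasi-Lipschitzian property \eqref{second_order_ Int_Lips_like_inq} assumed around $(\ox,\ooy)$ for each $\ooy\in{\cal S}_\ph(\ox,\oy)$ is literally the integrable quasi-Lipschitzian property \eqref{Int_Lips_like_inq} for $\Phi$.

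With these checks in hand, applying Theorem~\ref{Theo_Basic_coder} to $\Phi$ produces
\begin{equation*}
D^\ast\Intfset{\Phi}(\ox,\oy)(y^\ast)\subset\bigcup_{\ooy\in{\cal S}_\Phi(\ox,\oy)}\int_T D^\ast\Phi_t\big(\ox,\ooy(t)\big)(y^\ast)d\mu\quad\text{for all }\;y^\ast\in\X.
\end{equation*}
Translating back through \eqref{2nd}, the left-hand side equals $D^\ast\partial\Intf{\ph}(\ox,\oy)(y^\ast)=\partial^2\Intf{\ph}(\ox,\oy)(y^\ast)$ --- here I use that the limiting coderivative depends only on the germ of the graph at the reference point, so the identity $\partial\Intf{\ph}=\Intfset{\Phi}$ on $\mathbb{B}_\rho(\ox)$ forces $D^\ast\partial\Intf{\ph}(\ox,\oy)=D^\ast\Intfset{\Phi}(\ox,\oy)$ --- while $D^\ast\Phi_t(\ox,\ooy(t))(y^\ast)=\partial^2\ph_t(\ox,\ooy(t))(y^\ast)$ and ${\cal S}_\Phi={\cal S}_\ph$. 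This delivers exactly \eqref{2leib-bas}.

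The main obstacle I anticipate is the bookkeeping around the quantifier on $\ooy$: Theorem~\ref{Theo_Basic_coder} requires the integrable quasi-Lipschitzian property for \emph{every} $\ooy\in{\cal S}_\Phi(\ox,\oy)$, since the inner-semicompactness step in its proof produces a limiting selection $\ooy$ whose identity is not known in advance. Consequently the standing second-order quasi-Lipschitzian hypothesis must be read as holding uniformly over all $\ooy\in{\cal S}_\ph(\ox,\oy)$, and this uniformity is what transfers to $\Phi$. The only other point needing care is the locality justification that $D^\ast\partial\Intf{\ph}(\ox,\oy)=D^\ast\Intfset{\Phi}(\ox,\oy)$ despite \eqref{cond_reg_sub} furnishing the identity only on $\mathbb{B}_\rho(\ox)$; this is immediate because the graphs of the two mappings coincide in a neighborhood of $(\ox,\oy)$.
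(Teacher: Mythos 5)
Your proposal is correct and takes essentially the same route as the paper's own proof: define the set-valued normal integrand $\Phi_t(x):=\partial\ph_t(x)$, check via \eqref{cond_reg_sub} and \eqref{cond_reg_sub0} that $\Phi$ satisfies \eqref{convex_cond_set}, that $\partial\Intf{\ph}=\Intfset{\Phi}$ near $\ox$ with ${\cal S}_\Phi={\cal S}_\ph$, and that the second-order quasi-Lipschitzian hypothesis (read, as you note, for all $\ooy\in{\cal S}_\ph(\ox,\oy)$) is the integrable quasi-Lipschitzian property of $\Phi$, then apply Theorem~\ref{Theo_Basic_coder} and translate back through \eqref{2nd}. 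The paper performs exactly this reduction, only more tersely by citing the checks already made in the proof of Theorem~\ref{2leib-comb}, while you additionally spell out the measurability, convexity, and coderivative-locality details it leaves implicit.
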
\vspace*{-0.1in} {\bf Proof}. Having $\Phi\colon T\times\X\tto\X$ with
$\Phi_t(x):=\partial\ph_t(x)$, observe as in the proof of Theorem~\ref{2leib-comb} that $\Phi$ satisfies the conditions in \eqref{convex_cond_set} and $\partial\Intf{\ph}=\Intfset{\Phi}$ around
$\ox$, and that $\Phi$ enjoys the integrable quasi-Lipschitzian property around $(\ox,\ooy)$ for all $\ooy\in\mathcal{S}_\Phi(\ox,\oy)= \mathcal{S}_\ph(\ox,\oy)$. Since $\mathcal{S}\ph$ is inner
semicompact at $(\ox,\oy)$, we get that $\mathcal{S}_\Phi$ is also inner semicompact at this point. Applying Theorem~\ref{Theo_Basic_coder} tells us that \begin{align*}
\partial^2\Intf{\ph}(\ox,\oy)(y^\ast)&={D}^\ast\Intfset{\Phi}(\ox,\oy)(y^\ast)\subset\bigcup\limits_{\ooy\in\mathcal{S}_{\Phi}(\ox,\oy)}\int_T{D}^\ast\Phi_t\big(\ox,\ooy(t)\big)(y^\ast)d\mu\\&=
\bigcup\limits_{\ooy\in
S_{\ph}(\ox,\oy)}\int_T\partial^2\ph_t\big(\ox,\ooy(t)\big)(y^\ast)d\mu, \end{align*} which verifies \eqref{2leib-bas} and thus completes the proof of this theorem. $\h$\vspace*{0.05in}

The final result of this section provides a second-order subdifferential Leibniz rule for normal integrands in \eqref{eif}, which are represented in the form of {\em maximum functions}
\begin{equation}\label{max} \ph(t,x):=\max\big\{\psi_i(t,x)\;\big|\;i=1,\ldots,s\big\}\;\mbox{ for }\;t\in T\;\mbox{ and }\;x\in\X. \end{equation} The second-order Leibniz rule, which is obtained
in the next statement, evaluates the (basic) second-order subdifferential of $\Intf{\ph}$ with $\ph$ from \eqref{max} in terms of the second-order subdifferential of $\ph_t$. The latter
construction is constructively {\em calculated} in \cite{eh,ms} entirely via the given functions $\psi_i$ for various types of maximum functions. For brevity we do not present the precise
formulas here while referring the reader to the aforementioned papers. \begin{corollary}[\bf second-order subdifferential Leibniz rule with normal integrands as maximum functions]\label{2max} Let
$\psi_i\colon T\times\X\to\Rex$, $i=1,\ldots,s$, be a family of normal integrand functions on a complete finite measure space $(T,{\cal A},\mu)$, and let $U$ be an open subset of $\X$ where the
functions $\psi_i$ satisfy the following assumptions: \begin{enumerate}[label=\alph*)] \item[\bf(i)] When $x\in U$ and $i=1,\ldots,s$, the functions $\psi^i_t$ are continuously differentiable
around $x$ for a.e.\ $t\in T$. \item[\bf(ii)] There exists $\ell\in\Leb^1(T,\R_+)$ such that for all $i=1,\dots,s$, all $u_1,u_2\in U $, and a.e.\ $t\in T$ we have \begin{equation*}
|\psi_i(t,u_1)-\psi_i(t,u_2)|+\|\nabla_x\psi_i(t,u_1)-\nabla_x\psi_i(t,u_2)\|\le\ell(t)\|u_1-u_2\|. \end{equation*} \end{enumerate} Consider the maximum function \eqref{max} and assume that for
some $\ox\in U$ and $\oy\in\partial\Intf{\ph}(\ox)$ the multifunction $\mathcal{S}_f$ is inner semicompact at $(\ox,\oy)$ and that $\ph$ enjoys the second-order integrable quasi-Lipschitzian
property \eqref{second_order_ Int_Lips_like_inq} around $(\ox,\ooy)$. Then for all $y^\ast\in\X$ we have the second-order subdifferential Leibniz rule \eqref{2leib-bas}, where the second-order
subdifferentials $\partial^2\varphi_t$, $t\in T$, are computed in {\rm\cite{eh,ms}}. \end{corollary}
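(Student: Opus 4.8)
The plan is to obtain the assertion directly from the basic second-order Leibniz rule of Theorem~\ref{Theo_secordsub} by verifying that the maximum integrand $\ph$ from \eqref{max} satisfies all of its hypotheses; the only items needing argument are that $\ph$ is a normal integrand, that the first-order equality \eqref{cond_reg_sub} holds, and that the standing conditions \eqref{cond_reg_sub0} are met, since the inner semicompactness of $\mathcal{S}_\ph$ and the second-order integrable quasi-Lipschitzian property are already assumed in the statement.

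First I would check that $\ph$ is a proper normal integrand. For each $t$ the function $\ph_t=\max_{1\le i\le s}\psi_{i,t}$ is continuous near $\ox$ by assumption (i), hence lower semicontinuous, and finite on $U$; moreover $\epi\ph_t=\bigcap_{i=1}^{s}\epi\psi_{i,t}$, so the measurability of $t\mapsto\epi\ph_t$ follows from that of the integrands $\psi_i$ together with the stability of measurable closed-valued multifunctions under finite intersections. By the characterization recalled in Section~\ref{sec2a}, $\ph$ is then a normal integrand.

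Next I would produce \eqref{cond_reg_sub} via Proposition~\ref{suf_condition_to_cond_reg_sub}(ii). Its Lipschitz hypothesis \eqref{integrable_lipschitzlike} is immediate from (ii) and the elementary inequality $|\max_i a_i-\max_i b_i|\le\max_i|a_i-b_i|$, which yields $|\ph(t,u)-\ph(t,v)|\le\ell(t)\|u-v\|$ for all $u,v\in U$ and a.e.\ $t\in T$. Its regularity hypothesis reduces to the classical fact that a maximum of finitely many $\mathcal{C}^1$ functions is lower regular, so that $\subf\ph_t(x)=\subm\ph_t(x)$ at every $x\in U$ for a.e.\ $t$. Proposition~\ref{suf_condition_to_cond_reg_sub}(ii) then gives \eqref{cond_reg_sub} on some ball $\mathbb{B}_\rho(\ox)\subset U$. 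The very same regularity fact supplies the lower-regularity clause of \eqref{cond_reg_sub0} on $T_{na}$, while the $\ell(t)$-Lipschitz continuity of $\ph_t$ forces $\partial\ph_t(x)\subset\ell(t)\mathbb{B}$ for $x\in\mathbb{B}_\rho(\ox)$; thus \eqref{cond_reg_sub0} holds with $\kappa:=\ell$.

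With all hypotheses of Theorem~\ref{Theo_secordsub} now in force, that theorem delivers \eqref{2leib-bas}. To complete the statement I would observe that, by (i)--(ii), each integrand $\ph_t$ is exactly a maximum of $\mathcal{C}^1$ functions with locally Lipschitzian gradients, which is the precise framework in which $\partial^2\ph_t(\ox,\ooy(t))$ is computed in \cite{eh,ms}; inserting those explicit formulas into \eqref{2leib-bas} finishes the argument. I expect the main obstacle to be regularity bookkeeping rather than estimation: one must be careful that the max-of-$\mathcal{C}^1$ structure genuinely yields lower regularity at every point used (so that both Proposition~\ref{suf_condition_to_cond_reg_sub}(ii) and the $T_{na}$-clause of \eqref{cond_reg_sub0} apply), and that assumptions (i)--(ii) match verbatim the differentiability and Lipschitz-gradient hypotheses under which \cite{eh,ms} establish their second-order subdifferential formulas.
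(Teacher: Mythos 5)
Your proposal is correct and takes essentially the same route as the paper: both arguments verify \eqref{cond_reg_sub0} and \eqref{cond_reg_sub} from the max-function structure (the paper via the formula $\partial\ph_t(x)=\co\big\{\nabla_x\psi_i(t,x)\;\big|\;\psi_i(t,x)=\ph_t(x)\big\}$ of \cite[Theorem~4.10]{m18}, you via lower regularity of finite maxima of ${\cal C}^1$ functions plus the elementary Lipschitz estimate, which amounts to the same fact) together with Proposition~\ref{suf_condition_to_cond_reg_sub}(ii), and then apply Theorem~\ref{Theo_secordsub} under the assumed inner semicompactness of the selection mapping and the assumed second-order integrable quasi-Lipschitzian property, citing \cite{eh,ms} for the computation of $\partial^2\ph_t$. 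The only divergence is cosmetic: the paper adds an appeal to Theorem~\ref{Prop47} and Proposition~\ref{eq:regular:basic} to re-identify the hypothesized second-order quasi-Lipschitzian condition with the property required in Theorem~\ref{Theo_secordsub}, a step that is logically superfluous (as you observe, the two conditions coincide by definition since $\partial^2\ph_t=D^\ast\partial\ph_t$), while you additionally spell out the normal-integrand verification for $\ph$, which the paper leaves implicit.
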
\vspace*{-0.05in} {\bf Proof}. It is well known (see, e.g., \cite[Theorem~4.10]{m18}) that
\begin{equation}\label{sub:max}
 \partial\ph_t(x)={\rm co}\big\{\nabla_x\psi_i(t,x)\;\big|\;\psi_i(t,x)=\ph(t,x)\big\}\;\mbox{ for all }\;x\in U.
\end{equation} Thus $\ph_t$ satisfies the conditions \eqref{cond_reg_sub0} around $\ox$. It follows from Proposition~\ref{suf_condition_to_cond_reg_sub}(ii) that \eqref{cond_reg_sub} holds at
every $x\in U$. Consequently, we get the equality \begin{equation*} \partial\Intf{\ph}(x)=\Intf{\Phi}(x)\;\text{ for all }\;x\in U\;\mbox{ with }\;\Phi(t,u):=\partial\ph_t(u). \end{equation*}
Employing finally  Theorem~\ref{Prop47} and Proposition~\ref{eq:regular:basic} tells us that the maximum function $\ph$ satisfies the second-order integrable quasi-Lipschitzian assumption
required in Theorem~\ref{Theo_secordsub}. Thus applying Theorem~\ref{Theo_secordsub} we arrive at \eqref{2leib-bas} and complete the proof of the corollary. $\h$\vspace*{-0.1in}

\section{Concluding Remarks}\label{sec7}\sce\vspace*{-0.1in}

The paper establishes general results on first-order and second-order generalized differentiation of random set-valued and single-valued mappings. We also introduce new Lipschitzian properties of
such multifunctions and derive their generalized differential characterizations. The obtained results provide the foundation for broad applications of variational analysis and generalized
differentiation to stochastic optimization and related topics at the same level of perfection as for deterministic counterparts. Among our future research topics, we mention applications to
first-order and second-order optimality conditions in stochastic programming, sensitivity analysis in parametric stochastic optimization, tilt and full stability of random optimal solutions,
stochastic variational inequalities, and stochastic numerical algorithms. Classes of problems in stochastic optimization of our special interest include two-stage stochastic programs,
probabilistic programs, stochastic bilevel programs, etc. We are positive that the theory developed in this paper will be highly instrumental in applications to such classes of stochastic
problems. \\[1ex] {\bf Acknowledgements}. The authors thank the Handling Associate Editor and two anonymous referees for their useful suggestions and remarks, which helped us to improve the
original presentation.\vspace*{-0.15in}

 \end{document}